\newcommand{\verticalsubseteq}{\mathrel{\text{\rotatebox[origin=c]{90}{$\subseteq$}}}}
\newtheorem{theorem}{Theorem}[section]
\newtheorem{proposition}[theorem]{Proposition}
\newtheorem{lemma}[theorem]{Lemma}
\newtheorem{corollary}[theorem]{Corollary}
\theoremstyle{definition} 
\newtheorem{example}[theorem]{Example}
\newtheorem{definition}[theorem]{Definition}
\newtheorem{remark}[theorem]{Remark}
\newcommand{\m}{\mathbbm{m}}
\newcommand{\id}{\mathrm{id}}
\newcommand{\ho}{\mathrm{Hom}}
\newcommand{\qa}{kQ/I}
\newcommand{\rad}{\mathrm{rad}}
\newcommand{\soc}{\mathrm{soc}}
\begin{document}
	
	\title[Quasi-biserial algebras]{
		Quasi-biserial algebras, special quasi-biserial algebras and symmetric fractional Brauer graph algebras}

	\author{Bohan Xing} \address{(Bohan Xing)
		School of Mathematical Sciences,
		Laboratory of Mathematics and Complex Systems,
		Beijing Normal University,
		Beijing 100875,
		P.R.China}
	\email{bhxing@mail.bnu.edu.cn}

	\date{\today}

	\begin{abstract}
	Biserial algebras are a classical class in the representation theory of algebras, generalizing Nakayama algebras. They were further generalized by Green and Schroll to multiserial algebras, which share many structural properties with biserial algebras. Inspired by their motivation, we introduce another generalization, called quasi-biserial algebras. We show that this class retains fundamental properties of classical biserial algebras. In the symmetric special case, we establish a correspondence with labeled ribbon graphs equipped with multiplicities, providing a combinatorial model for the algebras. Furthermore, we prove that Kauer moves on these graphs, interpreted as mutations of labeled ribbon graphs, induce derived equivalences between the associated symmetric special quasi-biserial algebras.
	\end{abstract}
	
	\renewcommand{\thefootnote}{\alph{footnote}}
\setcounter{footnote}{-1} \footnote{\it{Mathematics Subject
		Classification(2020)}: 16D50, 16D40, 16G20.}
\renewcommand{\thefootnote}{\alph{footnote}}
\setcounter{footnote}{-1} \footnote{\it{Keywords}: Quasi-biserial algebra; Special quasi-biserial alegbra; Labeled ribbon graph; Symmetric fractional Brauer graph algebra; Kauer move.}
	\maketitle
	

	\allowdisplaybreaks
	
	\section*{Introduction}
	
	Biserial algebras form a natural generalization of Nakayama algebras, first introduced in \cite{Fuller}. An algebra $A$ is called biserial if the radical of any indecomposable projective $A$-module is a sum of two uniserial submodules whose intersection is either zero or simple. This class includes blocks of group algebras with cyclic defect group \cite{Jan,Ku}, generalized tilted algebras of type $A_n$ \cite{AH}, and algebras whose Auslander-Reiten sequences have at most two nonprojective summands in their middle term \cite{AR}.
	
	As shown in \cite{SW}, representation-finite biserial algebras are special biserial, meaning that they are bound quiver algebras $\qa$ with at most two incoming (resp.\ outgoing) arrows per vertex in $Q$ and some `nice' relations in $I$. Special biserial algebras play a significant role in the modular representation theory of finite groups. For example, every representation-finite block and certain tame blocks (only occurs in characteristic $2$) of group algebras are special biserial \cite{DF,Jan,Rin}. Moreover in complex representation theory of the Lorentz group, the so-called Harish-Chandra modules are defined over special biserial algebras \cite{GP}.
	Since special biserial algebras are tame \cite{WW}, their indecomposable modules admit a complete combinatorial classification via string and band techniques \cite{GP,Rin}. Furthermore, \cite{Sch} demonstrates that every symmetric special biserial algebra is uniquely determined by its combinatorial data, specifically, a ribbon graph equipped with a multiplicity function (commonly referred to as a Brauer graph).
	
	In \cite{GS2}, the authors introduce a generalization of biserial algebras called multiserial algebras, where the radical of each indecomposable projective module may decompose into a sum of more than two uniserial submodules. They further extend this notion to special multiserial algebras by relaxing the quiver restrictions imposed in the special biserial case. In fact, although special multiserial algebras are wild in general, all indecomposable modules of these algebras are multiserial \cite{GS1}. Furthermore, \cite{GS2} also establishes that every symmetric special multiserial algebra is completely characterized by combinatorial data, namely, a Brauer configuration.
	
	In this paper, we introduce and study a new generalization of biserial algebras, distinct from multiserial algebras, which we term quasi-biserial algebras. Specifically, an algebra $A$ is called quasi-biserial if for every indecomposable projective $A$-module $P$, there exist a positive integer $m$ such that $P/\rad^m(P)$ is uniserial and $\rad^m(P)$ is a sum of two uniserial submodules. We further introduce a distinguished subclass of quasi-biserial algebras, called special quasi-biserial algebras, which are defined as bound quiver algebras $\qa$ satisfying specific conditions. 
	
	We further investigate the relationships between these classes of algebras in  Proposition \ref{prop:mult-cap-qbi} and Proposition \ref{prop:special-mult-cap-qbi},  whose main results can be summarized as follows:
	$$\begin{matrix}
	\textbf{\{Multiserial algebras\}}&\cap&\textbf{\{Quasi-biserial algebras\}}&=&\textbf{\{Biserial algebras\}}\\
	\verticalsubseteq&&\verticalsubseteq&&\verticalsubseteq\\
\textbf{\{Special multiserial algebras\}}&\cap&\textbf{\{Special quasi-biserial algebras\}}&=&\textbf{\{Special biserial algebras\}}
\end{matrix}$$

	We also show that special quasi-biserial algebras share properties analogous to those of special biserial algebras. For example, we have the following result.

\begin{theorem}\textnormal{(see Theorem \ref{ssqba quotient})}
Every special quasi-biserial algebra $A=\qa$ is a quotient of a symmetric special quasi-biserial algebra.
\end{theorem}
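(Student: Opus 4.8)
The statement is the natural analogue, for special quasi-biserial algebras, of the classical fact that every special biserial algebra is a quotient of a symmetric special biserial algebra. The standard strategy in the special biserial case (going back to the construction behind Brauer graph algebras) is: take the bound quiver $(Q,I)$ of $A$, and for each arrow $\alpha$ that is not "used up" by the relations — i.e.\ each arrow which does not already start a maximal path dictated by the special biserial conditions — glue on extra arrows/relations so that every arrow lies on a cyclic path, and these cyclic paths become the relations forcing a symmetric structure. I would imitate this, working with the combinatorial data attached to a special quasi-biserial algebra (the labeled ribbon graph with multiplicity function referenced in the abstract and, presumably, constructed in the sections preceding this theorem).

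First I would recall the precise definition of a special quasi-biserial algebra $A=\qa$: the defining conditions on $(Q,I)$ — the bound on incoming/outgoing arrows at each vertex, the shape of the admissible paths, and the form of the relations in $I$. Using these, I would define an enlarged quiver $\widehat{Q}\supseteq Q$ by adding, for each vertex, whatever arrows are needed to complete the local configuration to the "full" one appearing in a symmetric special quasi-biserial algebra, together with an ideal $\widehat{I}\subseteq k\widehat{Q}$ generated by: (a) the relations needed to make $\widehat{A}=k\widehat{Q}/\widehat{I}$ special quasi-biserial (monomial/commutativity relations of the prescribed type), and (b) the "cycle relations" $c_v^{m(v)} - c_w^{m(w)}$ and $c_v^{m(v)+1}$, along each vertex, that are characteristic of the symmetric case and that force a symmetrizing form. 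I would then verify that $\widehat{A}$ is finite-dimensional, that it is symmetric (by exhibiting the symmetrizing linear form on the socle, using the cyclic paths), and that it is special quasi-biserial (by checking the defining conditions locally — here the fact, assumed from the excerpt, that the symmetric special quasi-biserial algebras are exactly those given by labeled ribbon graphs with multiplicities lets me simply realize $\widehat{A}$ as such an algebra). Finally I would exhibit an explicit surjective algebra homomorphism $\widehat{A}\twoheadrightarrow A$ sending the added arrows to $0$ and the old arrows to themselves, and check its kernel is an ideal, i.e.\ that $\widehat{I}$ maps into $I$.

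The main obstacle I expect is the completion step (b): one must add exactly the right arrows and relations so that the result is simultaneously (i) still special quasi-biserial — the quiver-degree restrictions must not be violated when new arrows are attached — and (ii) genuinely symmetric, which in the quasi-biserial setting is more delicate than in the biserial one because the uniserial pieces only appear after passing to $\rad^m(P)$, so the cyclic paths controlling the socle are longer and their interaction with the permitted relations must be tracked carefully. Concretely, the issue is to show that after completion every arrow occurs in a unique maximal "direct" path and a unique maximal "inverse" path whose composites agree up to the multiplicity-twisted cycle relation; this is where I would lean on the classification of symmetric special quasi-biserial algebras via labeled ribbon graphs, choosing the ribbon graph so that $A$ appears as the quotient by the ideal generated by the added arrows. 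A secondary, purely bookkeeping, difficulty is checking admissibility/finite-dimensionality of $\widehat{I}$, but this should follow from finite-dimensionality of $A$ together with the explicit form of the added relations.
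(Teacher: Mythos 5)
The skeleton you propose (enlarge the quiver so that every maximal nonzero path closes up into a cycle, impose cycle-type relations, define the symmetrizing form as the indicator of the socle, and surject onto $A$ by killing the new arrows) is indeed the strategy of the paper, which implements it via the machinery of generalized tracks (Lemma \ref{mark track}): the maximal nonzero paths of $A$, glued together by a star product that identifies overlapping segments consisting of single arrows, with a new arrow added only when a glued path fails to be closed, followed by an induction that makes all tracks closed. Measured against this, your plan has two concrete gaps rather than being a complete alternative argument.

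First, the relations you prescribe at each vertex, $c_v^{\m(v)}-c_w^{\m(w)}$ and $c_v^{\m(v)+1}$, are the Brauer-graph relations and are not the right ones in the quasi-biserial setting. Two cycles through the same vertex of the completed quiver may share an entire segment of single arrows (this is exactly the labeled-edge, i.e.\ fractional, phenomenon; see Example \ref{example sf-BGA}), and the symmetric algebra requires the truncated commutativity relation obtained by stripping the common segments -- this is relation (4) in the ideal $I_s$ of the paper's proof, respectively relation (i) in the definition of sf Brauer graph algebras. If one imposes only the full cycle relations, the difference of the two stripped paths is a nonzero element annihilated by all arrows on both sides; it spans an ideal on which your socle-indicator form vanishes, so the completed algebra is not symmetric. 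Relatedly, your target structure ``every arrow occurs in a unique maximal direct path'' fails for single arrows, which may lie on two such paths. Second, the heart of the proof -- deciding which maximal paths to glue and in what order, adding arrows when a glued path is not closed, and checking at each step that conditions (1)--(2) of Definition \ref{sqb} are preserved and that the new algebra still surjects onto $A$ -- is exactly what you defer to ``leaning on the classification via labeled ribbon graphs.'' That classification (proved later in the paper, and not used in the paper's proof of Theorem \ref{ssqba quotient}) describes what symmetric special quasi-biserial algebras look like, but it does not produce, for a given non-symmetric $A$, an sf Brauer graph whose algebra maps onto $A$; constructing that datum is the content of the theorem. A minor further point: the kernel of the surjection is not the ideal generated by the added arrows alone -- additional zero relations among old arrows must be added, as in Example \ref{exa:As-surj-A}.
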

\noindent The proof differs from the biserial \cite{WW} and multiserial \cite{GS3} settings, and relies on a generalization of the classical notion of tracks for quivers with relations introduced in Definition \ref{def:track}.
	
	Furthermore, we prove that every symmetric special quasi-biserial algebra admits a symmetric fractional (abbr. sf) Brauer graph algebra structure, as defined in \cite{LL}. Consequently, such algebras are completely classified by combinatorial data, namely, a labeled ribbon graph paired with a multiplicity function (or equivalently, an sf Brauer graph).
	
	\begin{theorem}\textnormal{(see Theorem \ref{ssqb is sf-BGA})}
		Let $A=\qa$ be a finite dimensional $k$-algebra with $k$ algebraically closed. Then $A$ is a symmetric special quasi-biserial algebra if and only if it is an sf Brauer graph algebra.
	\end{theorem}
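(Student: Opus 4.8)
The plan is to set up a dictionary between the bound-quiver presentation of a symmetric special quasi-biserial algebra and the combinatorial data (a labeled ribbon graph $\Gamma$ together with a multiplicity function $\m$) underlying an sf Brauer graph algebra in the sense of \cite{LL}, and then to check that travelling along this dictionary in either direction recovers the original algebra up to isomorphism.

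For the ``if'' direction I would start from an sf Brauer graph algebra $B = A(\Gamma,\m)$ and unwind its definition: its quiver $Q_B$ has one vertex per edge of $\Gamma$ and one arrow per pair consisting of a half-edge and its successor under the cyclic ordering at the incident vertex, while the ideal is generated by the fractional Brauer relations together with the zero/commutativity relations prescribed by the labeling. That $B$ is symmetric is built into the construction in \cite{LL}, so the real content is to verify the defining conditions of a special quasi-biserial algebra. One checks directly on $Q_B$ that at each vertex the incoming (resp.\ outgoing) arrows fall into at most two $f$-sequences, and that for each indecomposable projective $P$ there is an explicit integer $m$, read off from the $f$-labeling and the values of $\m$, for which $\rad^m(P)$ is the sum of the two uniserial submodules spanned by the two maximal paths through $P$, with intersection equal to $\soc(P)$, hence simple or zero. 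This is a finite verification on the combinatorics of paths around a vertex of $\Gamma$.

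For the ``only if'' direction I would take a symmetric special quasi-biserial $A = \qa$ and reconstruct $(\Gamma,\m)$. Using the defining relations one organizes the arrows of $Q$ into maximal sequences $\alpha_1,\dots,\alpha_r$ with $\alpha_{i+1}\alpha_i \notin I$; the symmetry hypothesis -- equivalently, one-dimensionality of the socles of the projectives -- forces these sequences to close up into cyclic permutations, and these cycles become the cyclic orderings of half-edges at the vertices of $\Gamma$, the edges of $\Gamma$ being indexed by the vertices of $Q$. The multiplicity of a vertex of $\Gamma$ is determined by the length of the corresponding cyclic uniserial walk, detected via the symmetrizing form, and the labeling / function $f$ records the integer $m$ witnessing quasi-biseriality at each projective. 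One then defines a morphism $A \to A(\Gamma,\m)$ by sending each arrow of $Q$ to the corresponding arrow; the relations match by construction, and the remaining freedom -- a diagonal rescaling of the arrows -- is fixed using the symmetrizing form exactly as in Schroll's treatment of the special biserial case \cite{Sch} and in \cite{GS2}.

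The hard part will be the ``only if'' direction, for two reasons. First, well-definedness of the ribbon structure: one must show that allowing only some power $\rad^m(P)$, rather than $\rad(P)$ itself, to be biserial is still compatible with the arrows assembling into genuine cyclic permutations; here the symmetry hypothesis is indispensable, and I expect to argue via the one-dimensionality of $\soc(P)$ together with the pairing between $P$ and the top of its projective cover, in the spirit of \cite{GS2,Sch}. Second, producing an honest algebra isomorphism rather than merely matching quivers and relation shapes: the scalars attached to the fractional Brauer relations must be normalized, which requires a careful change of basis on the arrow spaces $e_j(\rad A/\rad^2 A)e_i$ controlled by the symmetrizing form. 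The ``if'' direction, and the compatibility with the class inclusions recorded in Proposition \ref{prop:mult-cap-qbi} and Proposition \ref{prop:special-mult-cap-qbi}, are comparatively routine once these normalizations are in place.
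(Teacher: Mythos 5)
Your proposal follows essentially the same route as the paper: the ``if'' direction is a direct verification of the quiver/relation conditions and symmetry via the socle form, and the ``only if'' direction reconstructs the labeled ribbon graph by closing the maximal nonzero paths into basic cycles using the symmetrizing form (edges of $\Gamma$ indexed by vertices of $Q$, multiplicities read off the socle powers, labels at the vertices where two cycles share a single arrow), normalizes scalars by a diagonal rescaling of arrows exactly as in Schroll's argument (this is where algebraic closedness enters), and concludes by matching the path bases of the indecomposable projectives. The paper packages these steps as its basic-cycle construction, Proposition \ref{prop:quiver-relations} and Lemma \ref{k-basis of proj}, so your plan is a faithful outline of the same proof.
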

		
	We give a necessary condition for symmetric special quasi-biserial algebras to be of finite representation type, formulated in terms of their combinatorial structure, in analogy with the classical special biserial case.
		
		\begin{proposition}\textnormal{(see Corollary \ref{LBTA is RFS})}
				Let $A$ be an sf Brauer graph algebra associated with the sf Brauer graph $(\Gamma_L,\m)$, where $\Gamma$ is a ribbon graph, $L$ is a set of labeled edges in $\Gamma$ and $\m$ is a multiplicity function. If $A$ is of finite representation type, then $(\Gamma\backslash L,\m)$ is a Brauer tree.
		\end{proposition}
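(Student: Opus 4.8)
The plan is to deduce the statement from the classical classification of representation-finite Brauer graph algebras: the Brauer graph algebra attached to a connected Brauer graph $G$ is of finite representation type if and only if $G$ is a Brauer tree, that is, $G$ is a tree whose multiplicity function equals $1$ at every vertex with at most one exception (see, e.g., \cite{Sch}). By Theorem \ref{ssqb is sf-BGA} we may present $A=\qa$ as a symmetric special quasi-biserial algebra whose quiver $Q$ carries one vertex, with primitive idempotent $e_i$, for each edge $i$ of $\Gamma$. Since an algebra is of finite representation type precisely when each of its connected components is, we may assume $A$ connected, hence $\Gamma$ connected.

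The heart of the argument is to show that deleting the labeled edges corresponds, at the level of algebras, to an operation that does not increase the representation type. I would extract from the structure theory of sf Brauer graph algebras in \cite{LL}, together with the explicit description of $A$ built earlier in the paper, that for the idempotent $e=\sum_{i\in E(\Gamma\backslash L)}e_i$ the corner algebra $eAe$ is isomorphic to the ordinary (non-fractional) Brauer graph algebra $B$ of $(\Gamma\backslash L,\m)$ --- the fractional data being supported on the labeled edges, so that once these are removed the cyclic orderings and multiplicities at the remaining vertices are inherited unchanged. Granting this, $A$ representation-finite forces $B$ representation-finite: every indecomposable $eAe$-module has the form $eM$ for an indecomposable $A$-module $M$ (take $M=Ae\otimes_{eAe}N$, note $eM\cong N$, and decompose $M$), so there are only finitely many of them. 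The classical classification recalled above then yields that $(\Gamma\backslash L,\m)$ is a Brauer tree.

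The main obstacle is the identification $eAe\cong B$: one must make the reduction $(\Gamma_L,\m)\rightsquigarrow(\Gamma\backslash L,\m)$ precise and check it is compatible with the Brauer graph algebra structure --- in particular that deleting the labeled edges (exactly the edges at which $A$ can fail to be special biserial) leaves the multiplicity function on $\Gamma\backslash L$ untouched, that the resulting corner algebra is again symmetric, and that $\Gamma\backslash L$ stays connected so that ``Brauer tree'' rather than ``Brauer forest'' is the right conclusion (if connectivity may fail, the statement is read componentwise). As a consistency check, and as an alternative route avoiding the corner algebra, one can argue directly with the string combinatorics of special quasi-biserial algebras developed in the paper: a cycle among the unlabeled edges, or a nontrivial value of $\m$ at two distinct vertices of $\Gamma\backslash L$, already produces an infinite family of pairwise non-isomorphic indecomposable $A$-modules, by the usual cycle/multiplicity obstruction for Brauer graph algebras. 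Once the reduction (or the direct construction) is carried out, the remaining ingredients are the two standard facts used above and require no further work.
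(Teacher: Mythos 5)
Your proposal follows essentially the same route as the paper: the key identification $eAe\cong B$ for $e=\sum_{\bar h\in E(\Gamma\backslash L)}e_{\bar h}$ is exactly the paper's Proposition \ref{prop:eAe}, after which the paper invokes Bongartz's result that finite representation type passes to corner algebras (where you instead give the standard direct argument via $Ae\otimes_{eAe}N$, which is fine) and concludes with the classification of representation-finite Brauer graph algebras as Brauer tree algebras (Theorem \ref{thm:BTA-RFS}). So the proposal is correct and matches the paper's proof, with the corner-algebra isomorphism you flag as the "main obstacle" being precisely the lemma the paper proves separately beforehand.
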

\noindent Unfortunately, the converse of the proposition above does not hold. We give a counterexample via the universal cover of an algebra in Example~\ref{exa:non-rep-finite-FBTA}.
		
	We also show that Kauer moves on labeled ribbon graphs induce derived equivalences between the corresponding symmetric special quasi-biserial algebras.
		
		\begin{theorem}\textnormal{(see Theorem \ref{Kauer move})}
			Let $A$ and $B$ be two sf Brauer graph algebras associated with sf Brauer graphs $(\Gamma^A_L,\m^A)$ and $(\Gamma^B_{L'},\m^B)$, respectively. Assume that the vertex sets of $\Gamma^A$ and $\Gamma^B$ coincide, and that $\m^A=\m^B$. If $\Gamma^A_L$ can be obtained from $\Gamma^B_{L'}$ via a finite sequence of Kauer moves, then $A$ and $B$ are derived equivalent.
		\end{theorem}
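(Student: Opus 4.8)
The plan is to reduce the general statement to a single Kauer move and then invoke the known derived equivalence for Brauer graph algebras by transporting it through the sf Brauer graph structure established in Theorem \ref{ssqb is sf-BGA}. Since derived equivalence is transitive, it suffices to prove that if $\Gamma^A_L$ is obtained from $\Gamma^B_{L'}$ by a \emph{single} Kauer move (at an edge, with a choice of side), then $A$ and $B$ are derived equivalent; the general case follows by composing the equivalences along the given finite sequence, using that each intermediate labeled ribbon graph with the common multiplicity function $\m$ again defines an sf Brauer graph algebra.

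For the single-move case, I would first recall the construction of the tilting complex realizing a Kauer move in the classical Brauer graph setting (Kauer's original paper, and the reformulation by Aihara--Zvonareva): one takes the two-term complex $T=\bigoplus_{i}T_i$ where $T_i=P_i$ for vertices $i$ not adjacent to the moved edge, and $T_i$ is the two-term complex $P_i \to P_j$ (with the map given by the relevant path) for the vertices $i$ involved in the move, and then shows $\operatorname{End}_{D^b(A)}(T)\cong B$. The first key step is therefore to check that this construction still makes sense for sf Brauer graph algebras: the edges of $\Gamma$ correspond to the indecomposable projectives, the cyclic orderings at the vertices (the ribbon structure) encode the uniserial "arms" of each $\rad^m(P)$, and the labeled edges $L$ together with $\m$ only affect the \emph{lengths} (via the fractional/multiplicity data) of these arms, not the combinatorics of which projectives map to which. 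So the same two-term complex $T$ is a tilting complex over $A$.

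The second, and I expect \emph{main}, step is to compute $\operatorname{End}_{D^b(A)}(T)$ and identify it with the sf Brauer graph algebra $B$ attached to $(\Gamma^B_{L'},\m^B)$. Here one must verify three things: (i) that $T$ generates $D^b(A)$ and has no self-extensions, i.e. it is genuinely a tilting complex — this is essentially the same homological computation as in the ordinary Brauer graph case, carried out using the explicit projective resolutions available for symmetric special quasi-biserial algebras (these are biserial-like, so $\rad^m(P)$ being a sum of two uniserials gives enough control); (ii) that the underlying ribbon graph of the endomorphism algebra is exactly $\Gamma^B$ with its cyclic orderings — this is a purely combinatorial matching of the Kauer move on ribbon graphs against the effect of the tilt on the "arms", and should go through verbatim from the classical picture since, as noted, the move does not touch $L$ or $\m$; (iii) that the multiplicity function and the set of labeled edges are preserved, i.e. the endomorphism algebra carries the data $(\Gamma^B_{L},\m^B)$ with $L$ the image of $L'$ under the move and $\m^B=\m^A=\m$ — this follows because the Kauer move is defined so as to fix the vertex set and the multiplicity function, and the labeled edges are carried along combinatorially. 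The delicate point in (ii)--(iii) is bookkeeping the uniserial arm lengths after the tilt in the fractional setting: when the moved edge is labeled, the "fractional" contribution at a vertex redistributes, and one must check this matches precisely the arm lengths prescribed by the sf Brauer graph $(\Gamma^B_L,\m)$; I would handle this by a direct length count at each vertex incident to the moved edge, comparing the socle series of the relevant $T_i$ before and after.

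Finally, I would assemble these pieces: Theorem \ref{ssqb is sf-BGA} guarantees that the endomorphism algebra $\operatorname{End}_{D^b(A)}(T)$, being again symmetric special quasi-biserial with the stated combinatorial data, is \emph{the} sf Brauer graph algebra $B$; Rickard's theorem then yields a derived equivalence $D^b(A)\simeq D^b(B)$. Composing over the sequence of Kauer moves completes the proof. The principal obstacle, as indicated, is step (ii)--(iii): making the combinatorial identification of the endomorphism algebra's sf Brauer graph fully rigorous in the presence of labeled edges and a nontrivial multiplicity function, where the classical arguments need the fractional arm-length computation to be redone carefully rather than merely cited.
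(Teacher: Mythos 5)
Your overall strategy coincides with the paper's: reduce to a single Kauer move by transitivity, take the two-term Okuyama/Kauer complex $T=T_s\oplus(\bigoplus_i P_i)$ with $T_s=(P_1\oplus P_2\rightarrow P_s)$ at the moved edge, and identify its endomorphism algebra with $B$. The differences are in how the two nontrivial steps are discharged. For tiltingness you propose to redo the homological verification via projective resolutions; the paper gets this for free from Okuyama's theorem (Theorem \ref{Okuyama}), which applies because sf Brauer graph algebras are symmetric (Proposition \ref{sf-BGA is ssqb}), so no resolution computation is needed. For the identification you propose to show $\mathrm{End}_{\mathscr{D}^b(A)}(T)$ is again symmetric special quasi-biserial with the prescribed combinatorial data and then invoke the classification Theorem \ref{ssqb is sf-BGA}; the paper instead avoids having to verify the special quasi-biserial property of the endomorphism algebra a priori by constructing an epimorphism $B\twoheadrightarrow B'=(\mathrm{End}_{\mathscr{K}^b(A)}(T))^{op}$ (matching quivers, checking the relations of $B$ vanish in $B'$) and then proving $\dim_k B=\dim_k B'$ by a case-by-case count of $\dim_k\ho_{\mathscr{K}^b(A)}(T_i,T_j)$ against $\dim_k e_iBe_j$ -- your ``direct length count'' is essentially this, and either route requires the same explicit Hom computations, so your plan works but is the heavier of the two at both steps. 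One point to correct: your flagged ``delicate point'' about the case ``when the moved edge is labeled'' cannot occur, since by definition a Kauer move on a labeled ribbon graph is only performed at an unlabeled edge $s$ (and only slides across unlabeled edges); the labels enter the argument elsewhere, namely when two edges lie on the same special cycle and are joined through arrows starting at labeled edges, where the paper's count gives $\dim_k(e_{i_2}Be_{i_1})=\m(w_1)+\m(w_2)-1$ rather than the classical value, and this is exactly the fractional bookkeeping your proposal defers.
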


		We note that this class of algebras intersects with several classes studied previously. Having established their definition and basic properties, it would be of interest to explore connections between quasi-biserial algebras and more concrete classes of algebras in future work. More concretely, we make the following observations.
	
	\begin{enumerate}[(1)]
		\item As biserial algebras often arise in the study of group algebras, quasi-biserial algebras also appear in this context; see Example~\ref{exa:group-algebra}. Moreover, such quasi-biserial projective modules occur frequently in group algebras; see, for instance, the appendix of \cite{Ben}.
		
		\item Certain distinguished subclasses of special biserial algebras (e.g.\ gentle algebras and Brauer graph algebras) are known to be closed under derived equivalence (see \cite{SZ} and \cite{AZ} respectively). In contrast, their natural analogues in the setting of special multiserial algebras (such as almost gentle algebras and Brauer configuration algebras) fail to exhibit this closure property. Indeed, concrete examples demonstrate that these algebras can be naturally derived equivalent to special quasi-biserial algebras (Example \ref{exa:almost-gentle-der} and \ref{RFS}). 
		
		In particular, we also note that special multiserial algebras can be derived equivalent to considerably more complicated algebras (see~\cite[Example 4.7]{LX}). Thus, the class of special multiserial algebras derived equivalent to special quasi-biserial algebras may form only an interesting subclass of the broader derived equivalence class.
	\end{enumerate}
	
	\medskip
	
	\textbf{Outline.} \;In Section \ref{sec:pre-1}, we establish preliminary results on (special) biserial algebras, Brauer graph algebras, and (special) multiserial algebras that will be used throughout the paper. Section \ref{sec:qb-sqb-2} introduces our generalization of (special) biserial algebras to quasi-biserial and special quasi-biserial algebras, demonstrating that they retain analogous properties to their classical counterparts. In Section \ref{sec:sf-BGA}, we recall  the definition of symmetric fractional Brauer graph algebras in terms of labeled ribbon graphs and prove several properties of them. Finally, Section \ref{sec:sym-sqb-4} proves that all symmetric special quasi-biserial algebras admit an sf Brauer graph algebra structure.

	\medskip
	\section*{Funding} This research is supported by China Scholarship Council (No. 202506040127).
			\section*{Acknowledgments}
	This paper is inspired by the work of Nengqun Li and Yuming Liu in \cite{LL}. I would like to thank Nengqun Li and Yuming Liu, for many discussions and helpful comments both on mathematics and on language of this paper.

	\bigskip

	\section{Preliminaries}\label{sec:pre-1}
	
	Let $k$ be a field and $Q$ be a finite quiver, with finite vertex set $Q_0$ and finite arrow set $Q_1$. For a path $p$ in $Q$, we denote by $s(p)$ the source vertex of $p$ and by $t(p)$ its target vertex. We will write paths from right to left, for example, $p=\alpha_{n}\alpha_{n-1}\cdots\alpha_{1}$ is a path with starting arrow $\alpha_{1}$ and ending arrow $\alpha_{n}$. A path $p$ is called a cycle (equally, a closed path) if $s(p)=t(p)$.
	The length of a path $p$ will be denoted by $\ell(p)$. For two paths $p$ and $q$, we write $p\mid q$ if $p$ is a subpath of $q$.
	
	A {\it bound quiver algebra} is a $k$-algebra $A=kQ/I$ where $Q$ is a finite quiver and $I$ is an admissible ideal in $kQ$, which means there exists $m\geq 2$ such that $\langle Q_1\rangle^m\subseteq I\subseteq \langle Q_1\rangle^2.$
	By abuse of notation we always view an element in $kQ$ as an element in $\qa$ if no confusion can arise. Unless explicitly stated otherwise, all modules considered in this paper are finitely generated left $A$-modules.
	
	In this paper, we study the indecomposable projective $A$-modules $P$ via their {\it Loewy structure}, which is represented by a diagram where the $i$-th row corresponds to the simple summands of the completely reducible module $\rad^{i-1}(A)P/\rad^{i}(A)P$ with $\rad(A)$ the radical of $A$. Each number in the diagram denotes a distinct simple module in $A$. For further details, see \cite[Page 174]{Ben}.

	\subsection{Biserial and special biserial algebras}
	\
	\smallskip
	
	For a finite dimensional $k$-algebra $A$, an indecomposable module $M$ is called {\it uniserial} if there is only one composition series for $M$. We recall the definition of biserial modules and biserial algebras in \cite{Fuller}.
	
		\begin{definition}
			Let $A$ be a finite dimensional $k$-algebra.
		\begin{enumerate}[(1)]
			\item An indecomposable left (resp.\ right) $A$-module $M$ is called {\it biserial} if it is not uniserial and there exists uniserial left (resp.\ right) modules $U$ and $V$ such that $\rad(M)=U+V$ and $U\cap V$ is either zero or simple.
			
			\item The $k$-algebra $A$ is {\it biserial} if for every indecomposable projective left or right module $P$, $P$ is either uniserial or biserial.
		\end{enumerate}
	\end{definition}
	
	Biserial algebras naturally arise in the modular representation theory of finite groups (see for example in \cite{E, Ben}).	
	In \cite{SW}, the authors prove that every representation-finite biserial algebra is special biserial. This implies that such algebras admit a presentation by a quiver with particularly well-behaved relations. We recall the precise definition below.
	
	\begin{definition}
			Let $A$ be a finite dimensional $k$-algebra. $A$ is called {\it special biserial} if it is Morita equivalent to an algebra of the form $\qa$ where $kQ$ is a path algebra and $I$ is an admissible ideal such that the following properties hold
		
		\begin{enumerate}[(1)]
			\item At every vertex $i$ in $Q$, there are at most two arrows starting at $i$ and there are at most two arrows ending at $i$.
			
			\item For every arrow $\alpha$ in $Q$, there exists at most one arrow $\beta$ such that $\beta\alpha\notin I$ and there exists at most one arrow $\gamma$ such that $\alpha\gamma\notin I$.
		\end{enumerate}
	\end{definition}

	Special biserial algebras are known to be of tame representation type \cite[Corollary 2.4]{WW}. The following result, established in \cite{SW}, characterizes the relationship between biserial and special biserial algebras.
	
	\begin{theorem}\label{sb is b}
		Let $A$ be a special biserial $k$-algebra. Then $A$ is biserial.
	\end{theorem}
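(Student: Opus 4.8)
The plan is to pass to a basic bound-quiver presentation and treat one indecomposable projective at a time. Biseriality is a Morita invariant, so I may assume $A = kQ/I$ with (1) and (2) holding literally; since (1)--(2) pass to $A^{\mathrm{op}}$, it suffices to handle the left projectives $P_i = Ae_i$. Write $\bar p$ for the image in $A$ of a path $p$. Then $\rad P_i$ is spanned by the $\bar p$ with $p$ a nonzero path out of $i$, so if $\alpha_1,\dots,\alpha_r$ (with $r\le 2$ by (1)) are the arrows with source $i$, then $\rad P_i = \sum_j U_{\alpha_j}$ where $U_\alpha := A\bar\alpha$. When $r\le 1$ the module $\rad P_i$ is cyclic, hence uniserial by the lemma below, so $P_i$ is uniserial; thus the only case to settle is $r = 2$ with $\alpha_1\neq\alpha_2$, where I must show $U_{\alpha_1}\cap U_{\alpha_2}$ is zero or simple.

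The lemma is that each $U_\alpha$ is uniserial. Iterating (2) shows that for a fixed arrow $\alpha$ there is at most one nonzero path of each prescribed length starting with $\alpha$ (an initial subpath of a nonzero path is nonzero, and (2) pins down the next arrow uniquely), so $\rad^m U_\alpha/\rad^{m+1}U_\alpha$ is at most one-dimensional and $U_\alpha$ is uniserial. It is useful to record the \emph{forced chain} $\alpha = \alpha^{(1)},\alpha^{(2)},\dots$, where $\alpha^{(m+1)}$ is the unique arrow (when it exists) with $\alpha^{(m+1)}\alpha^{(m)}\notin I$: the nonzero paths starting with $\alpha$ are exactly the prefixes $\alpha^{(m)}\cdots\alpha^{(1)}$ not lying in $I$, and their number equals $\dim U_\alpha$.

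Now put $W := U_{\alpha_1}\cap U_{\alpha_2}$. As a submodule of the uniserial $U_{\alpha_1}$ it is uniserial, and $\bar\alpha_j\notin W$ (else $\bar\alpha_1\equiv c\,\bar\alpha_2 \pmod{\rad^2 P_i}$ for a scalar $c$, contradicting $I\subseteq\langle Q_1\rangle^2$), so $W$ is a proper uniserial submodule of each $U_{\alpha_j}$. Suppose $\dim W\ge 2$. The first step is to show that the forced chains of $\alpha_1$ and $\alpha_2$ coincide from the socle of $W$ upward, as far as both are defined. The base step --- that the two arrows into $\soc U_{\alpha_1}=\soc W=\soc U_{\alpha_2}$ agree --- is a short computation inside the two-dimensional module $\rad^{\dim W - 2}W$: its two natural generators are proportional modulo $\soc W$, and applying the chain-arrow into the socle forces, via (2), these two arrows to be equal. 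The inductive step is immediate from the other half of (2): if the $m$-th arrows from the bottom agree, then so do the $(m{+}1)$-st, since both precede the common arrow. Hence one forced chain is a terminal segment of the other.

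This produces the contradiction. If the two chains have equal length their first arrows coincide, i.e.\ $\alpha_1 = \alpha_2$. Otherwise, say the chain of $U_{\alpha_1}$ is a proper terminal segment of that of $U_{\alpha_2}$; then the complementary initial part $c$ of the $\alpha_2$-chain is a nonzero oriented cycle at $i$, so $z := \bar c$ is a nilpotent element of the local algebra $e_i A e_i$. Letting $t$ be the top vertex of $W$, and $p$, $p'$ the forced prefixes (starting with $\alpha_1$, $\alpha_2$ respectively) whose images generate $W$, the chain alignment gives $p' = p\,c$ as a path, so $\overline{p'} = \bar p\, z$, while $\overline{p'} = u\,\bar p$ for a unit $u$ of $e_t A e_t$ (both images generate the simple-topped $W$). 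Therefore $\bar p\, z^{k} = u^{k}\,\bar p$ for every $k$, and choosing $k$ with $z^{k} = 0$ gives $\bar p = 0$, contradicting $W\neq 0$. Hence $\dim W\le 1$, so $\rad P_i$ is a sum of two uniserial submodules with zero or simple intersection, and $P_i$ is uniserial or biserial; as $i$ was arbitrary (and the same applies to $A^{\mathrm{op}}$), $A$ is biserial. I expect the real work to lie entirely in the last two paragraphs --- aligning the two forced chains, then turning the terminal-segment configuration into a genuine cycle whose nilpotency kills the generator of $W$ --- whereas the Morita reduction, the uniseriality lemma, and the case split are routine. (One could instead invoke the string-module classification of indecomposables over special biserial algebras, but the argument sketched here is self-contained.)
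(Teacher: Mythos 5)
Your proof is correct, but there is nothing in the paper to compare it with: the paper does not prove Theorem \ref{sb is b}, it simply quotes the result from Skowro\'nski--Waschb\"usch \cite{SW}. Your argument is a self-contained verification in the spirit of the classical proofs: the reduction to a bound quiver presentation and to left projectives is routine; the uniseriality of $U_\alpha=A\bar\alpha$ via the forced chain is standard (and your parenthetical ``prefixes not lying in $I$'' correctly handles possible long zero relations, since the nonzero prefixes form an initial run whose images are a basis of $U_\alpha$); and the two genuinely substantive steps check out. In the base step of the chain alignment, the congruence $\bar p_{m-1}\equiv c\,\bar q_{n-1}$ modulo $\soc W$ together with the fact that the simple module $\soc W$ is annihilated by every arrow gives $\alpha_1^{(m)}q_{n-1}\notin I$, and then condition (2) applied to the arrow $\alpha_2^{(n-1)}$ forces $\alpha_1^{(m)}=\alpha_2^{(n)}$; the inductive step is the other half of (2), as you say. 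In the final step, the equal-length case contradicts $\alpha_1\neq\alpha_2$, and in the unequal case the complementary piece $c$ is indeed a cycle at $i$ (its target is $s(\alpha_1)=i$ by the alignment), $z=\bar c$ is nilpotent, $u$ is a unit of the local algebra $e_tAe_t$ because a generator of $W$ cannot lie in $\rad W$, and $\bar p z^k=u^k\bar p$ then kills $\bar p$, a contradiction; hence $\dim_k(U_{\alpha_1}\cap U_{\alpha_2})\le 1$, which is exactly ``zero or simple'' since $A$ is basic. So your route buys a complete elementary proof where the paper only gives a citation; the only cosmetic suggestion is to state explicitly the two facts you use silently, namely that $\soc W$ is killed by $\rad A$ and that the nonzero forced prefixes give a $k$-basis of $U_\alpha$ compatible with its radical filtration.
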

	
	However, the converse of Theorem \ref{sb is b} fails, as demonstrated by an example in \cite[Section 5]{FS}.
	
	\begin{example}\label{b isn sb}
		Consider the quiver $Q$ which is given by
		\[\begin{tikzcd}
			1 && 2
			\arrow["{x_1}"{description}, shift left=5, from=1-1, to=1-3]
			\arrow["{y_1}"{description}, shift left=2, from=1-1, to=1-3]
			\arrow["{x_2}"{description}, shift left=2, from=1-3, to=1-1]
			\arrow["{y_2}"{description}, shift left=5, from=1-3, to=1-1]
		\end{tikzcd}\]
		with the relations $x_1y_2=y_1x_2=x_1x_2=y_1y_2, \;y_2x_1=x_2y_1, \;x_2x_1=y_2y_1=0$. Then the corresponding bound quiver algebra $A=\qa$ is weakly symmetric and finite-dimensional.
		Indeed, $A$ is biserial but not special biserial since $x_1x_2\neq 0$, and at the same time $y_1x_2\neq 0$ in $A$.
	\end{example}
	
	There are several equivalent characterizations of symmetric algebras; see \cite[Theorem 3.1]{Ric} and \cite{Z}. For our purposes, we use the following definition.

	\begin{definition}
Let $A$ be a finite-dimensional $k$-algebra. If there exists a $k$-linear map $\psi: A \to k$ satisfying:
\begin{enumerate}
    \item \textbf{Symmetry:} $\psi(xy) = \psi(yx)$ for all $x, y \in A$;
    \item \textbf{Non-degeneracy:} the bilinear form
    \[
        \langle x, y \rangle := \psi(xy)
    \]
    is non-degenerate, i.e., if $\psi(xy) = 0$ for all $y \in A$, then $x = 0$;
\end{enumerate}
then $A$ is called a {\it symmetric algebra}, and $\psi$ is called a {\it symmetrizing form} for $A$.
\end{definition}
	
	While any finite-dimensional $k$-algebra $A$ is a quotient of some symmetric algebra (e.g., its trivial extension $T(A)$), special biserial algebras have the stronger property that they are quotients of symmetric algebras that remain special biserial.

	\begin{theorem}\textnormal{(\cite[Theorem 1.4]{WW})}\label{quotient}
		Every special biserial algebra is a quotient of a symmetric special biserial algebra.
	\end{theorem}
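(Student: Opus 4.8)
The plan is to realize a symmetric special biserial cover of $A=\qa$ as a Brauer graph algebra obtained from the combinatorial data of $A$ by ``closing up'' its quiver together with its successor function, and then taking \emph{large} multiplicities so that the resulting long relations become redundant when mapped back to $A$.

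First I would normalize the situation: fix the special biserial presentation $A=\qa$ and reduce (using that everything in sight is compatible with taking products) to $A$ connected, the degenerate cases such as $A=k$ or $A=k[x]/(x^N)$ being immediate since these are themselves quotients of symmetric special biserial algebras. Condition~(2) in the definition of special biserial gives a partial injection $f$ on $Q_1$, where $f(\beta)$ is the unique arrow $\alpha$ (if any) with $\alpha\beta\notin I$; injectivity of $f$ and the existence of an inverse partial map $g$ (with $g(\alpha)$ the unique $\gamma$ with $\alpha\gamma\notin I$) both follow immediately from the two ``at most one'' clauses, and one checks $g=f^{-1}$ on the relevant domains. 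Next I would enlarge $Q$ to a quiver $\hat Q\supseteq Q$ by adding new arrows so that every vertex acquires equal in- and out-degree while both stay $\le 2$: at a vertex $v$ with in-degree $a_v>b_v=$ out-degree add $a_v-b_v$ new outgoing arrows, and dually add new incoming arrows when $b_v>a_v$; since $\sum_v a_v=\sum_v b_v=|Q_1|$, the new out-ends and in-ends can be paired off arbitrarily into new arrows, each running from an ``out-deficient'' vertex to an ``in-deficient'' one, so no vertex degree exceeds $2$. Then extend $f$ to a genuine permutation $\hat f$ of $\hat Q_1$ by choosing, at each vertex $v$, any bijection from the arrows into $v$ to the arrows out of $v$ extending $f$ there (possible since $f$ restricted to the arrows into $v$ is already injective into the arrows out of $v$).

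The pair $(\hat Q,\hat f)$ is the quiver-plus-successor datum of a ribbon graph, and I would let $\hat A:=k\hat Q/\hat I$ be the associated Brauer graph algebra (cf.\ \cite{Sch}) with the \emph{constant} multiplicity function $\m\equiv N$ for some fixed $N\ge\max(m,2)$, where $\langle Q_1\rangle^m\subseteq I$. Here $\hat I$ is the standard Brauer-graph ideal: the quadratic monomial relations $\gamma\beta$ for consecutive arrows with $\gamma\neq\hat f(\beta)$; the commutativity relations equating the two $\hat f$-cycle powers attached to each arrow; and the socle relations killing each such cycle power followed by one more $\hat f$-arrow. Recall that $\hat A$ is finite-dimensional, is special biserial (the valency bound holds by construction and $\hat f$ is a true permutation, so every arrow has a unique nonzero length-two continuation on each side), and is symmetric (the symmetrizing form is the one dual to a fixed maximal nonzero path through each indecomposable projective). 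Finally I would produce the surjection. Let $\pi\colon k\hat Q\to A$ be the retraction killing every new arrow, composed with $kQ\twoheadrightarrow A$, and check $\pi(\hat I)=0$ on generators. A quadratic relation $\gamma\beta$ with $\gamma\neq\hat f(\beta)$: if it involves a new arrow then $\pi(\gamma\beta)=0$; otherwise $\gamma,\beta\in Q_1$ and $\gamma\neq\hat f(\beta)$ restricts either to $\gamma\neq f(\beta)$ with $f(\beta)$ defined, or to $f(\beta)$ undefined, and in both cases $\gamma\beta\in I$ by the definition of $f$. A commutativity or socle relation is assembled from $\hat f$-cycle powers $C$ of length at least $N\ge m$; if the $\hat f$-cycle through the relevant arrow contains a new arrow then $\pi(C)=0$, and otherwise $C$ is an honest path in $Q$ of length $\ge m$, hence lies in $\langle Q_1\rangle^m\subseteq I$. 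Thus $\pi$ descends to a surjective algebra homomorphism $\hat A\twoheadrightarrow A$ (surjective since $Q_0\cup Q_1$ generates $A$), which is exactly the assertion.

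The main obstacle is the construction of $\hat A$ rather than any single hard computation: one must close up both the quiver and the successor function $f$ into a genuine ribbon-graph datum \emph{without} violating the ``at most two arrows in/out per vertex'' condition (this is the little counting argument above), and — the actual point — one must take the multiplicity function large enough that the long commutativity and socle relations of $\hat A$ are forced into the admissible, hence eventually vanishing, ideal $I$ under $\pi$; everything after that is formal. (An alternative route avoiding ribbon graphs altogether is to quotient the trivial extension $T(A)$ by the ideal generated by the ``crossed'' products of distinct maximal nonzero paths of $A$ and verify that the quotient is again special biserial, but this amounts to essentially the same bookkeeping.)
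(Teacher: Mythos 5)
Your argument is correct, but it is worth noting that the paper does not actually prove Theorem \ref{quotient}: it is quoted from \cite{WW}, and the only proof in this paper of a statement of this type is the quasi-biserial generalization, Theorem \ref{ssqba quotient}. Compared with that proof (and with the original track argument of \cite{WW} it adapts), your route is genuinely different in packaging, though parallel in spirit. The paper decomposes the quiver into generalized tracks, glues them with a star product, closes non-closed tracks by adjoining arrows case by case, and then verifies symmetry by hand via an explicitly constructed socle form on an algebra whose relations involve $n$-th powers of the closed tracks with $\rad^n(A)=0$. You instead encode condition (2) of special biseriality as a partial injection $f$ on arrows, complete the quiver so that in- and out-degrees agree (staying $\le 2$), extend $f$ to a genuine successor permutation $\hat f$, and then invoke the Brauer graph formalism (Theorem \ref{ssbBGA}, Theorem \ref{thm:BTA-RFS}) with one uniform multiplicity $N\ge\max(m,2)$; symmetry then comes for free, and the surjection is trivial to check because every cycle power has length at least $m$, hence dies in $A$ by admissibility. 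This buys a cleaner, almost computation-free verification for the biserial case (and is close in spirit to the argument of \cite{GS3} in the multiserial setting), whereas the paper's track machinery is what generalizes to the quasi-biserial case, where single arrows may be shared between tracks and no global successor permutation on arrows is available. Two cosmetic points: the phrase ``constant multiplicity function $\m\equiv N$'' should be understood as assigning $N$ to each $\hat f$-cycle (non-truncated vertex) and $1$ to the truncated vertices created by quiver vertices of out-degree one, since otherwise the Brauer graph algebra of the resulting graph would acquire extra loops and its quiver would no longer be $\hat Q$; this is harmless here because you specify the ideal $\hat I$ directly in terms of $\hat f$. Likewise the commutativity relations should be indexed by the quiver vertices (edges of the ribbon graph) lying on two cycle branches rather than ``attached to each arrow''; with these readings the verification that $\pi(\hat I)=0$ and the surjectivity of $\hat A\twoheadrightarrow A$ go through exactly as you state.
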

	
	Symmetric special biserial algebras are particularly well understood and occur frequently in the modular representation theory of finite groups. In this context, they are more commonly known as Brauer graph algebras. Although we will formally define Brauer graph algebras in Section \ref{sec:BGA}, we first state their relationship to symmetric special biserial algebras as follows.
	
	\begin{theorem}\textnormal{(\cite[Theorem 1.1]{Sch})}\label{ssbBGA}
		Let $A=\qa$ be a finite dimensional $k$-algebra with $k$ algebraically closed. Then $A$ is a symmetric special biserial algebra if and only if $\qa$ is a Brauer graph algebra.
	\end{theorem}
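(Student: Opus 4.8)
The plan is to prove the two implications separately, with essentially all of the work concentrated in the ``only if'' direction. For the ``if'' direction, one unwinds the definition of a Brauer graph algebra given in Section~\ref{sec:BGA}: the quiver of such an algebra has at most two arrows entering and at most two arrows leaving each vertex by construction (the arrows at a vertex come from the cyclic orderings at the two endpoints of the corresponding edge), the generating relations are either monomial or of the form ``the two special cycles through a vertex agree'', which immediately yields condition~(2) in the definition of special biseriality, and the standard length form exhibits the algebra as symmetric. So a Brauer graph algebra is symmetric special biserial, and the substance of the theorem is the converse.

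For the converse, let $A=\qa$ be symmetric special biserial. The first step I would carry out is to extract the combinatorial data. For each vertex $i\in Q_0$, Theorem~\ref{sb is b} shows $P_i$ is uniserial or biserial; in either case, using that $A$ is symmetric (so $\soc P_i\cong S_i$ is the top of $P_i$), $\rad P_i$ is a sum $U_i+V_i$ of at most two uniserial submodules with $U_i\cap V_i\subseteq\soc P_i$. Reading the composition factors of $U_i$ and $V_i$ from top to bottom produces two walks in $Q$ returning to $i$, and condition~(2) of special biseriality guarantees that at every vertex these walks are forced, i.e.\ determined by the arrow just used. One then builds a ribbon graph $\Gamma$ whose edges are the vertices of $Q$, glued so that the germ of the walk $U_i$ (resp.\ $V_i$) at its far end matches the germ of the walk it continues into; the cyclic ordering at a vertex $v$ of $\Gamma$ is exactly the successor relation $\alpha\mapsto\beta$ with $\beta\alpha\notin I$, read around $v$. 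The multiplicity $\m(v)$ is the number of times the cyclic walk around $v$ is traversed inside the projectives incident to $v$.

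The second step is to show that the Brauer graph algebra $B=kQ/I'$ attached to $(\Gamma,\m)$ is isomorphic to $A$. By construction $B$ and $A$ have the same quiver $Q$, and $I'$ is generated by the monomial relations dictated by the walks together with the relations equating the two special cycles through each vertex. After the special biserial analysis, every generator of $I$ is either one of these monomial relations or of the form $p-\lambda_i q$, where $p,q$ are the two special cycles at $i$ and $\lambda_i\in k^\times$. It then remains to rescale the arrows $\alpha\mapsto t_\alpha\alpha$ ($t_\alpha\in k^\times$) so that all the $\lambda_i$ become $1$, which produces an algebra isomorphism $A\cong B$.

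I expect this last normalization to be the main obstacle. Rescaling an arrow multiplies each $\lambda_i$ by a monomial in the $t_\alpha$, and since every arrow lies on special cycles the linear system forcing all $\lambda_i=1$ is not free: around each cyclic walk at a vertex $v$ the product of the relevant $\lambda_i$'s is a genuine invariant, and one must check, using the symmetrizing form (which identifies this product with a coefficient that is necessarily a unit and can be absorbed), that it can be set to $1$, possibly after extracting an $\m(v)$-th root, which is where $k$ algebraically closed enters. In parallel, I would have to handle the degenerate configurations with care: uniserial projectives correspond to edges of $\Gamma$ incident to a vertex whose multiplicity truncates the walk, loops in $Q$ to edges that are loops of $\Gamma$ or to self-foldings of a walk, and one must verify the ribbon graph and $\m$ remain well defined (in particular that an edge may have its two endpoints equal). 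Checking that these boundary cases yield exactly the Brauer graph algebra, and not a strictly larger or smaller quotient, is the fiddly part of the argument.
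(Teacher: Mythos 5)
The paper does not prove this statement itself (it is quoted from \cite{Sch}), but your outline is the standard argument and coincides in approach with the proof the paper gives for its generalization (Theorem \ref{ssqb is sf-BGA}): build the ribbon graph from the special cycles read off the uniserial components of the projectives, rescale the arrows using the symmetrizing form with an extraction of $\m(v)$-th roots—exactly where algebraic closedness enters, cf.\ Proposition \ref{prop:quiver-relations}—and then identify the two algebras by comparing the path bases of corresponding projectives, cf.\ Lemma \ref{k-basis of proj}. So your proposal is correct and takes essentially the same route.
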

	
	As we will show in Section \ref{sec:BGA}, every Brauer graph algebra is completely determined by its combinatorial data, namely, its ribbon graph and multiplicity function. Consequently, Theorem \ref{ssbBGA} establishes that all symmetric special biserial algebras admit a purely combinatorial description.
	
	\subsection{Brauer graph algebras}\label{sec:BGA}
	\
	\smallskip
	
		We review the basic notions on ribbon graphs and Brauer graph algebras. For details we refer to \cite{SS} and \cite{OZ}.
		
	\subsubsection{Brauer graphs}
	\
	\smallskip
	
	Ribbon graphs combinatorially encode the structure of oriented surfaces with boundary (see for example in \cite[Section 1.1]{OZ}). A key feature of ribbon graphs is the cyclic ordering of (half-)edges at each vertex, which captures the orientation data of the underlying surface. We begin this section by recalling their formal definition.
	
	\begin{definition}\label{def:ribbon-graph}
		A {\it ribbon graph} is a tuple $\Gamma=(V,H,s,\iota,\rho)$, where
		\begin{enumerate}[(1)]
			\item $V$ is a finite set whose elements are called vertices;
			
			\item $H$ is a finite set whose elements are called half-edges;
			
			\item $s: H\rightarrow V$ is a function;
			
			\item $\iota: H\rightarrow H$ is an involution without fixed points;
			
			\item $\rho: H\rightarrow H$ is a permutation whose cycles correspond to the sets $H_v:=s^{-1}(v)$, $v\in V$.
			
		\end{enumerate}
	\end{definition}
	
	Therefore, every ribbon graph defines a graph with vertex set $V$ whose edges are the orbits of $\iota$. For instance, the edge $\bar h:=\{h, \iota(h)\}$ is incident to the vertices $s(h)$ and $s(\iota(h))$. 
	
	For a ribbon graph   $\Gamma=(V,H,s, \iota, \rho)$ we need the following notations. For each vertex $v \in V$ we denote 
	\[
	H_v=\{h \in H \mid  s(h)=v\},
	\]
	the set of half edges incident to $v$ and denote by $\mathrm{val}(v)$ the valency of the vertex $v\in V$, namely
	\[
	\mathrm{val}(v)= |H_v|.
	\]
	In particular, a loop is counted twice in $\mathrm{val}(v).$
	For each half-edge $h\in H$ we denote 
	\[
	h^+ = \rho (h) \;\; \text{and}\;\;  h^-= \rho^{-1}(h)
	\]
	the successor and predecessor of $h$ respectively, and denote by $$\bar h=\{h, \iota(h)\}$$ the edge associated to $h$ in the graph induced by $\Gamma$. The set of all edges is denoted by $E(\Gamma)$.  
	
	Unless stated otherwise, we will assume that $\Gamma$ is connected, i.e. its underlying graph is connected.

	\begin{definition}\label{BG}
		A {\it Brauer graph} is a pair $(\Gamma,\mathbbm{m})$ consisting of a ribbon graph $\Gamma =(V,H,s, \iota, \rho)$ and a function $\mathbbm{m}: V \rightarrow \mathbb{Z}_{>0}$.
	\end{definition}
	
	That is, a Brauer graph is simply a ribbon graph with  positive integers (i.e.\ multiplicities) assigned to vertices. The function $\mathbbm{m}$ in Definition \ref{BG} is referred to as the multiplicity function and its values as multiplicities. 
	In particular,  we call a given vertex $v \in V$ in a Brauer graph is {\it truncated} if $\m(v)=1 $ and $\mathrm{val}(v)=1$. Define $$H'=H\;\big\backslash\; \left(\bigcup_{\text{$v$ truncated}}H_v\right),$$
	$$E(\Gamma)'=E(\Gamma)\;\big\backslash\; \left(\bigcup_{\text{$h\in H_v$ with $v$ truncated}}\{\bar h\}\right).$$
	A Brauer graph is called a {\it Brauer tree} if its associated ribbon graph $\Gamma$ is a tree and the multiplicity function $\m$ assigns the value $1$ to all vertices, except possibly for one vertex, which is then called the {\it exceptional vertex}.

	\subsubsection{Brauer graph algebras}
	\
	\smallskip
	
	Let $(\Gamma,\mathbbm{m})$ be a Brauer graph and $k$ be a base field.  One can associate a quiver $Q=Q_\Gamma$ and an admissible ideal of relations $I=I_\Gamma$ in the path algebra $kQ$ as follows. 
	
	\begin{enumerate}[(1)]
		\item The vertices of $Q$ correspond to the edges $E(\Gamma)$ of $\Gamma$. For every half-edge $h\in H'$, which means that $s(h)$ is not truncated, there is an arrow in $Q$ $$\alpha_h:\bar{h}\rightarrow\bar{h^+}$$ from the edge $\bar h$ to the edge $\bar{h^+}$ associated to the successor $h^+$ (see Figure \ref{fig1}). That is, $\alpha_h$ may be understood as the angle around the vertex $s(h)\in V$ starting from the half-edge $h$ and ending at $h^+$, which induces a one-to-one correspondence between the set $Q_1$ of arrows and the set $H'$ of half edges.  We call the arrow $\alpha_h\in Q_1$  an arrow around the vertex $s(h)$.
		
		In particular, at each vertex in the quiver $Q_\Gamma$ there are at most two incoming arrows and at most two outgoing arrows. 
		
		Note that  we have a natural permutation 
		\[
		\sigma \colon Q_1 \to Q_1, \alpha_h\mapsto \alpha_{h^-}
		\]
		whose orbits are in bijection with the vertices which are not truncated. Hence to each arrow $\alpha= \alpha_h$ we may associate a cycle
		$$C_\alpha=\alpha\sigma(\alpha)\cdots\sigma^l(\alpha)$$
		where $l+1$ denotes the cardinality of the $\sigma$-orbit of $\alpha$ (see Figure \ref{fig1}). We call a cycle in $Q$ a {\it special cycle} if it can be expressed in the form $C_\alpha$ with some $\alpha\in Q_1$. For simplicity, we define the multiplicity of the cycle $C_{\alpha_h}$ as the multiplicity of the vertex $s(h)$ and write $$ \m(C_{\alpha_h}) = \m(s(h)),$$ with a slight abuse of notation.
		
		\begin{figure}[ht]
			\centering

		\tikzset{every picture/.style={line width=0.75pt}} 
		
		\begin{tikzpicture}[x=0.75pt,y=0.75pt,yscale=-1,xscale=1]
				
				\draw  [fill={rgb, 255:red, 0; green, 0; blue, 0 }  ,fill opacity=1 ] (315,134) .. controls (315,131.24) and (317.24,129) .. (320,129) .. controls (322.76,129) and (325,131.24) .. (325,134) .. controls (325,136.76) and (322.76,139) .. (320,139) .. controls (317.24,139) and (315,136.76) .. (315,134) -- cycle ;
				\draw [line width=1.5]    (240,134) -- (320,134) ;
				\draw [line width=1.5]    (320,134) -- (400,134) ;
				\draw [line width=1.5]    (320,134) -- (320,214) ;
				\draw [line width=1.5]    (320,54) -- (320,134) ;
				\draw  [draw opacity=0][dash pattern={on 0.84pt off 2.51pt}] (316.83,158.8) .. controls (306.04,157.44) and (297.4,149.2) .. (295.42,138.61) -- (320,134) -- cycle ; \draw  [dash pattern={on 0.84pt off 2.51pt}] (316.83,158.8) .. controls (306.04,157.44) and (297.4,149.2) .. (295.42,138.61) ;  
				\draw  [draw opacity=0] (295.71,128.04) .. controls (298.04,118.55) and (305.77,111.19) .. (315.46,109.41) -- (320,134) -- cycle ; \draw   (295.71,128.04) .. controls (298.04,118.55) and (305.77,111.19) .. (315.46,109.41) ;  
				\draw   (307.07,107.01) -- (316.68,109.19) -- (311.56,117.6) ;
				\draw  [draw opacity=0] (325.61,109.63) .. controls (334.63,111.7) and (341.78,118.64) .. (344.16,127.54) -- (320,134) -- cycle ; \draw   (325.61,109.63) .. controls (334.63,111.7) and (341.78,118.64) .. (344.16,127.54) ;  
				\draw   (347,119) -- (344.39,128.5) -- (336.22,123) ;
				\draw  [draw opacity=0] (344.35,139.67) .. controls (342.17,149.09) and (334.65,156.48) .. (325.15,158.47) -- (320,134) -- cycle ; \draw   (344.35,139.67) .. controls (342.17,149.09) and (334.65,156.48) .. (325.15,158.47) ;  
				\draw   (333.23,160.95) -- (323.47,159.63) -- (327.83,150.8) ;
				\draw  [draw opacity=0] (384.77,139.48) .. controls (381.99,172.82) and (354.05,199) .. (320,199) .. controls (284.1,199) and (255,169.9) .. (255,134) .. controls (255,98.1) and (284.1,69) .. (320,69) .. controls (354.71,69) and (383.06,96.2) .. (384.9,130.45) -- (320,134) -- cycle ; \draw   (384.77,139.48) .. controls (381.99,172.82) and (354.05,199) .. (320,199) .. controls (284.1,199) and (255,169.9) .. (255,134) .. controls (255,98.1) and (284.1,69) .. (320,69) .. controls (354.71,69) and (383.06,96.2) .. (384.9,130.45) ;  
				\draw   (388.92,122.76) -- (384.73,131.68) -- (377.61,124.87) ;
			
			\draw (315,38) node [anchor=north west][inner sep=0.75pt]   [align=left] {$h$};
			\draw (405,125) node [anchor=north west][inner sep=0.75pt]   [align=left] {$h^+$};
			\draw (222,125) node [anchor=north west][inner sep=0.75pt]   [align=left] {$h^-$};
			\draw (340,93) node [anchor=north west][inner sep=0.75pt]   [align=left] {$\alpha_h$};
			\draw (276,90) node [anchor=north west][inner sep=0.75pt]   [align=left] {$\sigma(\alpha_h)$};
			\draw (335,155) node [anchor=north west][inner sep=0.75pt]   [align=left] {$\sigma^l(\alpha_h)$};
			\draw (378,82) node [anchor=north west][inner sep=0.75pt]   [align=left] {$C_{\alpha_h}$};

		\end{tikzpicture}

			\caption{Arrows and special cycles in the quiver $Q_\Gamma$.}
			\label{fig1}
		\end{figure}
		
		\item The ideal $I_\Gamma$ is generated by the following set of relations:
			
				\begin{enumerate}[(i)]
				\item For each edge $\bar h\in E(\Gamma)'$, we have the relation $$C_\alpha^{\m(C_\alpha)}=C_\beta^{\m(C_\beta)},$$
				where $\alpha,\beta\in Q_1$ and $t(\alpha)=t(\beta) =\bar h$, that is $\alpha$ and $\beta$ end at the same edge $\bar h \in E(\Gamma)$  (see the left of Figure \ref{fig2}). Here $C_\alpha^{ \m(C_\alpha)}= \underbrace{C_\alpha C_\alpha\dotsb C_\alpha}_{ \m(C_\alpha)}$ and $ \m(C_\alpha)$ is the multiplicity of the vertex which the cycle $C_\alpha$ is around. 
				
				\item For each arrow $\alpha\in Q_1$, we impose the monomial relation $$C_{\alpha}^{\m(C_\alpha)}\alpha=0,$$
				ensuring that the quotient algebra $kQ/I$ is finite dimensional.
				
				\item For any composable arrows $\alpha$ and $\beta$ such that  $\sigma(\alpha)\neq \beta$ (see the right of Figure \ref{fig2}), we have the monomial relation $$\alpha\beta=0.$$
				This type of relation is similar to the one in gentle algebras (see e.g.\ \cite{SS}).
			\end{enumerate}
			
			\begin{figure}[ht]
				\centering

				\tikzset{every picture/.style={line width=0.75pt}} 
				
				\begin{tikzpicture}[x=0.75pt,y=0.75pt,yscale=-1,xscale=1]
					\begin{scope}[xshift=-120pt]
						
						\draw  [fill={rgb, 255:red, 0; green, 0; blue, 0 }  ,fill opacity=1 ] (140,105) .. controls (140,102.24) and (142.24,100) .. (145,100) .. controls (147.76,100) and (150,102.24) .. (150,105) .. controls (150,107.76) and (147.76,110) .. (145,110) .. controls (142.24,110) and (140,107.76) .. (140,105) -- cycle ;
						\draw  [fill={rgb, 255:red, 0; green, 0; blue, 0 }  ,fill opacity=1 ] (290,105) .. controls (290,102.24) and (292.24,100) .. (295,100) .. controls (297.76,100) and (300,102.24) .. (300,105) .. controls (300,107.76) and (297.76,110) .. (295,110) .. controls (292.24,110) and (290,107.76) .. (290,105) -- cycle ;
						\draw [line width=1.5]    (145,105) -- (295,105) ;
						\draw [line width=1.5]    (98,57.5) -- (145,105) ;
						\draw [line width=1.5]    (295,105) -- (342,152.5) ;
						\draw [line width=1.5]    (145,105) -- (98,152.5) ;
						\draw [line width=1.5]    (342,57.5) -- (295,105) ;
						\draw  [draw opacity=0] (194.27,113.55) .. controls (190.22,137.09) and (169.7,155) .. (145,155) .. controls (117.39,155) and (95,132.61) .. (95,105) .. controls (95,77.39) and (117.39,55) .. (145,55) .. controls (170.87,55) and (192.16,74.65) .. (194.74,99.84) -- (145,105) -- cycle ; \draw   (194.27,113.55) .. controls (190.22,137.09) and (169.7,155) .. (145,155) .. controls (117.39,155) and (95,132.61) .. (95,105) .. controls (95,77.39) and (117.39,55) .. (145,55) .. controls (170.87,55) and (192.16,74.65) .. (194.74,99.84) ;  
						\draw  [draw opacity=0][dash pattern={on 0.84pt off 2.51pt}] (128.61,116.47) .. controls (126.34,113.22) and (125,109.27) .. (125,105) .. controls (125,100.75) and (126.33,96.81) .. (128.59,93.57) -- (145,105) -- cycle ; \draw  [dash pattern={on 0.84pt off 2.51pt}] (128.61,116.47) .. controls (126.34,113.22) and (125,109.27) .. (125,105) .. controls (125,100.75) and (126.33,96.81) .. (128.59,93.57) ;  
						\draw  [draw opacity=0] (133.16,88.88) .. controls (136.48,86.44) and (140.57,85) .. (145,85) .. controls (154.73,85) and (162.84,91.95) .. (164.63,101.16) -- (145,105) -- cycle ; \draw   (133.16,88.88) .. controls (136.48,86.44) and (140.57,85) .. (145,85) .. controls (154.73,85) and (162.84,91.95) .. (164.63,101.16) ;  
						\draw   (168.31,93.43) -- (164.82,102.3) -- (157.29,96.46) ;
						\draw   (198.94,93.06) -- (194.34,101.4) -- (187.63,94.63) ;
						\draw  [draw opacity=0] (245.42,98.46) .. controls (248.63,73.93) and (269.6,55) .. (295,55) .. controls (322.61,55) and (345,77.39) .. (345,105) .. controls (345,132.61) and (322.61,155) .. (295,155) .. controls (269.74,155) and (248.86,136.27) .. (245.48,111.94) -- (295,105) -- cycle ; \draw   (245.42,98.46) .. controls (248.63,73.93) and (269.6,55) .. (295,55) .. controls (322.61,55) and (345,77.39) .. (345,105) .. controls (345,132.61) and (322.61,155) .. (295,155) .. controls (269.74,155) and (248.86,136.27) .. (245.48,111.94) ;  
						\draw   (240.61,119.1) -- (245.34,110.83) -- (251.94,117.71) ;
						\draw  [draw opacity=0][dash pattern={on 0.84pt off 2.51pt}] (311.19,93.26) .. controls (313.59,96.56) and (315,100.61) .. (315,105) .. controls (315,109.06) and (313.79,112.84) .. (311.71,115.99) -- (295,105) -- cycle ; \draw  [dash pattern={on 0.84pt off 2.51pt}] (311.19,93.26) .. controls (313.59,96.56) and (315,100.61) .. (315,105) .. controls (315,109.06) and (313.79,112.84) .. (311.71,115.99) ;  
						
						\draw (150,72) node [anchor=north west][inner sep=0.75pt]   [align=left] {$\alpha$};
						\draw (74,96) node [anchor=north west][inner sep=0.75pt]   [align=left] {$C_\alpha$};
						\draw (349,96) node [anchor=north west][inner sep=0.75pt]   [align=left] {$C_\beta$};
						\draw (213,85) node [anchor=north west][inner sep=0.75pt]   [align=left] {$\bar h$};
				
					\end{scope}
					\begin{scope}[xshift=120pt]
							\draw  [fill={rgb, 255:red, 0; green, 0; blue, 0 }  ,fill opacity=1 ] (140,105) .. controls (140,102.24) and (142.24,100) .. (145,100) .. controls (147.76,100) and (150,102.24) .. (150,105) .. controls (150,107.76) and (147.76,110) .. (145,110) .. controls (142.24,110) and (140,107.76) .. (140,105) -- cycle ;
							\draw  [fill={rgb, 255:red, 0; green, 0; blue, 0 }  ,fill opacity=1 ] (290,105) .. controls (290,102.24) and (292.24,100) .. (295,100) .. controls (297.76,100) and (300,102.24) .. (300,105) .. controls (300,107.76) and (297.76,110) .. (295,110) .. controls (292.24,110) and (290,107.76) .. (290,105) -- cycle ;
							\draw [line width=1.5]    (145,105) -- (295,105) ;
							\draw [line width=1.5]    (98,57.5) -- (145,105) ;
							\draw [line width=1.5]    (295,105) -- (342,152.5) ;
							\draw [line width=1.5]    (145,105) -- (98,152.5) ;
							\draw  [draw opacity=0][dash pattern={on 0.84pt off 2.51pt}] (265.53,99.33) .. controls (268.19,85.47) and (280.37,75) .. (295,75) .. controls (311.57,75) and (325,88.43) .. (325,105) .. controls (325,110.47) and (323.53,115.61) .. (320.97,120.03) -- (295,105) -- cycle ; \draw  [dash pattern={on 0.84pt off 2.51pt}] (265.53,99.33) .. controls (268.19,85.47) and (280.37,75) .. (295,75) .. controls (311.57,75) and (325,88.43) .. (325,105) .. controls (325,110.47) and (323.53,115.61) .. (320.97,120.03) ;  
							\draw  [draw opacity=0] (312.55,129.33) .. controls (307.62,132.9) and (301.55,135) .. (295,135) .. controls (279.78,135) and (267.21,123.67) .. (265.26,108.98) -- (295,105) -- cycle ; \draw   (312.55,129.33) .. controls (307.62,132.9) and (301.55,135) .. (295,135) .. controls (279.78,135) and (267.21,123.67) .. (265.26,108.98) ;  
							\draw   (259.85,117.35) -- (264.9,107.37) -- (271.95,116.06) ;
							\draw  [draw opacity=0][dash pattern={on 0.84pt off 2.51pt}] (122.42,124.76) .. controls (117.8,119.48) and (115,112.57) .. (115,105) .. controls (115,98.37) and (117.15,92.24) .. (120.8,87.27) -- (145,105) -- cycle ; \draw  [dash pattern={on 0.84pt off 2.51pt}] (122.42,124.76) .. controls (117.8,119.48) and (115,112.57) .. (115,105) .. controls (115,98.37) and (117.15,92.24) .. (120.8,87.27) ;  
							\draw  [draw opacity=0] (126.44,81.43) .. controls (131.55,77.4) and (137.99,75) .. (145,75) .. controls (159.57,75) and (171.72,85.39) .. (174.43,99.17) -- (145,105) -- cycle ; \draw   (126.44,81.43) .. controls (131.55,77.4) and (137.99,75) .. (145,75) .. controls (159.57,75) and (171.72,85.39) .. (174.43,99.17) ;  
							\draw  [draw opacity=0] (174.63,109.73) .. controls (172.36,124.05) and (159.96,135) .. (145,135) .. controls (138.84,135) and (133.12,133.15) .. (128.36,129.96) -- (145,105) -- cycle ; \draw   (174.63,109.73) .. controls (172.36,124.05) and (159.96,135) .. (145,135) .. controls (138.84,135) and (133.12,133.15) .. (128.36,129.96) ;  
							\draw   (131.27,139.67) -- (126.39,129.61) -- (137.56,129.25) ;
							\draw   (178.54,89.32) -- (174.25,99.65) -- (166.57,91.52) ;
							
							\draw (160,60) node [anchor=north west][inner sep=0.75pt]   [align=left] {$\sigma(\alpha)$};
							\draw (163,131) node [anchor=north west][inner sep=0.75pt]   [align=left] {$\alpha$};
							\draw (268,131) node [anchor=north west][inner sep=0.75pt]   [align=left] {$\beta$};
					\end{scope}
				\end{tikzpicture}
				\caption{Subgraphs corresponding to relations in $I_\Gamma$.}
				\label{fig2}
			\end{figure}
			
	\end{enumerate}
	
	\begin{definition}
		A $k$-algebra $A$ is called a {\it Brauer graph algebra} if there exists a Brauer graph $(\Gamma, \m)$ such that $A\cong kQ_\Gamma/I_\Gamma$ as $k$-algebras.
	\end{definition}
	
	The following theorem collects some general properties on Brauer graph algebras.
	
	\begin{theorem}\textnormal{(cf. \cite[Section 2.5]{SS})}\label{thm:BTA-RFS}
		Let $A=kQ/I$ be a Brauer graph algebra with its associated Brauer graph $(\Gamma,\m)$.
		\begin{enumerate}[(1)]
			\item $A$ is finite dimensional, symmetric and special biserial.
			
			\item $A$ is of finite representation type if and only if it is a Brauer tree algebra, that is, $(\Gamma,\m)$ is a Brauer tree.
		\end{enumerate}
	\end{theorem}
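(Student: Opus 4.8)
The plan is to verify the three assertions of part (1) directly from the combinatorial construction of $(Q_\Gamma,I_\Gamma)$, and to deduce part (2) from the string-and-band description of modules over special biserial algebras provided by \cite{WW}.

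For part (1) I would first record an explicit $k$-basis of $A=kQ_\Gamma/I_\Gamma$. Relation (ii) forces any initial subpath of $C_\alpha^{\m(C_\alpha)}\alpha$ of length exceeding $\ell(C_\alpha)\,\m(C_\alpha)$ to vanish, and relation (iii) forces a nonzero path to be prolonged only along the arrow prescribed by $\sigma$; hence every nonzero path of $A$ is the residue of an initial subpath of some $C_\alpha^{\m(C_\alpha)}$, while relation (i) merely identifies the two maximal such paths $C_\alpha^{\m(C_\alpha)}=C_\beta^{\m(C_\beta)}=:z_{\bar h}$ ending at a common edge $\bar h$. Thus a basis is given by the residues of the proper initial subpaths of the $C_\alpha^{\m(C_\alpha)}$ together with the elements $z_{\bar h}$, $\bar h\in E(\Gamma)$; in particular $A$ is finite dimensional, and $I_\Gamma$ is admissible since each listed relation lies in $\langle Q_1\rangle^2$ — here one uses that a special cycle of length one occurs only at a non-truncated vertex of valency one, which therefore carries multiplicity $\geq 2$. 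Special biseriality is then read off from the construction of $Q_\Gamma$: the arrows ending at the quiver vertex $\bar h=\{h,\iota(h)\}$ are the at most two arrows $\alpha_{h^-}$ and $\alpha_{\iota(h)^-}$, and dually for arrows starting at $\bar h$, giving the first axiom; and relation (iii) says exactly that for each arrow $\alpha$ the only $\beta$ with $\beta\alpha\notin I_\Gamma$ is $\sigma^{-1}(\alpha)$ and the only $\gamma$ with $\alpha\gamma\notin I_\Gamma$ is $\sigma(\alpha)$, giving the second axiom.

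For symmetry I would exploit this basis. The $z_{\bar h}$ are precisely the basis elements annihilated on the left by every arrow, so each $\soc P_{\bar h}=k\,z_{\bar h}$ is one-dimensional. Define $f\colon A\to k$ to be the linear form equal to $1$ on each $z_{\bar h}$ and $0$ on every other basis element. Then $\ker f$ contains no nonzero one-sided ideal, since any nonzero submodule of $A$ meets some socle line $k\,z_{\bar h}$, on which $f$ does not vanish. The identity $f(pq)=f(qp)$ for basis paths $p,q$ reduces to showing that $pq$ equals some $z_{\bar h}$ if and only if $qp$ does: if $pq=z_{\bar h}$ then $pq$, as a path, is $C_\gamma^{\m(C_\gamma)}$ for one of the two special cycles $C_\gamma$ at the vertex $t(p)=s(q)$, and then $qp$ is the corresponding cyclic rotation $C_{\gamma'}^{\m(C_{\gamma'})}$ — using that $C_{\sigma(\gamma)}$ is the cyclic rotate of $C_\gamma$ and that $\m$ is constant along a $\sigma$-orbit — which is again a maximal path, i.e. some $z_{\bar h'}$.

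For part (2), by part (1) and \cite{WW} the algebra $A$ is special biserial, so there is an associated string algebra $\widehat A$ (drop the binomial relations (i)) whose string and band modules, together with finitely many further indecomposables, exhaust the indecomposable $A$-modules; hence $A$ is representation-finite if and only if $\widehat A$ admits no band. If $(\Gamma,\m)$ is a Brauer tree, no band can exist: any band induces a closed reduced walk on $\Gamma$, which — $\Gamma$ being a tree — must traverse some edge in both directions, and doing so in a reduced way requires a full turn around a non-truncated endpoint of that edge, which can be iterated into a band only in the presence of a second vertex of multiplicity $>1$; with at most one exceptional vertex this is impossible, so $A$ is representation-finite (recovering the classical representation-finiteness of Brauer tree algebras). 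Conversely, if $(\Gamma,\m)$ is not a Brauer tree I would produce a band: if $\Gamma$ contains a cycle, a band is obtained by running once around it and joining, at each vertex met, the appropriate arc of its special cycle; if $\Gamma$ is a tree but two vertices $u\neq v$ have $\m>1$, a band is built from $C_\alpha^{\m(C_\alpha)}$ and the formal inverse of $C_\beta^{\m(C_\beta)}$ ($C_\alpha,C_\beta$ the special cycles at $u,v$), joined along the path from $u$ to $v$, the hypothesis $\m>1$ ensuring the cyclic word is reduced and infinitely iterable. In either case $A$ is representation-infinite. The main obstacle is precisely this combinatorial step: verifying rigorously that the words proposed in the two non-tree cases are genuine bands, and that no band exists when $(\Gamma,\m)$ is a Brauer tree.
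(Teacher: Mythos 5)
This theorem is not proved in the paper at all: it is quoted as background, with the proof deferred to \cite[Section 2.5]{SS}, so your attempt can only be measured against the standard arguments in the literature rather than against an internal proof. With that caveat, your part (1) is essentially that standard argument and is sound: the basis of prefixes of the cycles $C_\alpha^{\m(C_\alpha)}$ together with the socle elements $z_{\bar h}$, admissibility (including the correct observation that a length-one special cycle sits at a non-truncated valency-one vertex and hence has multiplicity at least $2$), the two special biserial axioms read off from $\sigma$, and the socle-supported symmetric form with symmetry checked via cyclic rotation of special cycles. The one compression worth repairing is the non-degeneracy step: a nonzero one-sided ideal $J\subseteq\ker f$ contains a nonzero socle element $x=\sum_{\bar h}c_{\bar h}z_{\bar h}$, and you must multiply by an idempotent, $e_{\bar h_0}x=c_{\bar h_0}z_{\bar h_0}\in J$, to land on a single socle line before evaluating $f$; ``meets some socle line'' is exactly this and should be said explicitly.

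The genuine gap is in part (2), and you flag it yourself. Reducing to ``$A$ is representation-finite if and only if the associated string algebra has no band'' is legitimate (it is the string-and-band theory of \cite{WW}), but the two Brauer-graph-specific claims are only asserted. For the tree direction, the statement that a band forces a closed reduced walk on $\Gamma$, that on a tree such a walk must reverse direction along some edge, and that a reversal ``costs'' a full turn which can be iterated only in the presence of two vertices of multiplicity greater than one, needs an actual description of how strings of $kQ_\Gamma/I_\Gamma$ project to walks on $\Gamma$ and why a direction reversal at an edge consumes either valency or multiplicity; as written this is a heuristic, not an argument. For the converse, the cyclic words you propose must be checked to be genuine bands in the degenerate configurations (loops and multiple edges of $\Gamma$, cycles of length one or two, adjacent exceptional vertices): one must verify that the word is a reduced string, that every power avoids the relations of the string algebra, and that it is not itself a proper power. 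These verifications are essentially the whole content of part (2), so what you have is a correct outline of the standard proof rather than a proof; since the paper simply cites \cite{SS}, the efficient fix is either to carry out these combinatorial checks in full or to do as the paper does and invoke the reference.
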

	
	We conclude this section with an example of a Brauer graph algebra, which will also be used later in the paper.
	
	\begin{example}\label{exa:BGA-with-4-mult-edges}
		Consider a ribbon graph $\Gamma=(V,H,s,\iota,\rho)$ which is given by
		\begin{itemize}
			\item $V=\{v_1,\;v_2\};$
			\item $H=\{1,1',2,2',3,3',4,4'\}$.
			\item Define $s(i)=v_1$, $s(i')=v_2$ for $i=1,2,3,4$.
			\item The orbits of $\iota$ (i.e.\ the edges of $\Gamma$) are given by $\{i,i'\}$ with $i=1,2,3,4$.
			\item Define $\rho(1)=2$, $\rho(2)=3$, $\rho(3)=4$, $\rho(4)=1$, $\rho(1')=2'$, $\rho(2')=3'$, $\rho(3')=4'$, $\rho(4')=1'$.
		\end{itemize}
		This ribbon graph can also be represented as follows with clockwise cyclic orientations at each vertex.
		\begin{center}

			\tikzset{every picture/.style={line width=0.75pt}} 
			
			\begin{tikzpicture}[x=0.75pt,y=0.75pt,yscale=-1,xscale=1]
				
				\draw  [fill={rgb, 255:red, 0; green, 0; blue, 0 }  ,fill opacity=1 ] (140,105) .. controls (140,102.24) and (142.24,100) .. (145,100) .. controls (147.76,100) and (150,102.24) .. (150,105) .. controls (150,107.76) and (147.76,110) .. (145,110) .. controls (142.24,110) and (140,107.76) .. (140,105) -- cycle ;
				\draw  [fill={rgb, 255:red, 0; green, 0; blue, 0 }  ,fill opacity=1 ] (290,105) .. controls (290,102.24) and (292.24,100) .. (295,100) .. controls (297.76,100) and (300,102.24) .. (300,105) .. controls (300,107.76) and (297.76,110) .. (295,110) .. controls (292.24,110) and (290,107.76) .. (290,105) -- cycle ;
				\draw [line width=1.5]    (145,105) -- (295,105) ;
				\draw [line width=1.5]    (145,105) .. controls (64,49.5) and (160,49.5) .. (295,105) ;
				\draw [line width=1.5]    (145,105) .. controls (252,49.5) and (391,46.5) .. (295,105) ;
				\draw [line width=1.5]    (145,105) .. controls (159,32.5) and (274,34.5) .. (295,105) ;
				
				\draw (165,111) node [anchor=north west][inner sep=0.75pt]   [align=left] {$1$};
				\draw (99,72) node [anchor=north west][inner sep=0.75pt]   [align=left] {$2$};
				\draw (135,72) node [anchor=north west][inner sep=0.75pt]   [align=left] {$3$};
				\draw (163,74) node [anchor=north west][inner sep=0.75pt]   [align=left] {$4$};
				\draw (263,111) node [anchor=north west][inner sep=0.75pt]   [align=left] {$1'$};
				\draw (261,76) node [anchor=north west][inner sep=0.75pt]   [align=left] {$2'$};
				\draw (291,72) node [anchor=north west][inner sep=0.75pt]   [align=left] {$3'$};
				\draw (333,72) node [anchor=north west][inner sep=0.75pt]   [align=left] {$4'$};
				\draw (124,109) node [anchor=north west][inner sep=0.75pt]   [align=left] {$v_1$};
				\draw (306,109) node [anchor=north west][inner sep=0.75pt]   [align=left] {$v_2$};

			\end{tikzpicture}
			
		\end{center}
		Define $\m(v_1)=\m(v_2)=1$. Then the quiver $Q_\Gamma$ associated with $(\Gamma,\m)$ is given by 
$$
\begin{tikzcd}
	\bar 1 \arrow[r, "\alpha_1", shift left=2] \arrow[r, "\beta_1"', shift right] & \bar 2 \arrow[d, "\alpha_2", shift left=2] \arrow[d, "\beta_2"', shift right] \\
	\bar 4 \arrow[u, "\alpha_4", shift left=2] \arrow[u, "\beta_4"', shift right] & \bar 3 \arrow[l, "\beta_3"', shift right] \arrow[l, "\alpha_3", shift left=2]
\end{tikzcd}$$
and the ideal $I_\Gamma$ is generated by the relations
\begin{enumerate}[(i)]
	\item $\alpha_4\alpha_3\alpha_2\alpha_1=\beta_4\beta_3\beta_2\beta_1$, $\alpha_3\alpha_2\alpha_1\alpha_4=\beta_3\beta_2\beta_1\beta_4$, $\alpha_2\alpha_1\alpha_4\alpha_3=\beta_2\beta_1\beta_4\beta_3$, $\alpha_1\alpha_4\alpha_3\alpha_2=\beta_1\beta_4\beta_3\beta_2$;
	\item  $\alpha_4\alpha_3\alpha_2\alpha_1\alpha_4=\alpha_3\alpha_2\alpha_1\alpha_4\alpha_3=\alpha_2\alpha_1\alpha_4\alpha_3\alpha_2=\alpha_1\alpha_4\alpha_3\alpha_2\alpha_1=0$,\\ $\beta_4\beta_3\beta_2\beta_1\beta_4=\beta_3\beta_2\beta_1\beta_4\beta_3=\beta_2\beta_1\beta_4\beta_3\beta_2=\beta_1\beta_4\beta_3\beta_2\beta_1=0$;
	\item $\beta_1\alpha_4=\alpha_1\beta_4=\beta_2\alpha_1=\alpha_2\beta_1=\beta_3\alpha_2=\alpha_3\beta_2=\beta_4\alpha_3=\alpha_4\beta_3=0$.
\end{enumerate}
The Brauer graph algebra associated with $(\Gamma,\m)$ is given by $A=kQ_\Gamma/I_\Gamma$. The indecomposable projective $A$-modules are given by	$$P_1=\begin{array}{*{3}{lll}}
	&1&\\2&&2\\3&&3\\4&&4\\&1&
\end{array},\quad P_2=\begin{array}{*{3}{lll}}
	&2&\\3&&3\\4&&4\\1&&1\\&2&
\end{array},\quad P_3=\begin{array}{*{3}{lll}}
	&3&\\4&&4\\1&&1\\2&&2\\&3&
\end{array},\quad P_4=\begin{array}{*{3}{lll}}
	&4&\\1&&1\\2&&2\\3&&3\\&4&
\end{array}.$$
Therefore, $A$ is symmetric and special biserial.		
	\end{example}
	
		\subsection{Multiserial and special multiserial algebras}
		\
		\smallskip
		
In \cite{GS2}, the notions of biserial and special biserial algebras were extended to multiserial and special multiserial algebras. We recall their definitions as follows.
		\begin{definition}
	Let $A$ be a finite dimensional $k$-algebra.
	\begin{enumerate}[(1)]
		\item An indecomposable left (resp.\ right) $A$-module $M$ is called {\it multiserial} if $\rad(M)$ can be written as a sum of uniserial left (resp.\ right) modules $U_1,\cdots, U_n$ such that, if $i\neq j$, then $U_i\cap U_j$ is either zero or simple.
		
		\item The $k$-algebra $A$ is {\it multiserial} if every indecomposable projective left or right $A$-module is multiserial.
	\end{enumerate}
\end{definition}

	\begin{definition}
	A $k$-algebra $A$ is called {\it special multiserial} if $A$ is Morita equivalent to a quotient $kQ/I$ with $I$ admissible such that for every arrow $\alpha$ in $Q$, there is at most one arrow $\beta$ in $Q$ such that $\alpha\beta\notin I$ and at most one arrow $\gamma$ in $Q$ such that $\gamma\alpha\notin I$. 
\end{definition}

In fact, special multiserial algebras share properties analogous to those of special biserial algebras. We list some key results below.

\begin{theorem}\textnormal{(\cite[Corollary 2.4]{GS1})}
	Let $A$ be a special multiserial $k$-algebra. Then $A$ is multiserial.
\end{theorem}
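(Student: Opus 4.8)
The plan is to reduce to a concrete presentation and then dissect the radical of an indecomposable projective. Being multiserial is a Morita invariant property (it is phrased solely in terms of the radical series of the indecomposable projectives and their uniserial submodules), so we may assume $A=\qa$ with $I$ admissible satisfying the special multiserial condition. Since that condition is self-dual, $A^{\mathrm{op}}\cong kQ^{\mathrm{op}}/I^{\mathrm{op}}$ is again special multiserial, so it is enough to show that every indecomposable projective left module $P=Ae_i$ is multiserial. We start from the standard identification
\[
\rad(P)=\sum_{\alpha\in Q_1,\,s(\alpha)=i}A\alpha,
\]
a finite sum: $\rad(A)$ is generated by the arrows, so $\rad(P)$ is spanned by the images of paths of length $\ge 1$ starting at $i$, and every such path has the form $p\alpha$ for an arrow $\alpha$ out of $i$.

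The first step is to prove that each $A\alpha$ is uniserial. Right multiplication by $\alpha$ yields a surjection $Ae_{t(\alpha)}\to A\alpha$, so $A\alpha$ is a local module with top $S_{t(\alpha)}$. Because $I$ is an ideal, every consecutive subpath of a path outside $I$ also lies outside $I$; combined with the hypothesis that for each arrow $\gamma$ there is at most one arrow $\delta$ with $\delta\gamma\notin I$, an induction on length shows that for every $n\ge 1$ there is at most one path $w^\alpha_n$ of length $n$ with initial arrow $\alpha$ that is nonzero in $A$, and $w^\alpha_n$ is obtained from $w^\alpha_{n-1}$ by left-extension. Hence $\rad^{n-1}(A\alpha)/\rad^{n}(A\alpha)$ is generated by the image of $w^\alpha_n$, so is simple or zero for all $n$; a local module all of whose radical layers are simple or zero is uniserial. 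Writing $L_\alpha$ for the length of $A\alpha$, its socle is simple, spanned by the image of the (unique) longest nonzero path $w^\alpha:=w^\alpha_{L_\alpha}$ with initial arrow $\alpha$.

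The remaining step, which I expect to be the main obstacle, is to show that for distinct arrows $\alpha\neq\beta$ out of $i$ the submodule $A\alpha\cap A\beta$ is zero or simple; granting this, $\rad(Ae_i)=\sum_\alpha A\alpha$ exhibits $Ae_i$ as a finite sum of uniserial submodules with pairwise intersections zero or simple, which is precisely the definition of $Ae_i$ being multiserial — and applying the same to $A^{\mathrm{op}}$ takes care of the indecomposable projective right modules, so $A$ is multiserial. Being an intersection of two uniserial modules, $A\alpha\cap A\beta$ is itself uniserial, so it suffices to prove the inclusion $A\alpha\cap A\beta\subseteq\soc(A\alpha)$. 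The key input is a structural fact about the relations of a special multiserial algebra: $I$ admits a generating set consisting of monomial relations together with binomial relations $p-\lambda q$ in which both $p$ and $q$ are \emph{maximal}, i.e.\ cannot be left-extended to a nonzero path. This is where the full strength of the definition is used: if $p-\lambda q\in I$ with $\overline p,\overline q\neq 0$ and $\delta p\notin I$ for some arrow $\delta$, then also $\delta q\notin I$, and since distinct identified nonzero paths necessarily have distinct last arrows, the last arrows of $p$ and $q$ would be two distinct arrows $\mu$ with $\delta\mu\notin I$, contradicting the special multiserial condition. Consequently no binomial rewrite applies to a non-maximal path, so the images in $A$ of the non-maximal nonzero paths are linearly independent. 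Now if $x\in A\alpha\cap A\beta$ lay outside $\soc(A\alpha)$, expanding $x$ along the chains $w^\alpha_1,\dots,w^\alpha_{L_\alpha}$ and $w^\beta_1,\dots,w^\beta_{L_\beta}$ would produce a linear dependence in $A$ in which some non-maximal $w^\alpha_c$ ($c<L_\alpha$) occurs with nonzero coefficient; but the paths $w^\alpha_c$ ($c<L_\alpha$) and $w^\beta_m$ ($m<L_\beta$) are pairwise distinct non-maximal paths (the two families have different initial arrows), hence linearly independent in $A$, forcing that coefficient to be zero — a contradiction. Therefore $A\alpha\cap A\beta\subseteq\soc(A\alpha)$ is zero or simple, which completes the proof.
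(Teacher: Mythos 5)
The paper itself gives no proof of this statement (it is quoted from Green--Schroll), so your attempt can only be measured against the standard argument; your reduction to a presentation $\qa$, the identification $\rad(Ae_i)=\sum_{s(\alpha)=i}A\alpha$, and the proof that each $A\alpha$ is uniserial are correct and follow that standard route. The gap is in the intersection step, and it is twofold. First, your structural claim about $I$ is false: a special multiserial ideal need \emph{not} admit a generating set consisting of monomials and binomials $p-\lambda q$. Take $Q$ with vertices $1,2,3$, arrows $x_1,x_2,x_3\colon 1\to 2$ and $y_1,y_2,y_3\colon 2\to 3$, and $I$ generated by the six mixed paths $y_jx_i$ ($i\neq j$) together with $y_1x_1+y_2x_2+y_3x_3$. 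This $I$ is admissible, the special multiserial condition holds (for each $x_i$ only $y_ix_i\notin I$ and conversely), yet no two of the paths $y_ix_i$ are proportional modulo $I$, so $I$ has no monomial-plus-binomial generating set: genuine three-term relations occur, and your argument only addresses binomial generators. (Also, the auxiliary assertion that distinct nonzero paths identified in $A$ have distinct last arrows is true but not free: if they shared a last arrow, uniqueness of successors forces one to be the other composed with a nontrivial cycle, and you need a nilpotency/Nakayama argument to exclude that.)

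Second, even granting the (true) statement that the images of the non-maximal nonzero paths are linearly independent, your final deduction is a non sequitur: the dependence obtained by expanding $x$ along the two chains also contains the maximal paths $w^{\alpha}_{L_\alpha}$ and $w^{\beta}_{L_\beta}$ with possibly nonzero coefficients, so independence of the non-maximal family alone does not force $\lambda_c=0$. What you actually need is the stronger fact that in any vanishing combination of distinct paths not in $I$, every non-maximal path has coefficient zero. This is true, and can be proved directly: if a non-maximal term $w$ appears, choose an arrow $\delta$ with $\delta w\notin I$ and multiply the relation by $\delta$; all maximal terms are killed, all surviving paths share the last arrow $\delta$, hence by uniqueness of predecessors they form a chain $\delta w_0,\ \delta w_0z_2,\dots$ with the $z_i$ nontrivial cycles at the common source $v$, so the relation reads $\overline{\delta w_0}\,(c\,e_v+r)=0$ with $c\neq 0$ and $r\in e_v\rad(A)e_v$; since $c\,e_v+r$ is invertible in $e_vAe_v$, this gives $\overline{\delta w_0}=0$, a contradiction. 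Alternatively one can bypass relations entirely, as Green--Schroll essentially do: if $0\neq x\in A\alpha\cap A\beta$ lies outside $\soc(A\alpha)$, then $Ax$ is generated by both $\overline{w^{\alpha}_{c_0}}$ and $\overline{w^{\beta}_{m_0}}$, some arrow $\delta$ left-extends both, the special condition forces these two paths to have the same last arrow, hence one equals the other composed with a nontrivial cycle, and nilpotency of that cycle contradicts Nakayama. As written, however, your proof of the key step that $A\alpha\cap A\beta$ is zero or simple is incomplete.
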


	\begin{theorem}\textnormal{(\cite{GS3})}
	Every special multiserial algebra is a quotient of a symmetric special multiserial algebra.
\end{theorem}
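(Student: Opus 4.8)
The plan is to follow the strategy of Theorem~\ref{quotient} for special biserial algebras (\cite[Theorem 1.4]{WW}), replacing Brauer graph algebras by Brauer configuration algebras throughout. By \cite{GS2} an algebra is symmetric special multiserial if and only if it is a Brauer configuration algebra, so it suffices to produce, for a given special multiserial algebra $A$, a Brauer configuration algebra having $A$ as a quotient. A standard Morita reduction — using that both being special multiserial and being symmetric are Morita invariant — lets us assume $A=kQ/I$ is basic and connected with $I$ admissible. The defining condition on $I$ provides a partial function on $Q_1$ sending an arrow $\alpha$ to the unique arrow $\beta$ with $\alpha\beta\notin I$ when it exists; the two-sided ``at most one'' hypothesis makes this a partial bijection $\tau$ of $Q_1$, whose orbits are either finite \emph{threads} $(\alpha_1,\dots,\alpha_r)$ with $\tau(\alpha_i)=\alpha_{i+1}$ and non-extendable at both ends, or genuine cycles.

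Next I would build the symmetric cover. Enlarge $Q$ to a quiver $\widehat Q$ by adjoining, for each finite thread $\alpha_r\cdots\alpha_1$ regarded as a path from $s(\alpha_1)$ to $t(\alpha_r)$, a single closing arrow $\lambda\colon t(\alpha_r)\to s(\alpha_1)$, and extend $\tau$ to an honest permutation $\widehat\tau$ of $\widehat Q_1$ by inserting each $\lambda$ between the two ends of its thread. Every $\widehat\tau$-orbit is then a cycle, giving ``special cycles'' $\widehat C_\alpha$. Choose, for each $\widehat\tau$-orbit, a multiplicity larger than the length of any nonzero path of $A$, and let $\widehat I$ be generated by the Brauer-configuration relations: $\widehat C_\alpha^{\,m}=\widehat C_\beta^{\,m}$ whenever $\alpha,\beta$ end at the same vertex (with $m$ the multiplicity of the relevant orbit), the monomial relations $\widehat C_\alpha^{\,m}\alpha=0$, and $\alpha\beta=0$ whenever $\alpha,\beta$ are composable but $\beta\neq\widehat\tau(\alpha)$. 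One then checks that $(\widehat Q,\widehat\tau,\text{multiplicities})$ encodes a legitimate (connected) Brauer configuration, so that $\widehat A:=k\widehat Q/\widehat I$ is a Brauer configuration algebra, hence finite dimensional, symmetric and special multiserial.

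Finally, let $J\subseteq\widehat A$ be the ideal generated by the closing arrows $\lambda$ together with lifts of a generating set of $I$, and show $\widehat A/J\cong A$. The composite $kQ\hookrightarrow k\widehat Q\twoheadrightarrow\widehat A/J$ is surjective with kernel containing $I$, hence factors through a surjection $A\twoheadrightarrow\widehat A/J$; the real content is that this surjection is injective, which I would establish by a basis comparison, showing that the nonzero paths of $A$ stay linearly independent in $\widehat A/J$. This is exactly where the large multiplicities are used: they force the configuration relations, restricted to paths not involving a closing arrow, to reduce precisely to ``$\alpha\beta=0$ for $\beta\neq\tau(\alpha)$'' together with the thread-closure identifications, so that killing the $\lambda$'s leaves only the relations of $I$ on the $Q$-part. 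I expect the main obstacle to be this basis comparison — in particular accommodating possibly non-monomial (binomial) generators of $I$ and simultaneously checking that $\widehat A$ is finite dimensional and that the Brauer configuration is well defined; once these are in place, the argument is a direct transcription of the special biserial proof of \cite{WW}.
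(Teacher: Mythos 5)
Your outline is essentially sound, but note that this statement is quoted in the paper from \cite{GS3} without proof, and the route taken there is genuinely different: Green and Schroll prove that the trivial extension $T(A)=A\ltimes D(A)$ of a special multiserial algebra $A$ is itself (symmetric) special multiserial, so $A\cong T(A)/D(A)$ exhibits $A$ as a quotient at once, with a canonical symmetric cover and no choices of multiplicities or auxiliary arrows. What you propose is instead a transcription of the Wald--Wasch\-b\"usch completion of \cite{WW}, closing the finite $\tau$-threads by new arrows and imposing Brauer-configuration relations with large multiplicities; this is much closer in spirit to the present paper's own proof of Theorem \ref{ssqba quotient} for special quasi-biserial algebras, where the gluing of tracks has to be more delicate because of the bound of two arrows per vertex --- a constraint absent in the multiserial setting, which is why your one-closing-arrow-per-thread shortcut is legitimate. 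Your approach buys explicit combinatorial control: the cover's quiver is $Q$ plus one arrow per thread, and the large-multiplicity trick does make the basis comparison work (any path of $\widehat Q$ supported in $kQ$ arising from the new relations has length at least the nilpotency index of $\rad(A)$, hence already lies in $I$, and monomials containing a closing arrow cannot cancel against monomials in $kQ$). The debts you flag are real but minor: you must package $(\widehat Q,\widehat\tau,\m)$ as a genuine Brauer configuration in the sense of \cite{GS2} --- quiver vertices crossed exactly once by a single cycle give one-element polygons, which are repaired by adjoining truncated vertices, exactly as in the paper's proof of Theorem \ref{ssqb is sf-BGA} --- and your choice of multiplicities above the nilpotency index conveniently ensures no cycle is truncated. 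In short: \cite{GS3}'s trivial-extension argument is shorter and choice-free, while your construction generalizes the classical special biserial proof and parallels the methods this paper uses for its quasi-biserial analogue.
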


In \cite{GS1}, the authors show that symmetric special multiserial algebras also admit a combinatorial description, which is called Brauer configuration algebras. We omit a detailed discussion of Brauer configuration algebras and instead briefly state a result similar to the special biserial case.

\begin{theorem}\textnormal{(\cite[Theorem 4.1]{GS1})}
	Let $A=\qa$ be a finite-dimensional $k$-algebra with $k$ algebraically closed. Then $A$ is a symmetric special multiserial algebra if and only if $\qa$ is a Brauer configuration algebra.
\end{theorem}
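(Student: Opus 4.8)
The plan is to follow the template Schroll established for the biserial case (Theorem~\ref{ssbBGA}), replacing "two uniserial summands" by "arbitrarily many uniserial summands" at each step. The "if" direction is routine and parallels Theorem~\ref{thm:BTA-RFS}(1): given a Brauer configuration $(\Gamma,\m)$, one reads off directly from the construction of $Q_\Gamma$ and $I_\Gamma$ that at every arrow $\alpha$ there is at most one arrow $\beta$ with $\alpha\beta\notin I_\Gamma$ (namely $\sigma(\alpha)$) and at most one $\gamma$ with $\gamma\alpha\notin I_\Gamma$, so $kQ_\Gamma/I_\Gamma$ is special multiserial; symmetry is witnessed by the linear form sending each maximal nonzero path $C_\alpha^{\m(C_\alpha)}$ to $1$ and every other basis path to $0$, and one checks this form is symmetric with kernel containing no nonzero one-sided ideal.

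For the "only if" direction, start from $A=kQ/I$ symmetric and special multiserial; since $I$ is admissible, $A$ is basic, and passing to connected components reduces to $Q$ connected. First I would extract the combinatorial skeleton. The special multiserial condition gives a partial "successor" assignment on arrows; using that $A$ is symmetric — so each indecomposable projective $P_i$ is also injective and $\soc(P_i)\cong P_i/\rad(P_i)=S_i$ is \emph{simple} — one upgrades this to a genuine permutation $\sigma$ on $Q_1$ with no truncated arrows, whose orbits $C_\alpha=\alpha\sigma(\alpha)\cdots$ are cycles. Grouping the arrows by source vertex together with the $\sigma$-orbits then furnishes the polygons and cyclic orderings of a Brauer configuration $\Gamma$ whose polygons are indexed by the vertices of $Q$. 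Next I would define the multiplicity $\m(C_\alpha)$ as the number of times $C_\alpha$ wraps around before landing in the socle; this is well defined precisely because $\soc(P_{t(\alpha)})$ is one-dimensional, so the (possibly many) uniserial summands of $\rad(P_{t(\alpha)})$ all terminate in the same simple module. Finally I would verify $A\cong kQ_\Gamma/I_\Gamma$: the quivers agree by construction; the monomial relations $\alpha\beta=0$ for $\sigma(\alpha)\neq\beta$ and $C_\alpha^{\m(C_\alpha)}\alpha=0$ hold in $A$ by the special multiserial condition and finite dimensionality; and the binomial relations $C_\alpha^{\m(C_\alpha)}=C_\beta^{\m(C_\beta)}$ for $\alpha,\beta$ ending at a common vertex hold because both sides are nonzero elements of the one-dimensional socle of $P_{t(\alpha)}$, hence proportional — after rescaling the arrows one normalizes all these scalars to $1$. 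A dimension count (both algebras have the same $k$-dimension, computed from the same combinatorial data) promotes the resulting surjection $kQ_\Gamma/I_\Gamma\twoheadrightarrow A$ to an isomorphism.

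The main obstacle is the passage from the purely local "at most one successor/predecessor" condition to a genuine global permutation on $Q_1$ with no truncated arrows, together with the well-definedness of $\m$ — equivalently, proving that a symmetric special multiserial algebra really does have projectives whose radical is a sum of uniserial modules pairwise intersecting exactly in the common simple socle, and that the socle identifications are the only non-monomial relations. This is exactly the step where symmetry (self-injectivity plus simplicity of socles of indecomposable projectives) must be used in an essential way — without it one can have extra relations and truncated arrows — and it demands the most care. The rescaling of arrows to normalize the binomial scalars and the final dimension count are then bookkeeping.
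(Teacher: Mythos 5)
The paper does not prove this statement at all: it is imported verbatim from \cite[Theorem 4.1]{GS1}, so there is no in-paper proof to compare against. Your sketch follows the same route as the Green--Schroll argument, which is also exactly the template the author adapts in Section \ref{sec:sym-sqb-4} to prove the quasi-biserial analogue (Theorem \ref{ssqb is sf-BGA}): both directions, the extraction of the permutation $\sigma$ and its orbits, the definition of the multiplicities via the socle, the normalization of scalars, and the final identification of the two algebras all proceed as in the literature. The step you single out as the ``main obstacle'' is indeed where the work lies, and it is resolved just as you guess: symmetry gives self-injectivity, and when $\rad^2(A)\neq 0$ the socle is arrow-free (Proposition \ref{prop:self-socfree}), so every arrow has at least one successor and predecessor not in $I$ (Proposition \ref{soc-free}); combined with the special multiserial condition this gives exactly one, hence a genuine permutation with no truncated arrows, while $\rad^2(A)=0$ is handled separately as a symmetric Nakayama algebra --- an edge case your sketch omits. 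The normalization of the binomial scalars is where the hypothesis that $k$ is algebraically closed enters, via $\m$-th roots of the values $\psi(C_\alpha^{\m(C_\alpha)})$ of the symmetrizing form (compare Proposition \ref{prop:quiver-relations} and Lemma \ref{p=p'}), and the final identification is done in the paper's analogue by matching $k$-bases of the indecomposable projectives (Lemma \ref{k-basis of proj}) rather than by a dimension count, though either piece of bookkeeping works. So your proposal is correct in outline and is essentially the standard approach, with its hardest step acknowledged but not carried out.
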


While special biserial algebras are always of tame representation type, special multiserial algebras can in general be wild.

\begin{example}\label{exa:almost-gentle-der}
	Let us consider the path algebra $kQ$ where the quiver $Q$ is defined as follows.
	$$\begin{tikzcd}
	Q: & 1 & 2 \arrow[l] \arrow[r, shift left=2] \arrow[r, shift right] & 3 &  & Q': & 1 \arrow[r] & 2 \arrow[r, shift left=2] \arrow[r, shift right] & 3
\end{tikzcd}$$
Then $kQ$ is special multiserial and of wild representation type (see e.g.\ \cite{ASS}). Actually, $kQ$ is an almost gentle algebra which is defined in \cite{GS4}. According to \cite{BGP}, the path algebras $kQ$ and $kQ'$ are derived equivalent. We note that $kQ'$ is not special multiserial.
	
\end{example}
	
	\section{Quasi-biserial and special quasi-biserial algebras}\label{sec:qb-sqb-2}

	\subsection{Quasi-biserial algebras}
	\
	\smallskip
	
	In this subsection, we construct a generalization of biserial algebras, which we call quasi-biserial algebras.
	
	\begin{definition}\label{qb} Let $A$ be a finite dimensional $k$-algebra.
		\begin{enumerate}[(1)]
			\item An indecomposable left (resp.\ right) $A$-module $M$ is called {\it quasi-biserial} if it is not uniserial and there exists a positive integer $m$ and uniserial left (resp.\ right) modules $U$ and $V$, such that $\rad^m(M)=U+V$ and $M/\rad^m(M)$ is uniserial.
			
			\item The $k$-algebra $A$ is {\it quasi-biserial} if for every indecomposable projective left or right module $P$, $P$ is either uniserial or quasi-biserial.
		\end{enumerate}
	
		\end{definition}
		
			We demonstrate that quasi-biserial algebras can also arise naturally in the representation theory of finite groups, as illustrated by the following example.
		
		\begin{example}\label{exa:group-algebra}
		Consider an algebra of quaternion type (see \cite[Theorem VII.7.1]{E}) given by the quiver
			$$
\begin{tikzcd}
1 \arrow["\alpha"', loop, distance=2em, in=215, out=145] \arrow[r, "\beta", shift left] & 2 \arrow[l, "\gamma", shift left]
\end{tikzcd}
$$ with relations $$\gamma\beta\gamma=\alpha\gamma\beta\alpha\gamma,\; \beta\gamma\beta=\beta\alpha\gamma\beta\alpha,\;\alpha^2=\gamma\beta\alpha\gamma\beta,\;\beta\alpha^2=0.$$
The indecomposable projective modules are given by $$P_1=\begin{array}{*{3}{lll}}
	&1&\\&&2\\1&&1\\&&1\\&&2\\&1&\\&1&
\end{array},\quad P_2=\begin{array}{*{3}{lll}}
	&2&\\&1&\\&&1\\2&&2\\&&1\\&&1\\&2&
\end{array}.$$
It follows that this algebra is quasi-biserial but not biserial.
		\end{example}
		
		The following proposition establishes a relationship between biserial and quasi-biserial algebras.
		
		\begin{proposition}\label{prop:mult-cap-qbi}
			Let $A$ be a finite dimensional $k$-algebra. If $A$ is both multiserial and quasi-biserial, then $A$ is biserial.
		\end{proposition}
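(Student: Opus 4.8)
\ The plan is to handle one indecomposable projective at a time and to reduce everything to counting the number of simple summands of the semisimple module $\rad P/\rad^2 P$, showing this number is at most $2$. Since the hypotheses and the conclusion are left--right symmetric, it suffices to treat indecomposable projective left modules, so I fix one such module $P$. If $P$ is uniserial there is nothing to prove, so assume $P$ is not uniserial; then $P$ is quasi-biserial, because $A$ is. Since $A$ is multiserial, I write $\rad P=U_1+\cdots+U_n$ with each $U_i$ uniserial and with $n$ chosen \emph{minimal} (so $n\ge 1$, as $\rad P\ne 0$). The goal then becomes to prove $n\le 2$, after which the multiserial hypothesis supplies the required intersection condition automatically.

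The first step is to show that the minimality of $n$ forces the images $\bar U_1,\dots,\bar U_n$ of the $U_i$ in $\rad P/\rad^2 P$ to give a \emph{direct} decomposition into simple submodules. The mechanism is Nakayama's lemma applied to the module $\rad P$, whose radical is $\rad^2 P$: if some $U_j$ were contained in $\sum_{i\ne j}U_i+\rad^2 P$, then $\rad P=\sum_{i\ne j}U_i+\rad(\rad P)$, forcing $\rad P=\sum_{i\ne j}U_i$ and contradicting minimality. Hence no $\bar U_j$ lies in the sum of the others. Since each $\bar U_i$ is a semisimple quotient of the uniserial module $U_i$, it is zero or simple, and the non-containment just established shows it is nonzero; therefore $\rad P/\rad^2 P\cong\bigoplus_{i=1}^n\bar U_i$ has exactly $n$ simple summands.

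The second step bounds that number by invoking quasi-biseriality. I write $\rad^m P=U+V$ with $U,V$ uniserial and $P/\rad^m P$ uniserial, for some minimal $m\ge 1$. If $m=1$, then $\rad P=U+V$ is generated by at most two elements, so the semisimple module $\rad P/\rad^2 P$ is a quotient of $U/\rad U\oplus V/\rad V$ and has at most two simple summands. If $m\ge 2$, then $\rad P/\rad^2 P$ is one of the Loewy layers of the uniserial module $P/\rad^m P$ and hence is simple. In both cases $\rad P/\rad^2 P$ has at most two simple summands, so the first step gives $n\le 2$.

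It remains to read off the conclusion. If $n=2$, then $\rad P=U_1+U_2$ with $U_1,U_2$ uniserial and, by the multiserial hypothesis, $U_1\cap U_2$ zero or simple, so $P$ is biserial. If $n=1$, then $\rad P=U_1$ is uniserial and nonzero, and $\rad P=U_1+\soc(U_1)$ exhibits $P$ as biserial, using that $P$ itself is not uniserial. The same argument applies to right modules, completing the proof. I expect the only genuinely delicate point to be the first step: the two hypotheses each produce a decomposition of $\rad P$ into uniserials, and these are a priori unrelated, so the argument hinges on the observation that a \emph{minimal} such decomposition is automatically ``top-independent'', which is exactly what lets the bound coming from quasi-biseriality control the one coming from multiseriality.
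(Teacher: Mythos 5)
Your proof is correct and takes essentially the same approach as the paper: both arguments bound the number of uniserial summands in a decomposition of $\rad P$ by the number of simple summands of $\rad P/\rad^2 P$, which the quasi-biserial hypothesis caps at two, and then invoke the multiserial intersection condition. Your write-up is in fact more careful than the paper's (the minimality/Nakayama step making the count exact, the case split on $m$, and the $n=1$ case are all glossed over there); just make sure the minimal decomposition is chosen among those already satisfying the pairwise-intersection condition, so that the $n=2$ case closes exactly as you state.
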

		
		\begin{proof}
			Let $P$ be an indecomposable projective $A$-module. Since $A$ is multiserial, the radical of $P$ admits a decomposition $$\rad(P)=U_1+\cdots+U_n$$ for some uniserial submodules $U_1,\cdots,U_n$. Consequently, the quotient $\rad(P)/\rad^2(P)$ decomposes into $n$ simple summands. On the other hand, the quasi-biserial property of $A$ implies that $\rad(P)/\rad^2(P)$ has at most two simple summands. Thus $n\leq 2$, providing that $P$ is biserial. It follows that $A$ itself is biserial.
		\end{proof}
		
		\subsection{Special quasi-biserial algebras}
		\
		\smallskip
		
		We now define special quasi-biserial algebras as those quasi-biserial algebras that admits a presentation as a bound quiver algebra $\qa$ where the quiver $Q$ and the admissible ideal $I$ satisfy some `nice' properties. Before this, we give some notations for quivers whose vertices have at most two incoming arrows and at most two outgoing arrows.
		
		\begin{definition}
			Let $Q$ be a finite quiver. Assume that for each vertex $i$ in $Q_0$, there are at most two arrows starting at $i$ and there are at most two arrows ending at $i$. Then for each arrow $\alpha\in Q_1$,
			\begin{itemize}
				\item we call $\alpha$ a {\it non-single arrow} if there exist another $\beta\in Q_1$ such that $s(\beta)=s(\alpha)$ or $t(\beta)=t(\alpha)$;
				
				\item we call $\alpha$ a {\it single arrow} if $\alpha$ is not a non-single arrow.
			\end{itemize}
		\end{definition}
		
		\begin{example}
			Let $Q$ be the quiver as follows.
			$$
			\begin{tikzcd}
				\bullet \arrow[r, "\alpha_1"] & \bullet \arrow[r, "\alpha_2"]                        & \bullet                        &                                                        & \bullet \arrow[ld, "\alpha_7"']                                                         &                                 \\
				&                                                      &                                & \bullet \arrow[lu, "\alpha_5"'] \arrow[rd, "\alpha_8"] &                                                                                         & \bullet \arrow[lu, "\alpha_9"'] \\
				\bullet                       & \bullet \arrow[l, "\alpha_3"'] \arrow[r, "\alpha_4"] & \bullet \arrow[ru, "\alpha_6"] &                                                        & \bullet \arrow[ru, "\alpha_{10}", shift left] \arrow[ru, "\alpha_{11}"', shift right=2] &                                
			\end{tikzcd}$$
			Then only $\alpha_1$ and $\alpha_9$ are single arrows.
		\end{example}
		
		We begin to give the precise definition of special quasi-biserial algebras.
		
		\begin{definition}\label{sqb}
	A $k$-algebra $A$ is called {\it special quasi-biserial} if it is Morita equivalent to an algebra of the form $\qa$ where $kQ$ is a path algebra and $I$ is an admissible ideal such that the following properties hold
	
	\begin{enumerate}[(1)]
		\item At every vertex $i$ in $Q$, there are at most two arrows starting at $i$ and there are at most two arrows ending at $i$.
		
		\item For each path $p$ containing at least one non-single arrow in $Q$, there exists at most one arrow $\beta$ such that $\beta p\notin I$ and there exists at most one arrow $\gamma$ such that $p\gamma\notin I$.
	\end{enumerate}
			\end{definition}
	
	Note that the path algebra $kQ'$ in Example \ref{exa:almost-gentle-der} is special quasi-biserial and of wild representation type. The following proposition establishes a relationship between special biserial and special quasi-biserial algebras.
	
	\begin{proposition}\label{prop:special-mult-cap-qbi}
		Let $A=kQ/I$ be a finite dimensional $k$-algebra. If $A$ is both special multiserial and special quasi-biserial, then $A$ is special biserial.
	\end{proposition}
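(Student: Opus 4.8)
The plan is to observe that the relation condition defining a special multiserial algebra is, read off arrow by arrow, exactly condition (2) in the definition of a special biserial algebra, so that the special quasi-biserial hypothesis is needed only to supply condition (1). The argument is thus bookkeeping about presentations rather than any module-theoretic computation.

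First I would fix the Gabriel quiver. Since $A=kQ/I$ with $I$ admissible, $A$ is basic and $Q$ is its Gabriel quiver, an invariant of the Morita equivalence class of $A$; moreover the properties ``special biserial'', ``special multiserial'' and ``special quasi-biserial'' are Morita-invariant by definition, so we may freely switch between presentations $kQ/J$ of $A$ with $J$ admissible. Because $A$ is special quasi-biserial it is Morita equivalent to some $kQ'/I'$ whose quiver $Q'$ has at most two arrows starting and at most two arrows ending at each vertex; but $Q'$ is again the Gabriel quiver of $A$, so $Q'\cong Q$, and hence $Q$ itself satisfies condition (1) of the special biserial definition, for every presentation over $Q$. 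Because $A$ is special multiserial it is Morita equivalent to some $kQ/I''$ (its quiver identified with $Q$ as above) such that for each arrow $\alpha$ there is at most one arrow $\beta$ with $\alpha\beta\notin I''$ and at most one arrow $\gamma$ with $\gamma\alpha\notin I''$. With the right-to-left composition convention of this paper, $\alpha\beta\notin I''$ says $\beta$ may be the immediate predecessor of $\alpha$ in a nonzero path and $\gamma\alpha\notin I''$ says $\gamma$ may be its immediate successor; quantified over all arrows this is precisely condition (2) of the special biserial definition (with $\beta$ and $\gamma$ in swapped roles). Therefore $kQ/I''$ satisfies both (1) and (2), so it is special biserial, and hence so is $A\cong kQ/I''$.

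The step that needs care, and where a referee is most likely to push back, is the passage between the two a priori unrelated presentations furnished by the two hypotheses; it is resolved entirely by uniqueness of the Gabriel quiver, with no comparison of relations, since condition (1) is a property of the quiver alone while condition (2) is supplied verbatim by the special multiserial presentation. I would also note --- as a sanity check rather than part of the proof --- that condition (2) in the definition of special quasi-biserial algebras is never used: it is a weakening of the condition (2) obtained above, which is consistent with the conclusion being strictly stronger than the special quasi-biserial hypothesis.
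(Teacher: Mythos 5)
Your proposal is correct and follows essentially the same route as the paper: the special quasi-biserial hypothesis supplies condition (1) of the special biserial definition (at most two arrows in and out of each vertex), while the special multiserial hypothesis supplies condition (2) verbatim. The only difference is that you explicitly justify combining the two presentations via uniqueness of the Gabriel quiver, a point the paper's two-line proof leaves implicit.
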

	
	\begin{proof}
		Since $A$ is special quasi-biserial, for each vertex $i$ in $Q$, there are at most two arrows starting at $i$ and there are at most two arrows ending at $i$. On the other hand, the special multiserial property of $A$ providing that $A$ is special biserial.
	\end{proof}
	
	\subsection{Properties of special quasi-biserial algebras}
	\
	\smallskip
	
	In this section, we show that special quasi-biserial algebras share properties analogous to those of special biserial algebras.
	
	\begin{theorem}\label{sqb is qb}
		Let $A=\qa$ be a special quasi-biserial $k$-algebra. Then $A$ is a quasi-biserial algebra. 
	\end{theorem}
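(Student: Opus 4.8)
The plan is to analyze each indecomposable projective left module $P = Ae_i$ (the right-module case is symmetric) and show it is either uniserial or quasi-biserial in the sense of Definition \ref{qb}. Fix a vertex $i$ of $Q$. By condition (1) of Definition \ref{sqb}, there are at most two arrows $\alpha, \alpha'$ starting at $i$, so $\rad(P)/\rad^2(P)$ has at most two simple summands and $\rad(P) = A\alpha + A\alpha'$ (read as submodules of $P$ generated by the arrows out of $i$, with the convention that one or both may be absent). If there is at most one arrow starting at $i$, then $P$ is uniserial by a standard argument using condition (2) applied to single arrows: each arrow extends in at most one way, so $P$ has a unique composition series; we are done in that case. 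So assume there are exactly two arrows $\alpha, \alpha'$ out of $i$.

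The heart of the argument is to locate an integer $m$ so that $\rad^m(P)$ splits as a sum of two uniserial submodules while $P/\rad^m(P)$ is uniserial. First I would handle the case where at least one of $\alpha, \alpha'$ is a single arrow — this cannot happen, since having two arrows out of $i$ makes both of them non-single by definition; so both $\alpha$ and $\alpha'$ are non-single arrows. The key observation is then: walk out along $\alpha$. At each step we have a path $p$ ending at $\alpha$ (hence containing a non-single arrow), and by condition (2) there is at most one arrow $\beta$ with $\beta p \notin I$; thus the submodule $Ap \subseteq P$ generated by (the residue of) $p$ is \emph{uniserial} once it is entirely beyond the "branching at $i$". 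Concretely, I claim $A\alpha$ and $A\alpha'$ are each uniserial submodules of $P$: starting from $\alpha$, every longer nonzero path through $\alpha$ is forced to be unique by condition (2), since such a path always contains the non-single arrow $\alpha$. Hence $\rad(P) = A\alpha + A\alpha'$ with both summands uniserial. The only remaining issue is that $A\alpha \cap A\alpha'$ need not be zero or simple — this is exactly why $A$ need not be biserial. To fix this, I would take $m$ to be the smallest integer such that $\rad^{m}(P) \subseteq A\alpha + A\alpha'$ \emph{and} the two uniserial "legs" have separated, i.e. descend far enough down each of the two uniserial series $A\alpha \supseteq \rad A\alpha \supseteq \cdots$ and $A\alpha' \supseteq \cdots$ so that below level $m$ the module visibly decomposes; more precisely, take $m$ large enough that $\rad^{m}(P) = \rad^{m-1}(A\alpha) + \rad^{m-1}(A\alpha') =: U + V$ with $U, V$ uniserial. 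This $U + V$ is automatically a sum of two uniserials. It then remains to check $P/\rad^m(P)$ is uniserial — and this is where I expect the real work: for $m$ chosen minimally as above one must argue that the quotient, which at the top has the single simple $S_i$ and then the (at most two) simples from $\rad/\rad^2$, nonetheless has a unique composition series because the branching "closes up" — i.e. the two legs $A\alpha$, $A\alpha'$ reconverge, forced by conditions (1)–(2) together with finite-dimensionality. I would make this precise by tracking the (at most two) arrows into and out of each vertex met by the two legs and showing that any composition factor of $P/\rad^m(P)$ other than those in the two legs' overlap region is uniquely placed.

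The main obstacle is exactly this last point: proving $P/\rad^m(P)$ is uniserial for a well-chosen $m$, rather than merely "close to uniserial". The delicate part is choosing $m$ correctly — too small and the quotient is not uniserial (it still sees the branch), too large and $\rad^m(P)$ may fail to be a sum of \emph{two} uniserials (the two legs may have rejoined into one uniserial tail, or one leg may have died). I would argue that the correct $m$ is the length of the longest common "head", i.e. the largest $m$ with $P/\rad^m(P)$ uniserial, and then show $\rad^m(P)$ is at that stage exactly $U + V$ with $U \cap V$ controlled, using: (a) condition (1) bounds the number of simple summands of each $\rad^j(P)/\rad^{j+1}(P)$, (b) condition (2) forces uniqueness of continuation along any path hitting a non-single arrow, so that once we are past the initial branching the structure is rigid, and (c) finite-dimensionality guarantees $m$ exists. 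One should also treat the degenerate sub-cases (one leg is already simple, or a loop at $i$, or $\alpha$ and $\alpha'$ sharing further structure) separately but these are routine. Finally, symmetry of the hypotheses in Definition \ref{sqb} under $Q \mapsto Q^{\mathrm{op}}$ handles the right-module case verbatim, completing the proof that $A$ is quasi-biserial.
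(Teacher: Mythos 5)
There are two genuine problems with your argument. First, your dismissal of the case where at most one arrow starts at $i$ is wrong, and this is precisely the case that carries the content of the theorem. Condition (2) of Definition \ref{sqb} only controls continuations of paths that contain a \emph{non-single} arrow; it says nothing about a path consisting entirely of single arrows. Such a path can end at a vertex $j$ from which two (necessarily non-single) arrows $\alpha_1,\alpha_2$ start, with both $\alpha_1 p$ and $\alpha_2 p$ nonzero, so $P=Ae_i$ has a uniserial head followed by a genuine branch and is \emph{not} uniserial. This happens in the paper's own Example \ref{exa:As-surj-A}: vertex $2$ has a single outgoing arrow $\gamma$, yet $P_2$ branches two layers down. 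This is exactly why Definition \ref{qb} allows $m>1$, and it is the heart of the paper's proof: one follows the maximal path $p$ of single arrows from $i$ to $j$, sets $m=\ell(p)+1$, and takes $U=A(\alpha_1 p)$, $V=A(\alpha_2 p)$, which are uniserial by condition (2) because every path through $\alpha_k p$ contains a non-single arrow; then $\rad^{m}(P)=U+V$ and $P/\rad^{m}(P)$ is uniserial because $p$ is made of single arrows. Your proposal never produces this head-then-branch decomposition.

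Second, in the case of two arrows out of $i$ you manufacture a difficulty that does not exist and propose a fix that would fail. Definition \ref{qb} does \emph{not} require $U\cap V$ to be zero or simple (that is the biserial condition, deliberately dropped here), so once you know $\rad(P)=A\alpha+A\alpha'$ with both summands uniserial you are done with $m=1$, since $P/\rad(P)$ is simple. Your plan of enlarging $m$ until the two legs "separate" and then arguing that $P/\rad^m(P)$ is uniserial because the legs "reconverge" is not viable: for $m\geq 2$ the quotient $P/\rad^m(P)$ contains the branching at the top and is in general not uniserial, and nothing in conditions (1)--(2) forces the legs to reconverge. So the correct choice of $m$ is dictated by the length of the initial string of single arrows (as in the paper), not by where the two uniserial legs intersect.
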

	
	\begin{proof}
		For each $i\in Q_0$, if there is a single arrow $a\in Q_1$ with $s(a)=i$, then we can find a unique path $p$ in $Q$, such that $s(p)=i$, and for each arrow $\alpha\mid p$, $\alpha$ is a single arrow in $Q$. Denote by $j=t(p)\in Q_0$ and $m=\ell(p)\in \mathbb{Z}_{>0}$. Moreover, there exists a non-single arrow $\alpha_1$ such that $s(\alpha_1)=j$. If there is no single arrow starting from $i$, denote $p=e_i$, the trivial path at $i$, and $j=i$. Without loss of generality, we assume that there exists another non-single arrow $\alpha_{2}\neq\alpha_{1}$ such that $s(\alpha_{2})=s(\alpha_{1})=j$. Otherwise, by Definition \ref{sqb}, $Ae_i$ is uniserial.
		
		We assume $\alpha_1 p$ and $\alpha_2 p$ are not zero in $A$. Therefore, by the property of special quasi-biserial algebras, there exists a unique path $w_1=p_1\alpha_1 p$ and a unique path $w_2=p_2\alpha_2 p$, such that they are maximal non zero paths starting from $i$ in $A$. Indeed, $p_1,p_2$ may be trivial paths sometimes. Moreover, let $p_1=\beta_{k}\cdots\beta_1$ and $p_2=\gamma_l\cdots\gamma_1$, then the left $A$-module $U=A(\alpha_{1}p)$ have a $k$-basis $\{\alpha_{1}p,\;\beta_1\alpha_{1}p,\;\cdots,\;\beta_{k}\cdots\beta_1\alpha_{1}p=w_1\}$ and the left $A$-module $V=A(\alpha_{2}p)$ have a $k$-basis $\{\alpha_{2}p,\;\gamma_1\alpha_{2}p,\;\cdots,\;\gamma_l\cdots\gamma_1\alpha_{2}p=w_2\}$. 
		
		It is obvious to prove that $U,V$ are uniserial and $U+V=\rad^{m+1}(Ae_i)$.
		Now consider $$Ae_i/\rad^{m+1}(Ae_i)=Ae_i/\rad (Ap).$$
		Since $p$ is a path starting at $i$ in $Q$ composed by single arrows, by Definition \ref{sqb}, we have $Ae_i/\rad^{m+1}(Ae_i)$ is also uniserial.
		
		If $\alpha_1 p=0$ or $\alpha_2 p=0$ in $A$, by Definition \ref{sqb}, $Ae_i$ is uniserial. Since the conditions of special quasi-biserial algebras are left-right symmetric, $A$ is quasi-biserial.
	\end{proof}
	
	\begin{remark}
			This theorem is not true in reverse. The biserial algebra $A=\qa$ in Example \ref{b isn sb} can be seen as a counterexample. Since $A$ is biserial, it is also quasi-biserial. However, all arrows in Q are non-single arrows, but we still have $x_1x_2\neq 0$, and at the same time $y_1x_2\neq 0$. Therefore, $A$ is not special quasi-biserial.
	\end{remark}
	
	We begin to show that each special quasi-biserial algebra can be regard as a quotient of some symmetric special quasi-biserial algebra. Here we generalized the methods used in \cite{WW} and \cite{Z}.
	
	\begin{definition}\label{def:track}
A {\it generalized track} $T=(Q,I,v)$ is a quiver $Q$ with relations $I$ and a path $v$, such that the following conditions hold:

\begin{itemize}
	\item $\qa$ is special quasi-biserial;
	
	\item all arrows in $Q_1$ occur in $v$;
	
	\item each non-single arrow $\alpha$ of $Q$ occurs precisely once in $v$ and each single arrow $\alpha$ of $Q$ occurs at most twice in $v$;
	
	\item $v$ runs at most twice through each given vertex in $Q$ (count $s(v)$ and $t(v)$ only once if $s(v)=t(v)$).
	\end{itemize}
	We say that two generalized track $T_1=(Q_1,I_1,v_1)$ and $T_2=(Q_2,I_2,v_2)$ are equivalent if $(Q_1,I_1)=(Q_2,I_2)$.
	\end{definition}
	
We show that for each special quasi-biserial algebra, we can always find a set of generalized track which is uniquely determined by its quiver and relations.
	
	\begin{lemma}\label{mark track}
		Let $A=\qa$ be an indecomposable special quasi-biserial algebra with at least one non-single arrow in its quiver $Q$. Then there is a subset of arrows $\Omega\subseteq Q_1$, such that for each $\alpha\in\Omega$, we can find a generalized track $(Q_\alpha,I\cap kQ_\alpha,v_\alpha)$ which has the following properties:
		
		\begin{itemize}
			\item each $Q_\alpha$ is a subquiver of $Q$;
			
			\item each non-single arrow of $Q$ belongs to exactly one $Q_\alpha$;
			
			\item each single arrow $\beta$ of $Q$ must belong to some $Q_\alpha$. In particular, if there exists a path $p$ containing non-single arrows in $Q$, such that $p\alpha\notin I$ or $\alpha p\notin I$, then $\beta$ belongs to at least two $Q_\alpha$.
		\end{itemize}  
		Moreover, such a set of generalized track is uniquely determined by the quiver and relations $(Q,I)$ of the special quasi-biserial algebra $A=\qa$.
	\end{lemma}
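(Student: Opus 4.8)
The plan is to build each generalized track as the subquiver "swept out'' by a maximal nonzero path through a non-single arrow, exploiting the unique-continuation property built into Definition \ref{sqb}.

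First I would record the basic consequence of Definition \ref{sqb}(2): if $p$ is any nonzero path of $\qa$ containing at least one non-single arrow, then there is at most one arrow $\beta$ with $\beta p\notin I$ and at most one arrow $\gamma$ with $p\gamma\notin I$; since $I$ is admissible, every sufficiently long path lies in $I$, so $p$ is a subpath of some nonzero path $w$ that admits no nonzero one-sided extension, and by the uniqueness of those extensions this maximal $w$ is \emph{the} unique maximal nonzero path through any non-single arrow occurring in it. In particular, if two non-single arrows $a,a'$ lie on a common nonzero path then $w_a=w_{a'}$, so ``lying on a common maximal nonzero path'' is an equivalence relation on the set of non-single arrows of $Q$. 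Let $C_1,\dots,C_r$ be its classes, choose $\alpha_i\in C_i$, and set $\Omega=\{\alpha_1,\dots,\alpha_r\}$.

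Next I construct the track attached to $\alpha_i$. If no non-single arrow is repeated in $w:=w_{\alpha_i}$, put $v_{\alpha_i}:=w$. Otherwise some non-single arrow (which, replacing $\alpha_i$ by a conjugate if necessary, we may take to be $\alpha_i$) occurs twice in $w$; by unique continuation the segment of $w$ between two consecutive occurrences is a forced closed path $C$ through $\alpha_i$, and iterating forces $w$ to be a subword of a power of the primitive closed path $c$ underlying $C$. In particular every arrow of $w$ already occurs in $c$, and I set $v_{\alpha_i}:=c$. In both cases let $Q_{\alpha_i}$ be the subquiver of $Q$ spanned by the arrows and vertices occurring in $v_{\alpha_i}$, and $I_{\alpha_i}:=I\cap kQ_{\alpha_i}$. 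To see that $(Q_{\alpha_i},I_{\alpha_i},v_{\alpha_i})$ is a generalized track: condition (1) of Definition \ref{sqb} passes to subquivers, and for condition (2) one notes that an arrow non-single in $Q_{\alpha_i}$ is a fortiori non-single in $Q$, so the unique-continuation bounds for $Q$ restrict to $Q_{\alpha_i}$; hence $kQ_{\alpha_i}/I_{\alpha_i}$ is special quasi-biserial. That each arrow of $Q_{\alpha_i}$ occurs in $v_{\alpha_i}$ is by construction. In the periodic case $v_{\alpha_i}$ is a primitive closed path, so it uses each arrow exactly once and — since at most two arrows leave each vertex, a third visit would force a repeated arrow — visits each vertex at most twice, and all remaining conditions hold. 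In the aperiodic case $v_{\alpha_i}=w$ uses each non-single arrow at most once by the choice of case; the delicate remaining point is that a single arrow occurs at most twice and each vertex at most twice in $w$, which I would prove as follows: a closed subpath of $w$ consisting only of single arrows would be a connected component of $Q$ (each of its vertices having a unique in- and out-arrow), contradicting that $A$ is indecomposable and has a non-single arrow; hence between two occurrences of a single arrow there is a non-single arrow, and feeding this into unique continuation rules out a third occurrence of the single arrow and a third visit to any vertex.

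Finally I check the listed properties and uniqueness. A non-single arrow of $Q$ lies in $Q_{\alpha_j}$ precisely when it occurs in $w_{\alpha_j}$, i.e. precisely when it belongs to $C_j$, so it lies in exactly one $Q_{\alpha_j}$. A single arrow $\beta$ lies in some $Q_{\alpha_j}$: since $A$ is indecomposable with a non-single arrow, $\beta$ sits on a nonzero path meeting a non-single arrow, whose maximal extension contains $\beta$; and if, moreover, $\beta$ can be composed with nonzero paths containing non-single arrows on both sides, then $\beta$ lies simultaneously on two (distinct) maximal nonzero paths, hence in at least two of the $Q_{\alpha_j}$. Uniqueness up to the stated equivalence of generalized tracks is immediate, because the paths $w_a$, the classes $C_i$, and the pairs $(Q_{\alpha_i},I\cap kQ_{\alpha_i})$ depend only on $(Q,I)$ and not on the chosen representatives $\alpha_i$. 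The main obstacle is the aperiodic bookkeeping in the verification above — bounding the number of repetitions of single arrows and of vertex visits in a maximal nonzero path — which is exactly where indecomposability and the ``at most two arrows in and out'' hypothesis are genuinely used.
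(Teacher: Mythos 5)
Your construction for the non-single arrows is essentially the paper's: the unique maximal nonzero path through a non-single arrow (forced by Definition \ref{sqb}(2)), truncation to a primitive cycle or to a subpath satisfying the track conditions, and an equivalence relation whose representatives form the index set. However, there is a genuine gap in how you cover the single arrows. You claim that, because $A$ is indecomposable and has a non-single arrow, every single arrow $\beta$ ``sits on a nonzero path meeting a non-single arrow.'' Indecomposability only gives connectedness of the underlying quiver; the admissible ideal may kill every composition linking $\beta$ to a non-single arrow. For instance, take $Q$ with arrows $\beta\colon 1\to 2$ and $a_1\colon 2\to 3$, $a_2\colon 2\to 4$, and $I=\langle a_1\beta,\,a_2\beta\rangle$. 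This algebra is connected and special quasi-biserial, $\beta$ is single, $a_1,a_2$ are non-single, yet the only nonzero path through $\beta$ is $\beta$ itself, so $\beta$ lies in no subquiver $Q_{\alpha_j}$ produced from your $\Omega$, and the third bullet of the lemma fails for your family of tracks. The paper's proof anticipates exactly this: after forming the tracks attached to non-single arrows (your $\Omega'$-part), it takes the set $S$ of arrows not yet covered, observes they are single, and builds additional tracks from the maximal nonzero paths consisting entirely of single arrows, adjoining a second set $\Omega''$ of representatives; your argument needs this extra step rather than the appeal to indecomposability.

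A secondary, smaller point: for the ``belongs to at least two $Q_\alpha$'' clause you argue under the assumption that $\beta$ composes nontrivially with paths containing non-single arrows \emph{on both sides}, whereas the statement is an ``or''; moreover the two maximal nonzero paths you invoke could coincide, in which case your argument yields only one track. Since the paper's own proof is terse on this clause, I flag it only as a place where your verification does not actually establish what is claimed, in contrast with the single-arrow coverage above, which is a substantive omission that must be repaired along the lines of the paper's $\Omega''$ construction.
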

	
	\begin{proof}
		By Definition \ref{sqb}, for each $\alpha\in Q_1$ is a non-single arrow, there exists a unique maximal path $p\alpha\notin I$ and a unique maximal path $\alpha q\notin I$.
		Let $v_\alpha$ be the maximal subpath of $p\alpha q$ in $kQ$ such that each non-single arrow in $p\alpha q$ occurs precisely once in $v_\alpha$ and each single arrow in $p_\alpha$ occurs at most twice in $v_\alpha$ such that $v_\alpha$ fits the conditions in the definition of generalized tracks. Let $Q_\alpha$ be the underlying quiver of $v_\alpha$. Then $T_\alpha=(Q_\alpha,I\cap kQ_\alpha,v_\alpha)$ is a generalized track.
		We may define an equivalence relation on the set of non-single arrows in $Q_1$ by saying that two non-single arrows $\alpha$ and $\beta$ are equivalent if $(Q_\alpha,I\cap kQ_\alpha)=(Q_\beta,I\cap kQ_\beta)$. An equivalence class is then the set of arrows in a track. Let $\Omega'$ be a set of representatives of the classes of non-single arrows under this equivalence relation.
		
		Now consider the set $S$ of the arrows in $Q$ which are not involved in any $Q_\alpha$ above. They must be a subset of single arrows in $Q$. By Definition \ref{sqb} and the discussion above, for all $\alpha\in S$, we can find a unique  maximal path $p_\alpha$ in $A$ such that no subpath of $p_\alpha$ is in $I$. $p_\alpha$ is consist of single arrows. Let $v_\alpha$ be the maximal subpath of $p_\alpha$ such that each arrow which is involved in $p_\alpha$ occurs exactly once in $v_\alpha$. Then $T_\alpha=(Q_\alpha,I\cap kQ_\alpha,v_\alpha)$ is a generalized track. We can also define an equivalence relation on the set $S$. Let $\Omega''$ be a set of representatives of the classes of this equivalence relation. Then let $\Omega=\Omega'\cup\Omega''$.
		
		The construction of the tracks $T_\alpha$ above implies that they are unique up to equivalence relations. Then the result follows.
	\end{proof}
	
	To enhance understanding of Lemma \ref{mark track}, we provide a concrete example as follows.
	
	\begin{example}\label{exa:tracks}
		Let $Q$ be the quiver which is given by 
		$$
	\begin{tikzcd}
		1 \arrow[r, "\alpha", shift left=2] \arrow[r, "\beta"', shift right] & 2 \arrow[d, "\gamma"]                                                   \\
		4                                                                    & 3 \arrow[l, "\delta"', shift right] \arrow[l, "\epsilon", shift left=2]
	\end{tikzcd}$$
		and $I=\langle \epsilon\gamma, \delta\gamma\beta,  \delta\gamma\alpha\rangle$. Then $A=kQ/I$ is special quasi-biserial. Then $\Omega=\{\alpha,\beta,\delta,\epsilon\}$ and the set of generalized tracks in Lemma \ref{mark track} is given by
		$$\{(Q_\alpha,0,\gamma\alpha),\;(Q_\beta,0 ,\gamma\beta),\;(Q_\delta,0,\delta\gamma),\;(Q_\epsilon,0,\epsilon)\}.$$
	\end{example}
	
	Finally, we prove the main theorem of this section.
	
	\begin{theorem}\label{ssqba quotient}
		Every indecomposable special quasi-biserial algebra $A=\qa$ is a quotient of a symmetric special quasi-biserial algebra.
	\end{theorem}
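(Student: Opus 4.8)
The plan is to adapt the construction of \cite[Theorem 1.4]{WW} (see also \cite{Z}) to our setting, with the strings of a special biserial algebra replaced by the generalized tracks furnished by Lemma \ref{mark track}. In outline: one closes each open generalized track of $A$ into an oriented cycle, adds Brauer-graph-style relations, and checks that the resulting algebra $\tilde A$ is symmetric special quasi-biserial and admits $A$ as a quotient.

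First one treats the degenerate case. If $Q$ has no non-single arrow, then $Q$ is connected with at most one arrow in and at most one arrow out at every vertex, hence is a linear or a cyclic quiver, so $A$ is a Nakayama algebra and in particular special biserial; the claim then follows from Theorem \ref{quotient}. So assume $Q$ has at least one non-single arrow, and let $\{T_\alpha=(Q_\alpha,I\cap kQ_\alpha,v_\alpha)\}_{\alpha\in\Omega}$ be the canonical family of generalized tracks of Lemma \ref{mark track}. Form a quiver $\tilde Q$ with the same vertices as $Q$ and containing $Q$, by adjoining for each $\alpha\in\Omega$ with $s(v_\alpha)\neq t(v_\alpha)$ one new arrow $\lambda_\alpha\colon t(v_\alpha)\to s(v_\alpha)$; set $\widehat v_\alpha=\lambda_\alpha v_\alpha$, an oriented cycle, and $\widehat v_\alpha=v_\alpha$, $\lambda_\alpha=e_{s(v_\alpha)}$ when $s(v_\alpha)=t(v_\alpha)$. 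For each vertex $i$ on the cycle $\widehat v_\alpha$, let $\widehat v_\alpha^{(i)}$ denote the rotation of $\widehat v_\alpha$ starting and ending at $i$. Choose $m_\alpha\in\mathbb Z_{>0}$, equal to $1$ unless $v_\alpha$ is already a cycle in $Q$, in which case $m_\alpha$ is taken large enough that $v_\alpha^{m_\alpha}=0$ in $A$. Let $\tilde I\subseteq k\tilde Q$ be the ideal generated by: the relations $(\widehat v_\alpha^{(i)})^{m_\alpha}=(\widehat v_\beta^{(i)})^{m_\beta}$ for each vertex $i$ and each pair of cycles-at-$i$ arising from the tracks through $i$; the monomial relations $(\widehat v_\alpha^{(i)})^{m_\alpha}\beta=0$, where $\beta$ is the first arrow of $\widehat v_\alpha^{(i)}$; and $\beta\gamma=0$ for every length-two path $\beta\gamma$ of $\tilde Q$ that is not a subpath of any $(\widehat v_\alpha^{(i)})^{m_\alpha}$. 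Put $\tilde A=k\tilde Q/\tilde I$.

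It then remains to verify three things. \textbf{(a) $\tilde A$ is symmetric.} The combinatorial input, obtained from the maximality of the tracks together with Definition \ref{sqb}(1) by a counting argument as in the special biserial case, is that at most two cycles-at-$i$ occur for each vertex $i$, that $\tilde Q$ still has at most two arrows in and two arrows out at every vertex, and that $\tilde A$ is finite dimensional with $\soc(\tilde Ae_i)$ spanned by the common element $(\widehat v_\alpha^{(i)})^{m_\alpha}$; hence $\tilde A$ is weakly symmetric, and the linear form $f\colon\tilde A\to k$ sending each such socle basis vector to $1$ and every other basis vector to $0$ is symmetric with kernel containing no nonzero one-sided ideal. \textbf{(b) $\tilde A$ is special quasi-biserial.} Condition (1) is the valence statement just recorded; for condition (2) one checks that every path of $\tilde Q$ through a non-single arrow occurs, up to rotation, as a subpath of a unique power $(\widehat v_\alpha^{(i)})^{m_\alpha}$, so its left and right extensions are forced. \textbf{(c) $A$ is a quotient of $\tilde A$.} The surjection $k\tilde Q\to A=\qa$ that is the canonical projection on $kQ$ and sends each $\lambda_\alpha$ to $0$ annihilates every generator of $\tilde I$: generators involving some $\lambda_\alpha$ map to $0$; the relations $\beta\gamma=0$ hold in $A$ because every nonzero path of $A$ is a subpath of a track; and the relations $(\widehat v_\alpha^{(i)})^{m_\alpha}=(\widehat v_\beta^{(i)})^{m_\beta}$ either involve some $\lambda$ or, when $v_\alpha$ and $v_\beta$ are cycles in $Q$, have both sides mapping to $0$ by the choice of multiplicities. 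Hence the map factors as a surjection $\tilde A\twoheadrightarrow A$.

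\textbf{Main obstacle.} The real work lies in step (b), and specifically in the bookkeeping around single arrows: a single arrow of $Q$ may be traversed twice by, or shared between two, generalized tracks, and some single arrows of $Q$ become non-single in $\tilde Q$ after the new arrows $\lambda_\alpha$ are adjoined. One must check that Definition \ref{sqb}(2) --- which constrains only paths through non-single arrows, and is genuinely weaker than its special biserial analogue --- is not destroyed by the closing-up, and that the closed-up tracks $\widehat v_\alpha$ indeed exhaust the nonzero paths of both $A$ and $\tilde A$. Granting the valence count, the symmetry in (a) and the quotient statement in (c) then follow the classical pattern of \cite{WW}.
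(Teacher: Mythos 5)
Your overall strategy (close up the generalized tracks of Lemma \ref{mark track} into cycles, impose Brauer-type relations, and map back to $A$) is in the spirit of the paper's proof, but the step you defer as ``the main obstacle'' is precisely where the construction as stated breaks, in two ways. First, adjoining one new arrow $\lambda_\alpha\colon t(v_\alpha)\to s(v_\alpha)$ for \emph{every} open track destroys condition (1) of Definition \ref{sqb}: distinct open tracks may end at a vertex at which other arrows already start. In Example \ref{exa:tracks} the tracks are $\gamma\alpha$, $\gamma\beta$, $\delta\gamma$, $\epsilon$; your recipe adds four new arrows, and vertex $3$ then has four outgoing arrows ($\delta$, $\epsilon$ and the two new arrows closing $\gamma\alpha$ and $\gamma\beta$), so $\tilde A$ is not special quasi-biserial and your valence count in (a)--(b) fails (vertex $3$ also lies on three of your cycles, not at most two). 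The paper avoids this by first composing tracks along shared single arrows via the star product ($\delta\gamma\star\gamma\alpha=\delta\gamma\alpha$, $\epsilon\star\gamma\beta=\epsilon\gamma\beta$), in an induction (Cases 1 and 2), and only then closing each \emph{maximal} composite with a single new arrow; maximality guarantees no arrow leaves its terminus or enters its source, which is what makes the new arrow harmless.

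Second, even after fixing the quiver, your relation scheme is too weak to give a symmetric algebra, exactly because in the quasi-biserial setting two closed-up cycles can share a single arrow. With cycles $\lambda_1\gamma\alpha$ and $\lambda_2\gamma\beta$ sharing $\gamma$, imposing only the equalities of (rotated) full cycle powers leaves the element $x=\alpha\lambda_1-\beta\lambda_2$ nonzero in $\tilde A$ (your commutativity generators live in higher degree, and both monomials are subpaths of cycles), while $\gamma x=x\gamma=0$ and all other products with arrows vanish by your zero relations; so $kx$ is a nonzero two-sided ideal, and any symmetric form $f$ satisfies $f(\alpha\lambda_1)=f(\lambda_1\alpha)=0$ and $f(\beta\lambda_2)=0$, hence kills this ideal. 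Thus $\tilde A$ admits no symmetrizing form, and claim (a) fails. This is why the paper's ideal $I_s$ contains, besides the zero relations and cycle-power relations, the \emph{maximally truncated} commutativity relations $p_1-p_2$ with $q_1(p_1-p_2)q_2=v_\alpha^n-v_\beta^n$ for $q_1,q_2$ of maximal length (item (4) in the proof of Theorem \ref{ssqba quotient}); these strip off the shared single-arrow borders and are exactly what is needed to make the form $\psi$ non-degenerate, as one sees in Example \ref{exa:As-surj-A}, where the generating relation is $\alpha\kappa_1\delta=\beta\kappa_2\epsilon$ rather than an equality of full cycles. Without the star-product gluing and these truncated relations, your steps (a) and (b) do not go through, so the argument has a genuine gap rather than a routine verification.
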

	
	\begin{proof}
		If $Q$ only contains single arrows, then $A$ is simply a Nakayama algebra. Then by Theorem \ref{quotient}, $A$ is a quotient of some symmetric special biserial algebra. Now assume that $Q$ contains at least one non-single arrow.
		By Lemma \ref{mark track}, since $A$ is special quasi-biserial, we can find a unique set $\{T_\alpha\mid\alpha\in\Omega_A\}$ of generalized tracks. Let $\Sigma_A\subseteq\Omega_A$ be the set of all those arrows $\alpha$ such that $v_\alpha$ is a closed path for each $\Sigma_A$. We shall recursively construct,  by induction on $\Omega_A\backslash\Sigma_A$, a special quasi-biserial algebra $A'$ which maps onto $A$ and for which $\Sigma_{A'}=\Omega_{A'}$.

		Let $v_\alpha,v_\beta$ be paths in generalized tracks with $\alpha,\beta\in\Omega_A\backslash\Sigma_A$. Define a new operation between $v_\alpha=\alpha_{n}\cdots\alpha_{1}$ and $v_\beta=\beta_m\cdots\beta_1$.
			$$
		v_\alpha\star v_\beta:=\left\{
		\begin{array}{*{3}{lll}}
			\alpha_{n}\cdots\alpha_{l+1}\beta_{m}\cdots\beta_1& ,& \text{if $\alpha_i=\beta_{m-l+i}$ are single arrows in $Q$ for all $1\leq i\leq l$};\\
			v_\alpha v_\beta&,&\text{else, if $s(v_\alpha)=t(v_\beta)$;}\\
			0&,& \text{otherwise}.
		\end{array}
		\right.
		$$
		By definition of the star product above and the maximality of $v_\alpha,v_\beta$, we can choose a minimal generating set of $I$ containing a set $R(\alpha,\beta)$ of relations which can be divided by $\alpha_{l+1}\beta_{m}\cdots\beta_{m-l}$. If there exist some $\alpha_{j}',\beta_{k}'$ are non-single arrows in $Q$, such that $s(\alpha_{j}')=s(\alpha_{j})$ and $t(\beta_{k}')=t(\beta_{k})$ with $l+1\leq j\leq n$ and $1\leq k\leq m-l$. then we denote the set of elements of form $\alpha_{j}'\alpha_{j-1}\cdots\alpha_{l+1}\beta_{m}\cdots\beta_{k+1}\beta_{k}'$ by $p(\alpha,\beta)$.
		
			\smallskip
		
		\textit{Case 1}. Suppose that there is a subset $\Delta_A\subset\Omega_{A}\backslash\Sigma_A$ such that $\Delta_A=\{\alpha_{1},\cdots,\alpha_d\}$ and such that the paths $\{v_\alpha\mid\alpha\in\Delta_A\}$ can be ordered so that $v_{\alpha_{1}}\star\cdots\star v_{\alpha_d}$ is a closed path. 
		
		By the maximality of $v_\alpha$, we can choose a minimal generating set $\mathcal{G}$ of $I$ contains $\bigcup_{i=1}^{d-1}R(\alpha_{i},\alpha_{i+1})$. Let $$I':=\langle(\mathcal{G}\;\backslash\;\bigcup_{i=1}^{d-1}R(\alpha_{i},\alpha_{i+1}))\cup\{p(\alpha_{i},\alpha_{i+1})\mid i=1,\cdots,d-1\}\rangle.$$
		By condition (2) in Definition \ref{sqb}, $I'\subseteq I$. Therefore, $\qa$ is a quotient algebra of $\qa'$, and it is clear that $|\Omega_{A}\backslash\Sigma_A|>|\Omega_{A'}\backslash\Sigma_{A'}|$.
		
			\smallskip
		
		\textit{Case 2}. Suppose there is a maximal subset $\Delta_A\subset\Omega_{A}\backslash\Sigma_A$ such that $\Delta_A=\{\alpha_{1},\cdots,\alpha_d\}$ and such that the paths $\{v_\alpha\mid\alpha\in\Delta_A\}$ can be ordered so that $v_{\alpha_{1}}\star\cdots\star v_{\alpha_d}$ is a nonzero path which is not closed. 
		
		Since $\Delta_A$ is maximal, and hence there is no path in the generalized tracks corresponding to the elements in $\Omega_{A}\backslash(\Sigma_A\cup\Delta_A)$ is composable (by star product) with $v_{\alpha_{1}}\star\cdots\star v_{\alpha_d}$. This implies that no arrow of $Q$ starts at the terminus vertex $t(v_{\alpha_{1}})$ of $v_{\alpha_{1}}$ and no arrow of $Q$ ends at the source vertex $s(v_{\alpha_{d}})$ of $v_{\alpha_{d}}$. We add a new arrow $\sigma:t(v_{\alpha_{1}})\rightarrow s(v_{\alpha_{d}})$ to $Q$. The new quiver is named $Q'$ and the ideal $I'$ is generated by 
		$$(\mathcal{G}\;\backslash\;\bigcup_{i=1}^{d-1}R(\alpha_{i},\alpha_{i+1}))\cup\{p(\alpha_{i},\alpha_{i+1})\mid i=1,\cdots,d-1\}\cup\{\sigma\delta,\epsilon\sigma\} $$
		with $\delta$ the arrow which is different from the ending arrow of $v_{\alpha_{1}}$ and $\epsilon$ the arrow which is different from the starting arrow of $v_{\alpha_{d}}$. We observe that $A'=\qa'$ maps surjectively onto $A=\qa$. Moreover, $A'$ is special quasi-biserial and $|\Omega_{A}\backslash\Sigma_A|>|\Omega_{A'}\backslash\Sigma_{A'}|$.
		
			\smallskip
		
		Hence, by induction we may assume that all path $v_\alpha$ in the generalized tracks of $A$ are closed path (unique up to cyclic permutation). Since $A$ is finite-dimensional, let $n$ be the smallest integer such that $\rad^n(A)=0$, and for all non-single arrow $\beta\in Q_1$, there is a unique $v_\alpha$ in tracks of $A$ containing $\beta$, so we denote it by $v_\beta$. Let $\beta$ be a single arrow in $Q$. There exists at most two closed paths $v_{\alpha_{1}}$ and $v_{\alpha_{2}}$ containing $\beta$. Let $I_s$ be the ideal of $kQ$ generated by all elements
		
		\begin{enumerate}
			\item $p=\alpha_{n}\cdots\alpha_{1}$ with $\alpha_n$ and $\alpha_1$ are non-single arrows in different $v_\alpha$, while $\alpha_{n-1},\cdots,\alpha_2$ are all single arrows;
			
			\item  $\alpha_{2}\alpha_{1}$  is not a subpath of any $v_\alpha$ except if $\alpha_{2}=\alpha_1$ is a loop;
			
			\item all paths containing a proper subpath $v_\alpha^n$ (up to cyclic permutation) for some $\alpha\in\Omega_{A}$;
			
			\item $p_1-p_2$, where there exist $q_1,q_2\in Q$ of maximal length, such that $q_1(p_1-p_2)q_2=v_{\alpha}^n-v_{\beta}^n$ with $v_{\alpha}\neq v_{\beta}$, and $v_{\alpha}$ and $v_{\beta}$ starting at same vertex in $Q$. 
		\end{enumerate}
		Then $(Q,I_s)$ defines a special quasi-biserial algebra $A_s=\qa_s$ and $A$ is an epimorphic image of $A_s$. 
		
		Let us fix a $k$-basis $\mathcal{B}$ of $A_s$ consisting of pairwise distinct nonzero paths of quiver $Q$. Define a $k$-linear map $\psi$ on the basis elements in $\mathcal{B}$ by
		$$
		\psi(b):=\left\{
		\begin{array}{*{3}{lll}}
		1& ,& \text{if}\; b\in\soc(A_s);\\
		0&,& \text{otherwise.}
		\end{array}
		\right.
		$$
		Then it defines a non-degenerate linear form on $A_s$. The form is symmetric since if $uv$ is a path in the socle, then $uv=v_\alpha^n$ for some $\alpha$, and then $vu=v_\beta^n$ for some $\beta$. Moreover, if there exist a left or right ideal $J$, such that $\psi(J)=0$, then by the definition of $\psi$, there exist $v_1^n,v_2^n\in \soc(A_s)$, such that $c_1v_1^n-c_2v_2^n\in J\cap\soc(A_s)$ with $c_1,c_2\in k$. Since $\psi(J)=0$, $c_1=c_2$. However, by the relations given by (4), we have all the elements which can generate $v_1^n-v_2^n$ are zero in the algebra $A_s$. That means $J=0$. Consequently, $A_s$ is a symmetric special quasi-biserial algebra equipped with a surjective homomorphism onto $A$.
	\end{proof}
	
	As a final illustration, we apply Example \ref{exa:tracks} to showcase Theorem \ref{ssqba quotient}.
	
	\begin{example}\label{exa:As-surj-A} (Example \ref{exa:tracks} revisit)\;\;
	Recall that the set of generalized tracks of $A$ in Lemma \ref{mark track} is given by
		$$\{(Q_\alpha,0,\gamma\alpha),\;(Q_\beta,0 ,\gamma\beta),\;(Q_\delta,0,\delta\gamma),\;(Q_\epsilon,0,\epsilon)\}.$$
		Then we can choose $\delta\gamma\star\gamma\alpha=\delta\gamma\alpha$ and $\epsilon\star\gamma\beta=\epsilon\gamma\beta$ to be the maximal paths in $Q$. Then the $A_s=kQ'/I'$ in Theorem \ref{ssqba quotient} can be chosen by the following quiver $Q'$
		$$
		\begin{tikzcd}
			1 \arrow[r, "\alpha", shift left=2] \arrow[r, "\beta"', shift right]      & 2 \arrow[d, "\gamma"]                                                   \\
			4 \arrow[u, "\kappa_1", shift left=2] \arrow[u, "\kappa_2"', shift right] & 3 \arrow[l, "\delta"', shift right] \arrow[l, "\epsilon", shift left=2]
		\end{tikzcd}$$
		and relations $I'$
		\begin{enumerate}[(i)]
			\item $\alpha\kappa_1\delta=\beta\kappa_2\epsilon$, $\kappa_1\delta\gamma\alpha=\kappa_2\epsilon\gamma\beta$, $\delta\gamma\alpha\kappa_1=\epsilon\gamma\beta\kappa_2$;
			\item  $\kappa_1\delta\gamma\alpha\kappa_1=\alpha\kappa_1\delta\gamma\alpha=\gamma\alpha\kappa_1\delta\gamma=\delta\gamma\alpha\kappa_1\delta=0$,\\ $\kappa_2\epsilon\gamma\beta\kappa_2=\beta\kappa_2\epsilon\gamma\beta=\gamma\beta\kappa_2\epsilon\gamma=\epsilon\gamma\beta\kappa_2\epsilon=0$;
			\item $\alpha\kappa_2=\beta\kappa_1=\kappa_2\delta=\kappa_1\epsilon=\delta\gamma\beta=\epsilon\gamma\alpha=0$.
		\end{enumerate}
		The indecomposable projective $A_s$-modules are given by	$$P_1=\begin{array}{*{3}{lll}}
			&1&\\2&&2\\3&&3\\4&&4\\&1&
		\end{array},\quad P_2=\begin{array}{*{3}{lll}}
			&2&\\&3&\\4&&4\\1&&1\\&2&
		\end{array},\quad P_3=\begin{array}{*{3}{lll}}
			&3&\\4&&4\\1&&1\\&2&\\&3&
		\end{array},\quad P_4=\begin{array}{*{3}{lll}}
			&4&\\1&&1\\2&&2\\3&&3\\&4&
		\end{array}.$$
		Therefore, $A_s$ is naturally symmetric and special quasi-biserial, and $$A\cong A_s/\langle\kappa_1,\kappa_2,  \epsilon\gamma,  \delta\gamma\alpha\rangle.$$
		In fact, $A_s$ is a symmetric fractional Brauer graph algebra (to be defined formally in Section \ref{sec:sf-BGA}).
	\end{example}

	\section{Symmetric fractional Brauer graph algebras}\label{sec:sf-BGA}
	
	In this section, we examine a special class of symmetric special quasi-biserial algebras introduced in \cite{LL}, called symmetric fractional (abbr. sf) Brauer graph algebras. These algebras are completely determined by combinatorial data consisting of labeled ribbon graphs and multiplicity functions.
	
	\subsection{Sf Brauer graphs and their associated algebras}
	\
	\smallskip
	
	Recall that a ribbon graph is a combinatorial structure $\Gamma=(V,H,s,\iota,\rho)$, where $V$ is a vertex set, $H$ a half-edge set, $s$ assigns half-edges to their incident vertices, $\iota$ partitions $H$ into edges $E(\Gamma)$ via its orbits, and $\rho$ defines a cyclic ordering of half-edges incident to each vertex. 
	
	\begin{definition}
		Let $\Gamma=(V,H,s,\iota,\rho)$ be a ribbon graph. An edge $\bar h=\{h,\iota (h)\}\in E(\Gamma)$ is called {\it labelable} if $\iota(\rho(h))=\rho(\iota(h))$.
	\end{definition}
	
	In Example \ref{exa:BGA-with-4-mult-edges}, all edges in $\Gamma$ are labelable.
	
	\begin{definition}
		Let $\Gamma$ be a ribbon graph and $L\subseteq E(\Gamma)$ a set of labelable edges. The pair $\Gamma_L=(\Gamma, L)$ is called a {\it labeled ribbon graph} if for every $v\in V$ with $\mathrm{val}(v)\geq 2$, there exist at least two half-edges $h_i,h_j\in H_v$ such that their associated edges $\bar h_i, \bar h_j$ are not in $L$. In particular, edges in $L$ are referred to as {\it labeled edges}.
	\end{definition}
	
	In fact, labeled edges in the ribbon graph is corresponding to the $L$-partition in \cite{LL}. For simplicity, we draw these edges as dotted lines in the ribbon graph $\Gamma$. We give the definition of sf Brauer graph as follows.
	
	\begin{definition}
		An {\it sf Brauer graph} is a pair $(\Gamma_L,\mathbbm{m})$ consisting of a labeled ribbon graph $\Gamma_L=(\Gamma,L)$ and a multiplicity function $\mathbbm{m}: V \rightarrow \mathbb{Z}_{>0}$.
	\end{definition}
	
		Let $(\Gamma_L,\mathbbm{m})$ be an sf Brauer graph and $k$ be a base field.  One can associate a quiver $Q=Q_{\Gamma,L}$ and an admissible ideal of relations $I=I_{\Gamma,L}$ in the path algebra $kQ$ as follows. 
	
	\begin{enumerate}[(1)]
		\item Denote by $Q_\Gamma$, the quiver associated with the Brauer graph $(\Gamma,\m)$.  We associate to $(\Gamma_L,\mathbbm{m})$ the quiver $Q_{\Gamma,L}$ defined as the quiver of the quotient algebra $kQ_\Gamma/I_L$, where $I_L$ is the ideal of $kQ_\Gamma$ generated by $$\{\alpha-\beta\;|\;\text{$\alpha$ and $\beta$ are arrows starting from the same labeled edge in $\Gamma_L$}\}.$$ 
		The quotient identifies all arrows starting from the same labeled edge in $\Gamma_L$ as a single arrow in $Q_{\Gamma,L}$. Special cycles in $Q_{\Gamma,L}$ can be studied via the canonical projection $\pi\colon\;kQ_\Gamma\rightarrow kQ_{\Gamma,L}$ where each special cycle in $Q_{\Gamma,L}$ lifts to a collection of special cycles in $Q_\Gamma$.
		
		\item The ideal $I_{\Gamma,L}$ is generated by the following set of relations:
		
		\begin{enumerate}[(i)]
			\item For each edge $\bar h\in E(\Gamma)'$ (i.e.\ both $s(h)$ and $s(\iota(h))$ are not truncated), there are exactly two special cycles $C_\alpha$ and $C_\beta$ end at $\bar h$ in $Q_\Gamma$. 
			
			Let $\pi(C_\alpha^{\m(C_\alpha)})=\alpha_n\cdots\alpha_1$ and $\pi(C_\beta^{\m(C_\beta)})=\beta_m\cdots\beta_1$ in $Q_{\Gamma,L}$. Denote by $l_1$ (resp. $l_2$) the smallest positive integer such that $\alpha_{l_1}\neq\beta_{l_1}$ (resp. $\alpha_{n-l_2+1}\neq\beta_{m-l_2+1}$) in $Q_{\Gamma,L}$. Then we have the relation
			$$\alpha_{n-l_2+1}\cdots\alpha_{l_1}=\beta_{m-l_2+1}\cdots\beta_{l_1}.$$
			
			\item For each half-edge $h\in H'$ (i.e.\ both $s(h)$ is not truncated), we impose the monomial relation $$\pi(C_{\alpha_h}^{\m(C_{\alpha_h})}\alpha_h)=0,$$
			ensuring that the quotient algebra $kQ/I$ is finite dimensional.
			
			\item For some path $p=\alpha_n\cdots\alpha_1$ in $Q_\Gamma$, assume
			\begin{itemize}
				\item $\alpha_2,\cdots,\alpha_{n-1}$ are arrows starting from labeled edges in $\Gamma_L$;
				\item $\alpha_1$ and $\alpha_n$ are not starting from labeled edges in $\Gamma_L$;
				\item $\sigma^{n-1}(\alpha_n)\neq \alpha_1$ in $Q_\Gamma$.
			\end{itemize} Then we have the monomial relation $$\pi(p)=0.$$
		\end{enumerate}
	\end{enumerate}
	
	Same as the Brauer graph case, we give the definition of sf Brauer graph algebras.
	
	\begin{definition}
		A $k$-algebra $A$ is called an {\it sf Brauer graph algebra} if there exists an sf Brauer graph $(\Gamma_L, \m)$ such that $A\cong kQ_{\Gamma,L}/I_{\Gamma,L}$ as $k$-algebras.
	\end{definition} 
	
	We give an example here.
	
	\begin{example}\label{example sf-BGA}
		Let $\Gamma$ be the ribbon graph in Example \ref{exa:BGA-with-4-mult-edges}. Each edge in $\Gamma$ is labelable. Define $L=\{\bar 2, \bar 4\}$. Then the corresponding labeled ribbon graph can be described as follows.
		
			\begin{center}

			\tikzset{every picture/.style={line width=0.75pt}} 
			
			\begin{tikzpicture}[x=0.75pt,y=0.75pt,yscale=-1,xscale=1]
				
				\draw  [fill={rgb, 255:red, 0; green, 0; blue, 0 }  ,fill opacity=1 ] (140,105) .. controls (140,102.24) and (142.24,100) .. (145,100) .. controls (147.76,100) and (150,102.24) .. (150,105) .. controls (150,107.76) and (147.76,110) .. (145,110) .. controls (142.24,110) and (140,107.76) .. (140,105) -- cycle ;
				\draw  [fill={rgb, 255:red, 0; green, 0; blue, 0 }  ,fill opacity=1 ] (290,105) .. controls (290,102.24) and (292.24,100) .. (295,100) .. controls (297.76,100) and (300,102.24) .. (300,105) .. controls (300,107.76) and (297.76,110) .. (295,110) .. controls (292.24,110) and (290,107.76) .. (290,105) -- cycle ;
				\draw [line width=1.5]    (145,105) -- (295,105) ;
				\draw [line width=1.5]  [dash pattern={on 5.63pt off 4.5pt}]  (145,105) .. controls (64,49.5) and (160,49.5) .. (295,105) ;
				\draw [line width=1.5]  [dash pattern={on 5.63pt off 4.5pt}]  (145,105) .. controls (252,49.5) and (391,46.5) .. (295,105) ;
				\draw [line width=1.5]    (145,105) .. controls (159,32.5) and (274,34.5) .. (295,105) ;
				\draw (165,111) node [anchor=north west][inner sep=0.75pt]   [align=left] {$1$};
				\draw (99,72) node [anchor=north west][inner sep=0.75pt]   [align=left] {$2$};
				\draw (135,72) node [anchor=north west][inner sep=0.75pt]   [align=left] {$3$};
				\draw (163,74) node [anchor=north west][inner sep=0.75pt]   [align=left] {$4$};
				\draw (263,111) node [anchor=north west][inner sep=0.75pt]   [align=left] {$1'$};
				\draw (261,76) node [anchor=north west][inner sep=0.75pt]   [align=left] {$2'$};
				\draw (291,72) node [anchor=north west][inner sep=0.75pt]   [align=left] {$3'$};
				\draw (333,72) node [anchor=north west][inner sep=0.75pt]   [align=left] {$4'$};
				\draw (124,109) node [anchor=north west][inner sep=0.75pt]   [align=left] {$v_1$};
				\draw (306,109) node [anchor=north west][inner sep=0.75pt]   [align=left] {$v_2$};

			\end{tikzpicture}
			
		\end{center}
		Let $\m(v_1)=\m(v_2)=1$. Then the quiver $Q_{\Gamma,L}$ associated with $\Gamma_L$ is given by 
		$$
		\begin{tikzcd}
			1 \arrow[r, "\alpha_1", shift left=2] \arrow[r, "\beta_1"', shift right] & 2 \arrow[d, "\alpha_2"]                                                  \\
			4 \arrow[u, "\alpha_4"]                                                  & 3 \arrow[l, "\beta_3"', shift right] \arrow[l, "\alpha_3", shift left=2]
		\end{tikzcd}$$
		and the corresponding sf Brauer graph algebra is $A=kQ_{\Gamma,L}/I_{\Gamma,L}$ where $I_{\Gamma,L}$ is generated by the following relations.	
		\begin{enumerate}[(i)]
			\item $\alpha_1\alpha_4\alpha_3=\beta_1\alpha_4\beta_3$, $\alpha_3\alpha_2\alpha_1=\beta_3\alpha_2\beta_1$;
			\item $\alpha_4\alpha_3\alpha_2\alpha_1\alpha_4=\alpha_3\alpha_2\alpha_1\alpha_4\alpha_3=\alpha_2\alpha_1\alpha_4\alpha_3\alpha_2=\alpha_1\alpha_4\alpha_3\alpha_2\alpha_1=0$,\\ $\alpha_4\beta_3\alpha_2\beta_1\alpha_4=\beta_3\alpha_2\beta_1\alpha_4\beta_3=\alpha_2\beta_1\alpha_4\beta_3\alpha_2=\beta_1\alpha_4\beta_3\alpha_2\beta_1=0$;
			\item $\beta_3\alpha_2\alpha_1=\alpha_3\alpha_2\beta_1=\beta_1\alpha_4\alpha_3=\alpha_1\alpha_4\beta_3=0$.
		\end{enumerate}
		The indecomposable projective module of $A$ are 
		$$P_1=\begin{array}{*{3}{lll}}
			&1&\\2&&2\\3&&3\\&4&\\&1&
		\end{array},\quad P_2=\begin{array}{*{3}{lll}}
			&2&\\&3&\\4&&4\\1&&1\\&2&
		\end{array},\quad P_3=\begin{array}{*{3}{lll}}
			&3&\\4&&4\\1&&1\\&2&\\&3&
		\end{array},\quad P_4=\begin{array}{*{3}{lll}}
			&4&\\&1&\\2&&2\\3&&3\\&4&
		\end{array}.$$
		Therefore, $A$ is naturally symmetric and special quasi-biserial.
		
		Note that the algebra $A_s$ in Example \ref{exa:As-surj-A} is also an sf Brauer graph algebra associated with the ribbon graph $\Gamma$ in Example \ref{exa:BGA-with-4-mult-edges} and a labeled edge $\bar 2$.
	
	\end{example}
	
	We show that the following proposition hold for each sf Brauer graph algebra.
	
	\begin{proposition}\label{sf-BGA is ssqb}
		Let $A=\qa$ be an sf Brauer graph algebra associated with an sf Brauer graph $(\Gamma_L,\m)$. Then $A$ is finite dimensional, symmetric and special quasi-biserial.
	\end{proposition}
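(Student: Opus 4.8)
The plan is to verify the three assertions one at a time, using the canonical projection $\pi\colon kQ_\Gamma\to kQ_{\Gamma,L}$ together with the fact (Theorem~\ref{thm:BTA-RFS}(1)) that the ordinary Brauer graph algebra $kQ_\Gamma/I_\Gamma$ is finite dimensional, symmetric and special biserial. The starting point is to describe $Q_{\Gamma,L}$: since by definition it is the quiver of $kQ_\Gamma/I_L$, it is obtained from $Q_\Gamma$ by identifying, for each labeled edge, the two arrows leaving that edge. As $Q_\Gamma$ has at most two arrows starting and at most two arrows ending at each vertex, so does $Q_{\Gamma,L}$; this is condition~(1) of Definition~\ref{sqb}. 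Using the labelability condition $\iota(\rho(h))=\rho(\iota(h))$, one checks that when $\bar h$ is labeled the two arrows out of $\bar h$ have equal target and the two arrows into $\bar h$ are likewise identified by $\pi$, so the image of an arrow from a labeled edge is a \emph{single} arrow of $Q_{\Gamma,L}$; conversely, every non-single arrow of $Q_{\Gamma,L}$ has a unique lift $\alpha_g$ in $Q_\Gamma$ with $\bar g$ not labeled, and therefore lies on a unique special cycle of $Q_\Gamma$ up to rotation. This unique-lift property is the backbone of the argument.

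For finite dimensionality it suffices to bound the lengths of nonzero paths of $A$, since every generator of $I_{\Gamma,L}$ already lies in $\langle (Q_{\Gamma,L})_1\rangle^2$. No nonzero path can contain a full power $\pi(C_{\alpha_h}^{\m(C_{\alpha_h})})$ of a special cycle, by the monomial relations of type~(ii); and by the relations of type~(iii) a nonzero path, traced through each of its maximal labeled segments, is forced to follow a special cycle of $Q_\Gamma$ up to the identifications of $\pi$. As in the Brauer graph case this bounds the length of any nonzero path of $A$ by a quantity governed by $\sum_{v\in V}\m(v)\,\mathrm{val}(v)$, so $I_{\Gamma,L}$ is admissible and $A$ is finite dimensional.

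The heart of the proof is condition~(2) of Definition~\ref{sqb}. Let $p=\gamma_n\cdots\gamma_1\notin I_{\Gamma,L}$ contain at least one non-single arrow, say $\gamma_i$; I must show that there is at most one arrow $\beta$ with $\beta p\notin I_{\Gamma,L}$ (the statement for $p\gamma\notin I_{\Gamma,L}$ being symmetric, as the assignment $(\Gamma_L,\m)\mapsto A$ is left--right symmetric). By the unique-lift property $\gamma_i=\alpha_g$ for a unique $g$, and I claim that then each of $\gamma_{i+1},\dots,\gamma_n$ and the putative $\beta$ is forced. Arguing upward from $\gamma_i$: at each step, if the source of the next arrow is a labeled edge then that arrow is the unique arrow out of it; otherwise, taking the nearest non-single arrow at or below the current position as anchor (which, being non-single, has a unique lift), the type~(iii) relation reaching from that anchor up to the new arrow forces the latter to be the appropriate $\sigma^{-1}$-iterate of the anchor. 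Hence $t(p)$ and the continuation $\beta$ are uniquely determined, so $A$ is special quasi-biserial. \textbf{This propagation step is the main obstacle}: one must check that the genuine branching available to a path consisting only of single arrows --- which may turn into either of two arrows at every non-labeled edge it enters --- is entirely eliminated once $p$ contains one non-single arrow, and this requires carefully matching the identifications at labeled edges against the type~(iii) relations, including the edge cases near truncated vertices.

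Finally, symmetry. One computes directly from the relations that for each edge $\bar h$ the indecomposable projective $Ae_{\bar h}$ has one-dimensional socle, spanned by the image $\pi(C_\alpha^{\m(C_\alpha)})$ of a full power of a special cycle through $\bar h$; by relation~(i) this element does not depend on which side of $\bar h$ is used. Fixing a $k$-basis $\mathcal B$ of $A$ consisting of nonzero paths, define $f\colon A\to k$ by $f(b)=1$ if $b\in\soc(A)$ and $f(b)=0$ otherwise, exactly as in the last paragraph of the proof of Theorem~\ref{ssqba quotient}. Then $f$ is symmetric, because a path $uv$ lying in the socle equals $\pi(C_\alpha^{\m(C_\alpha)})$ for some $\alpha$ and hence $vu=\pi(C_\beta^{\m(C_\beta)})$ lies in the socle as well; and $f$ is non-degenerate, since a nonzero one-sided ideal annihilated by $f$ would have to contain an element $c_1\pi(C_\alpha^{\m(C_\alpha)})-c_2\pi(C_\beta^{\m(C_\beta)})$ of the socle with $c_1=c_2$, which relation~(i) shows is zero in $A$. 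Therefore $A$ is symmetric, and the proposition follows.
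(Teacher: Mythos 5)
Your proposal is correct and follows essentially the same route as the paper: condition (1) of Definition \ref{sqb} is read off from $Q_{\Gamma,L}$ via $Q_\Gamma$, condition (2) comes from the type (iii) relations, finite dimensionality from the monomial relations, and symmetry from the socle-supported linear form on a path basis, using that a cyclic rotation of a full special-cycle power is again a special-cycle power (you are in fact more detailed than the paper, which dispatches condition (2) in a single sentence). The only point worth tightening is your last non-degeneracy step: relation (i) only identifies socle elements sitting at the same edge, so for $C_\alpha$, $C_\beta$ ending at different edges one should instead multiply the offending element by an idempotent $e_{\bar h}$ (which stays in the one-sided ideal) to isolate a single special-cycle power of form-value $1$ -- the same minor elision occurs in the paper's own proof.
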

	
	\begin{proof}
		The relation (ii) in $I$ ensures that $A$ is finite dimensional. Now we fix a $k$-basis $\mathcal{B}$ of $A$ consisting of pairwise distinct nonzero paths of quiver $Q$. Assume that $\mathcal{B}$ is a $k$-linear basis of $A$ that is consist of paths. Define a $k$-linear map $\psi$ on the basis elements $\mathcal{B}$ by
		$$
		\psi(b):=\left\{
		\begin{array}{*{3}{lll}}
			1& ,& \text{if}\; b\in\soc(A);\\
			0&,& \text{otherwise.}
		\end{array}
		\right.
		$$
		 Then it defines a non-degenerate linear form on $A$. The form is symmetric since if $uv$ is a path in the socle, then $uv=\pi(C_\alpha^{\m(C_\alpha)})$ for some special cycle $C_\alpha$ in $\Gamma$, and then $vu=\pi(C_\beta^{\m(C_\beta)})$ for some special cycle $C_\beta$ in $\Gamma$. Therefore, it is in $\soc(A)$. Moreover, if there exist a left or right ideal $J$, such that $\psi(J)=0$, then by the definition of $\psi$, $J\cap\soc(A)=0$, which means $J=0$. Therefore, $A$ is symmetric.
		
		Indeed, we can regard the quiver $Q$ as a subquiver of the quiver $Q_\Gamma$. Therefore, since every vertex $i$ in $Q_\Gamma$, there are at most two arrows starting at $i$ and there are at most two arrows ending at $i$, the same condition (1) in Definition \ref{sqb} also holds for $Q$. The relations (iii) ensure the condition (2) in Definition \ref{sqb} holds. Therefore, $A$ is special quasi-biserial.
	\end{proof}
	
	\begin{remark}
		Indeed, if a special quasi-biserial algebra is an sf Brauer graph algebra, then each single arrow in its quiver is starting from either a labeled edge $\bar h\in L$ or some $\bar h\in E(\Gamma)'$, an edge connected to a truncated vertex.
	\end{remark}
	
\subsection{Representation-finite sf Brauer graph algebras}
\
\smallskip

	Since extending the scope of Brauer graph algebras to sf Brauer graph algebras, we have found that the class of algebras of finite representation type has significantly expanded.
	
	\begin{example}\label{RFS}
		Let $\Gamma_L$ be an sf Brauer graph with multiplicity $1$ at each vertex which is represented as follows (with clockwise cyclic orientations at each vertex).
		
		\begin{center}

			\tikzset{every picture/.style={line width=0.75pt}} 
			
			\begin{tikzpicture}[x=0.75pt,y=0.75pt,yscale=-1,xscale=1]
					
					\draw  [fill={rgb, 255:red, 0; green, 0; blue, 0 }  ,fill opacity=1 ] (200,126) .. controls (200,123.24) and (202.24,121) .. (205,121) .. controls (207.76,121) and (210,123.24) .. (210,126) .. controls (210,128.76) and (207.76,131) .. (205,131) .. controls (202.24,131) and (200,128.76) .. (200,126) -- cycle ;
					\draw  [fill={rgb, 255:red, 0; green, 0; blue, 0 }  ,fill opacity=1 ] (320,126) .. controls (320,123.24) and (322.24,121) .. (325,121) .. controls (327.76,121) and (330,123.24) .. (330,126) .. controls (330,128.76) and (327.76,131) .. (325,131) .. controls (322.24,131) and (320,128.76) .. (320,126) -- cycle ;
					\draw  [fill={rgb, 255:red, 0; green, 0; blue, 0 }  ,fill opacity=1 ] (250,126) .. controls (250,123.24) and (252.24,121) .. (255,121) .. controls (257.76,121) and (260,123.24) .. (260,126) .. controls (260,128.76) and (257.76,131) .. (255,131) .. controls (252.24,131) and (250,128.76) .. (250,126) -- cycle ;
					\draw  [fill={rgb, 255:red, 0; green, 0; blue, 0 }  ,fill opacity=1 ] (80,126) .. controls (80,123.24) and (82.24,121) .. (85,121) .. controls (87.76,121) and (90,123.24) .. (90,126) .. controls (90,128.76) and (87.76,131) .. (85,131) .. controls (82.24,131) and (80,128.76) .. (80,126) -- cycle ;
					\draw [line width=1.5]    (85,126) -- (205,126) ;
					\draw [line width=1.5]    (255,126) -- (325,126) ;
					\draw [line width=1.5]  [dash pattern={on 5.63pt off 4.5pt}]  (205,126) .. controls (140,59.5) and (214,58.5) .. (325,126) ;
					\draw [line width=1.5]  [dash pattern={on 5.63pt off 4.5pt}]  (205,126) .. controls (291,61.5) and (393,60.5) .. (325,126) ;
					\draw [line width=1.5]    (205,126) .. controls (242,158.5) and (289,160.5) .. (325,126) ;
				
				\draw (140,105) node [anchor=north west][inner sep=0.75pt]   [align=left] {$\bar 2$};
				\draw (233,62) node [anchor=north west][inner sep=0.75pt]   [align=left] {$\bar 4$};
				\draw (278,62) node [anchor=north west][inner sep=0.75pt]   [align=left] {$\bar 5$};
				\draw (263,132) node [anchor=north west][inner sep=0.75pt]   [align=left] {$\bar 1$};
				\draw (268,110) node [anchor=north west][inner sep=0.75pt]   [align=left] {$\bar 3$};

			\end{tikzpicture}
			
		\end{center}		
		The corresponding sf Brauer graph algebra is $A=kQ_{\Gamma,L}/I_{\Gamma,L}$ where the quiver $Q_{\Gamma,L}$ is
		$$
		\begin{tikzcd}
			& 1 \arrow[ld, "\alpha_1"'] \arrow[rd, "\beta_1"] &                         \\
			2 \arrow[rd, "\alpha_2"'] & 5 \arrow[u, "\alpha_4"]                         & 3 \arrow[ld, "\beta_2"] \\
			& 4 \arrow[u, "\alpha_3"]                         &                        
		\end{tikzcd}$$
		and the ideal $I_{\Gamma,L}$ is generated by the relations
		\begin{enumerate}[(i)]
			\item $\alpha_2\alpha_1=\beta_2\beta_1$;
			\item $\alpha_4\alpha_3\alpha_2\alpha_1\alpha_4=\alpha_3\alpha_2\alpha_1\alpha_4\alpha_3=\alpha_2\alpha_1\alpha_4\alpha_3\alpha_2=\alpha_1\alpha_4\alpha_3\alpha_2\alpha_1=0$,\\ $\alpha_4\alpha_3\beta_2\beta_1\alpha_4=\alpha_3\beta_2\beta_1\alpha_4\alpha_3=\beta_2\beta_1\alpha_4\alpha_3\beta_2=\beta_1\alpha_4\alpha_3\beta_2\beta_1=0$;
			\item $\alpha_{1}\alpha_{4}\alpha_{3}\beta_2=\beta_1\alpha_{4}\alpha_{3}\alpha_{2}=0$.
		\end{enumerate}
		The indecomposable projective module of $A$ are 
		$$P_1=\begin{array}{*{3}{lll}}
			&1&\\2&&3\\&4&\\&5&\\&1&
		\end{array},\quad P_2=\begin{array}{*{3}{lll}}
			&2&\\&4&\\&5&\\&1&\\&2&
		\end{array},\quad P_3=\begin{array}{*{3}{lll}}
			&3&\\&4&\\&5&\\&1&\\&3&
		\end{array},\quad P_4=\begin{array}{*{3}{lll}}
			&4&\\&5&\\&1&\\2&&3\\&4&
		\end{array}, \quad P_5=\begin{array}{*{3}{lll}}
			&5&\\&1&\\2&&3\\&4&\\&5&
		\end{array}.$$
		
		Indeed, by \cite[Example 6.7]{LL}, this sf Brauer graph algebra is derived equivalent to the algebra $B=\qa$ which is given by the quiver
		\[\begin{tikzcd}
			3 && 2 \\
			& 1 \\
			4 && 5
			\arrow["{\beta_3}"', from=1-1, to=2-2]
			\arrow["{\beta_2}"', from=1-3, to=1-1]
			\arrow["{\beta_1}"', from=2-2, to=1-3]
			\arrow["{\gamma_1}"', shift right, curve={height=6pt}, from=2-2, to=3-1]
			\arrow["{\delta_1}"', curve={height=6pt}, from=2-2, to=3-3]
			\arrow["{\gamma_2}"', curve={height=6pt}, from=3-1, to=2-2]
			\arrow["{\delta_2}"', shift right, curve={height=6pt}, from=3-3, to=2-2]
		\end{tikzcd}\]
		with relations $0=\beta_1\gamma_2=\gamma_1\beta_3=\beta_1\delta_2=\delta_1\beta_3=\gamma_1\delta_2=\delta_1\gamma_2=\beta_2\beta_1\beta_3\beta_2, \; \beta_3\beta_2\beta_1=\gamma_2\gamma_1=\delta_2\delta_1$ which is the self-injective algebra of finite representation type of type $(D_5,1,1)$ in \cite{BG}.
		
		Indeed, $B$ is a Brauer configuration algebra which is defined in \cite{GS2}. Thus $A$ is of finite representation type which is not a Brauer tree algebra. 
	\end{example}
	
	Now we begin to give some explicit description about sf Brauer graph algebras which are of finite representation type.
	
	\begin{definition}
		Let $\Gamma_L=(\Gamma=(V,H,s,\iota,\rho),L)$ be a labeled ribbon graph. Then we can define a new ribbon graph $\Gamma\backslash L=(V',H',s',\iota',\rho')$ as follows.
		\begin{itemize}
			\item $V'=V$, and $H'\subseteq H$ contains the half-edges which are not in any edges in $L$. 
			\item $s'$ and $\iota'$ are the restriction on $H'$.
			\item For each $h\in H'$, $\rho'(h)=\rho^i(h)$, where $i$ is the smallest positive integer, such that $\rho^i(h)$ is not a half-edge in any edges in $L$.
		\end{itemize}
	\end{definition}
	
	We show that the new defined ribbon graph can naturally induce a subalgebra of each sf Brauer graph algebra.
	
	\begin{proposition}\label{prop:eAe}
		Let $A$ be an sf Brauer graph algebra associated with the sf Brauer graph $(\Gamma_L,\m)$. Denote by $B$, the Brauer graph algebra corresponding to the Brauer graph $(\Gamma\backslash L,\m)$. Then $$B\cong eAe$$ as $k$-algebras, where $e=\sum_{\bar h \in E(\Gamma\backslash L)} e_{\bar h}$ and $e_{\bar h}$ is the idempotent corresponding to the edge $\bar h$ in $A$.
	\end{proposition}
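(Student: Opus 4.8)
The plan is to exhibit an explicit surjective $k$-algebra homomorphism $\bar\phi\colon B\to eAe$ and then to verify that $\dim_k B=\dim_k eAe$, whence $\bar\phi$ is an isomorphism. Throughout I identify the vertices of the Brauer graph quiver $Q_{\Gamma\backslash L}$ with the edge set $E(\Gamma\backslash L)=E(\Gamma)\backslash L$, so that they are exactly the vertices $\bar h$ of $Q_{\Gamma,L}$ occurring in $e=\sum_{\bar h\notin L}e_{\bar h}$.

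First I construct $\phi\colon kQ_{\Gamma\backslash L}\to eAe$. Every arrow of $Q_{\Gamma\backslash L}$ has the form $\alpha'_h\colon\bar h\to\overline{\rho'(h)}$ for a half-edge $h$ of $\Gamma$ with $\bar h\notin L$ and $s(h)$ non-truncated in $\Gamma\backslash L$; writing $i\ge 1$ for the least integer with $\overline{\rho^i(h)}\notin L$ (so $\rho'(h)=\rho^i(h)$), I set $\phi(\alpha'_h):=\pi(\alpha_{\rho^{i-1}(h)}\cdots\alpha_{\rho(h)}\alpha_h)$, the image in $A$ of the ``arc'' around $s(h)$ joining $\bar h$ to its first unlabeled successor. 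Such an arc follows the special cycle $C_{\alpha_h}$ of $Q_\Gamma$ around $s(h)$ for fewer than $\m(C_{\alpha_h})\cdot\mathrm{val}(s(h))$ steps, so it is annihilated neither by the monomial relations (ii), (iii) of $I_{\Gamma,L}$ nor altered by (i) (cf.\ the Loewy structure of $Ae_{\bar h}$ from Proposition~\ref{sf-BGA is ssqb}); hence $\phi(\alpha'_h)$ is a nonzero element of $e_{\overline{\rho'(h)}}Ae_{\bar h}\subseteq eAe$, and sending each vertex idempotent $e_{\bar h}$ to itself extends $\phi$ to an algebra map. For surjectivity, recall that a $k$-basis of $eAe$ consists of the $\pi$-images of the nonzero paths of $Q_\Gamma$ between unlabeled edges; since a labeled vertex $\bar h$ of $Q_{\Gamma,L}$ has a \emph{unique} outgoing arrow — the two arrows of $Q_\Gamma$ leaving a labelable edge share a target and are identified in $Q_{\Gamma,L}$ — relation~(iii) of $I_{\Gamma,L}$ forces every such nonzero path, between consecutive unlabeled edges, to trace out exactly one of the arcs above; so every basis element is a product of the $\phi(\alpha'_h)$, and $\phi$ is onto.

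Next I check $\phi(I_{\Gamma\backslash L})=0$, so that $\phi$ descends to $\bar\phi\colon B\to eAe$. A special cycle $C'_{\alpha'}$ of $Q_{\Gamma\backslash L}$ around a vertex $v$ is the concatenation of the arcs around $v$, so $\phi(C'_{\alpha'})=\pi(C_\beta)$ for the special cycle $C_\beta$ of $Q_\Gamma$ around $v$ with the matching endpoint, and $\phi((C'_{\alpha'})^{\m(v)})=\pi(C_\beta^{\m(v)})$. Consequently: relation~(i) of $B$ (equality of the two special-cycle powers at an edge $\bar h\in E(\Gamma\backslash L)'$) maps to the equality $\pi(C_\gamma^{\m})=\pi(C_\delta^{\m})$ in $A$ of the two special-cycle powers of $Q_\Gamma$ ending at $\bar h$, which is relation~(i) of $I_{\Gamma,L}$ multiplied on the left and right by the common prefix and suffix of these two paths (note $\bar h\in E(\Gamma\backslash L)'\Rightarrow\bar h\in E(\Gamma)'$, since valency and multiplicity can only decrease, so this relation of $I_{\Gamma,L}$ is in fact imposed); relation~(ii) of $B$, $(C'_{\alpha'})^{\m}\alpha'=0$, maps to $\pi(C_\beta^{\m}\,\alpha_{\rho^{i-1}(h)}\cdots\alpha_h)$, which factors on the left as $\pi(C_\beta^{\m}\beta)$ with $\beta=\alpha_{\rho^{i-1}(h)}$, vanishing by relation~(ii) of $I_{\Gamma,L}$; and relation~(iii) of $B$ (a monomial $\alpha'\beta'$ with $\sigma'(\alpha')\neq\beta'$) maps to a path of $Q_\Gamma$ that, around the shared vertex, turns off one special cycle onto a different one, hence contains a subpath of the shape forbidden by relation~(iii) of $I_{\Gamma,L}$ and is zero. (Loops and vertices that become truncated after deleting $L$ need minor separate attention but are entirely analogous.)

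Finally, the dimension count. By Proposition~\ref{sf-BGA is ssqb} and a direct reading of $I_{\Gamma,L}$, for $\bar h\notin L$ the projective $P^A_{\bar h}=Ae_{\bar h}$ has top $S_{\bar h}$, socle $S_{\bar h}$, and $\rad P^A_{\bar h}=U_h+V_h$ with $U_h,V_h$ uniserial of composition lengths $\m(s(h))\,\mathrm{val}(s(h))$ and $\m(s(\iota(h)))\,\mathrm{val}(s(\iota(h)))$, the composition factors of $U_h$ (resp.\ $V_h$) being the edges met along $\m(s(h))$ (resp.\ $\m(s(\iota(h)))$) full turns around $s(h)$ (resp.\ $s(\iota(h))$) in $\Gamma$. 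Among these, the factors at unlabeled edges number $\m(s(h))\,\mathrm{val}_{\Gamma\backslash L}(s(h))$, since one turn around $s(h)$ meets the unlabeled half-edges there exactly $\mathrm{val}_{\Gamma\backslash L}(s(h))$ times; as $U_h\cap V_h=\soc P^A_{\bar h}=S_{\bar h}$ sits at the unlabeled edge $\bar h$, it follows that $\dim_k eAe_{\bar h}=\dim_k eP^A_{\bar h}=\m(s(h))\,\mathrm{val}_{\Gamma\backslash L}(s(h))+\m(s(\iota(h)))\,\mathrm{val}_{\Gamma\backslash L}(s(\iota(h)))$. This is precisely $\dim_k P^B_{\bar h}$ for the Brauer graph algebra of $(\Gamma\backslash L,\m)$, whose edge $\bar h$ has the same endpoints $s(h),s(\iota(h))$. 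Summing over $\bar h\notin L$ gives $\dim_k eAe=\dim_k B$, so the surjection $\bar\phi$ is an isomorphism. The main obstacle is the verification in the third paragraph — matching the special cycles of $Q_{\Gamma\backslash L}$ with the $\pi$-images of those of $Q_\Gamma$ and pinpointing the forbidden subpaths — together with the bookkeeping for the degenerate configurations (loops, vertices truncated only after removing $L$, and the single-edge graph); all of this relies on Proposition~\ref{sf-BGA is ssqb} and the explicit relations of $I_{\Gamma,L}$.
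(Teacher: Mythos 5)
Your overall route is the same as the paper's: the paper simply exhibits the arrow-to-``arc'' assignment $\alpha_h'\mapsto \pi(\alpha_{\rho^{i-1}(h)}\cdots\alpha_h)$ (its Figure \ref{fig:iso-eAe}) and declares the verification straightforward, whereas you carry out the verification (compatibility with the three families of relations, surjectivity, and a dimension count). That elaboration is correct in structure, and your observations that a labelable edge's two outgoing arrows of $Q_\Gamma$ share a target (so a labeled vertex of $Q_{\Gamma,L}$ has a unique outgoing arrow) and that $\bar h\in E(\Gamma\backslash L)'$ forces $\bar h\in E(\Gamma)'$ are exactly the points one needs.

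There is, however, one inaccurate intermediate claim in your dimension count: for an unlabeled edge $\bar h$ it is \emph{not} true in general that $U_h\cap V_h=\soc P^A_{\bar h}=S_{\bar h}$. In an sf Brauer graph algebra the two uniserials can merge strictly above the socle, because relation (i) of $I_{\Gamma,L}$ is the \emph{stripped} commutativity relation: the two cycle paths become equal as soon as their remaining arrows (those emanating from labeled edges) are identified in $Q_{\Gamma,L}$. Concretely, in Example \ref{example sf-BGA} one has $\alpha_3\alpha_2\alpha_1=\beta_3\alpha_2\beta_1$, so for $P_1$ the intersection $U\cap V$ is two-dimensional, with composition factors $S_{\bar 4}$ and $S_{\bar 1}$. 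Your final formula $\dim_k eP^A_{\bar h}=\m(s(h))\,\mathrm{val}_{\Gamma\backslash L}(s(h))+\m(s(\iota(h)))\,\mathrm{val}_{\Gamma\backslash L}(s(\iota(h)))$ nevertheless stands, because the merge can only occur along arrows starting at labeled edges, so every composition factor of $U_h\cap V_h$ strictly above the socle lies at a labeled edge; hence $e(U_h\cap V_h)$ is still one-dimensional, which is all the count needs. You should phrase the subtraction in terms of $e(U_h\cap V_h)$ rather than asserting $U_h\cap V_h=S_{\bar h}$. A further small simplification: the degenerate case of ``vertices truncated only after deleting $L$'' that you defer cannot occur, since the definition of a labeled ribbon graph requires every vertex of valency at least $2$ to keep at least two unlabeled half-edges, so $\mathrm{val}_{\Gamma\backslash L}(v)\geq 2$ whenever $\mathrm{val}_{\Gamma}(v)\geq 2$.
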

	
	\begin{proof}
		It is straightforward since each vertex corresponding to a labeled edge in $L$ vanishes in $Q_{\Gamma\backslash L}$, and each single arrow starting from a labeled edge in $Q_{\Gamma,L}$ will been composed into new arrows in $Q_{\Gamma\backslash L}$. To be more specific, see Figure \ref{fig:iso-eAe}.
		\begin{figure}[ht]
			\centering
				\tikzset{every picture/.style={line width=0.75pt}} 
			
			\begin{tikzpicture}[x=0.75pt,y=0.75pt,yscale=-1,xscale=1]
				
				\draw  [fill={rgb, 255:red, 0; green, 0; blue, 0 }  ,fill opacity=1 ] (201,131) .. controls (201,128.24) and (203.24,126) .. (206,126) .. controls (208.76,126) and (211,128.24) .. (211,131) .. controls (211,133.76) and (208.76,136) .. (206,136) .. controls (203.24,136) and (201,133.76) .. (201,131) -- cycle ;
				\draw [line width=1.5]    (126,131) -- (206,131) ;
				\draw [line width=1.5]    (206,131) -- (286,131) ;
				\draw [line width=1.5]  [dash pattern={on 5.63pt off 4.5pt}]  (146,81.5) -- (206,131) ;
				\draw [line width=1.5]  [dash pattern={on 5.63pt off 4.5pt}]  (206,131) -- (206,58) ;
				\draw [line width=1.5]  [dash pattern={on 5.63pt off 4.5pt}]  (206,131) -- (259,79.5) ;
				\draw  [draw opacity=0] (166.38,125.49) .. controls (167.25,119.15) and (169.61,113.28) .. (173.1,108.24) -- (206,131) -- cycle ; \draw   (166.38,125.49) .. controls (167.25,119.15) and (169.61,113.28) .. (173.1,108.24) ;  
				\draw  [draw opacity=0] (178.69,101.77) .. controls (184.47,96.37) and (191.85,92.66) .. (200.05,91.44) -- (206,131) -- cycle ; \draw   (178.69,101.77) .. controls (184.47,96.37) and (191.85,92.66) .. (200.05,91.44) ;  
				\draw  [draw opacity=0] (238.6,107.82) .. controls (242.42,113.18) and (244.96,119.53) .. (245.74,126.42) -- (206,131) -- cycle ; \draw   (238.6,107.82) .. controls (242.42,113.18) and (244.96,119.53) .. (245.74,126.42) ;  
				\draw  [draw opacity=0][dash pattern={on 0.84pt off 2.51pt}] (210.8,91.28) .. controls (218.83,92.24) and (226.13,95.58) .. (231.97,100.57) -- (206,131) -- cycle ; \draw  [dash pattern={on 0.84pt off 2.51pt}] (210.8,91.28) .. controls (218.83,92.24) and (226.13,95.58) .. (231.97,100.57) ;  
				\draw  [draw opacity=0][dash pattern={on 0.84pt off 2.51pt}] (245.39,138.01) .. controls (242.07,156.76) and (225.7,171) .. (206,171) .. controls (185.98,171) and (169.4,156.3) .. (166.46,137.1) -- (206,131) -- cycle ; \draw  [dash pattern={on 0.84pt off 2.51pt}] (245.39,138.01) .. controls (242.07,156.76) and (225.7,171) .. (206,171) .. controls (185.98,171) and (169.4,156.3) .. (166.46,137.1) ;  
				\draw  [fill={rgb, 255:red, 0; green, 0; blue, 0 }  ,fill opacity=1 ] (431,131) .. controls (431,128.24) and (433.24,126) .. (436,126) .. controls (438.76,126) and (441,128.24) .. (441,131) .. controls (441,133.76) and (438.76,136) .. (436,136) .. controls (433.24,136) and (431,133.76) .. (431,131) -- cycle ;
				\draw [line width=1.5]    (356,131) -- (436,131) ;
				\draw [line width=1.5]    (436,131) -- (516,131) ;
				\draw  [draw opacity=0] (396.38,125.49) .. controls (399.06,106.01) and (415.78,91) .. (436,91) .. controls (455.7,91) and (472.07,105.24) .. (475.39,123.99) -- (436,131) -- cycle ; \draw   (396.38,125.49) .. controls (399.06,106.01) and (415.78,91) .. (436,91) .. controls (455.7,91) and (472.07,105.24) .. (475.39,123.99) ;  
				\draw  [draw opacity=0][dash pattern={on 0.84pt off 2.51pt}] (475.39,138.01) .. controls (472.07,156.76) and (455.7,171) .. (436,171) .. controls (415.98,171) and (399.4,156.3) .. (396.46,137.1) -- (436,131) -- cycle ; \draw  [dash pattern={on 0.84pt off 2.51pt}] (475.39,138.01) .. controls (472.07,156.76) and (455.7,171) .. (436,171) .. controls (415.98,171) and (399.4,156.3) .. (396.46,137.1) ;  
				\draw   (163.48,112.97) -- (173.4,107.67) -- (175.71,118.68) ;
				\draw   (191.96,85.82) -- (201.94,91.02) -- (194.16,99.14) ;
				\draw   (250.04,117.82) -- (245.53,128.13) -- (236.9,120.92) ;
				\draw   (480.04,114.82) -- (475.53,125.13) -- (466.9,117.92) ;
				
				\draw (111,122) node [anchor=north west][inner sep=0.75pt]   [align=left] {$h$};
				\draw (341,122) node [anchor=north west][inner sep=0.75pt]   [align=left] {$h$};
				\draw (143,104) node [anchor=north west][inner sep=0.75pt]   [align=left] {$\alpha_h$};
				\draw (174,72) node [anchor=north west][inner sep=0.75pt]   [align=left] {$\alpha_1$};
				\draw (255,104) node [anchor=north west][inner sep=0.75pt]   [align=left] {$\alpha_n$};
				\draw (430,70) node [anchor=north west][inner sep=0.75pt]   [align=left] {$\alpha_h$};

			\end{tikzpicture}
			\caption{$\alpha_h$ in $Q_{\Gamma,L}$ and in $Q_{\Gamma\backslash L}$.}
			\label{fig:iso-eAe}
		\end{figure}
		This isomorphism map each $\alpha_h$ in $Q_{\Gamma\backslash L}$ into the path $\alpha_n\cdots\alpha_1\alpha_h$ in $Q_{\Gamma,L}$, where $\alpha_1,\cdots,\alpha_n$ are arrows starting from some labeled edges in $\Gamma_L$.
	\end{proof}
	
	We have the following corollary.
	
	\begin{corollary}\label{LBTA is RFS}
		Let $A$ be an sf Brauer graph algebra associated with the sf Brauer graph $(\Gamma_L,\m)$. If $A$ is of finite representation type, then $(\Gamma\backslash L,\m)$ is a Brauer tree.
	\end{corollary}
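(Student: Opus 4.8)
The plan is to transfer representation-finiteness from $A$ to the idempotent truncation $eAe$ studied in Proposition~\ref{prop:eAe}, and then invoke the classification of representation-finite Brauer graph algebras. Concretely, let $e=\sum_{\bar h\in E(\Gamma\backslash L)}e_{\bar h}$ as in Proposition~\ref{prop:eAe}, so that the Brauer graph algebra $B$ associated with $(\Gamma\backslash L,\m)$ is isomorphic to $eAe$ as a $k$-algebra (here $e\neq 0$, since by definition a labeled ribbon graph has at least two half-edges on non-labeled edges at every vertex of valency $\ge 2$, and a valency-$1$ vertex can only lie on a non-labelable edge unless $\Gamma$ is a single edge). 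It therefore suffices to prove that $B$ is of finite representation type; once this is known, Theorem~\ref{thm:BTA-RFS}(2) immediately yields that $B$ is a Brauer tree algebra, i.e.\ that $(\Gamma\backslash L,\m)$ is a Brauer tree.

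The heart of the argument is the standard fact that an idempotent truncation of a representation-finite finite-dimensional algebra is again representation-finite. I would recall the short proof: the Schur functor $e\cdot(-)\colon A\text{-mod}\to eAe\text{-mod}$ is exact and, since $e(Ae\otimes_{eAe}N)\cong eAe\otimes_{eAe}N\cong N$ for every $eAe$-module $N$, it is dense. Hence every indecomposable $eAe$-module occurs, up to isomorphism, as a direct summand of $eM$ for some indecomposable $A$-module $M$; as $A$ is representation-finite there are only finitely many such $M$ up to isomorphism, so there are only finitely many indecomposable $eAe$-modules up to isomorphism. Applying this with our specific $e$ shows that $B\cong eAe$ is representation-finite, and combined with the previous paragraph this completes the proof.

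There is no serious obstacle; the two points that deserve a line of care are (i) that Proposition~\ref{prop:eAe} provides an honest $k$-algebra isomorphism $B\cong eAe$ (which it does, via the explicit matching of each arrow $\alpha_h$ of $Q_{\Gamma\backslash L}$ with a path $\alpha_n\cdots\alpha_1\alpha_h$ of $Q_{\Gamma,L}$ through arrows starting from labeled edges), and (ii) that $\Gamma\backslash L$ is connected, so that ``Brauer tree'' is literally meaningful. For (ii): if $\bar h=\{h,\iota(h)\}$ is labelable, then $\iota(\rho(h))=\rho(\iota(h))$, which forces the edge $\overline{\rho(h)}$ to be incident to the same pair of vertices $s(h),s(\iota(h))$; iterating this while using that each vertex of valency $\ge 2$ carries at least two half-edges lying on non-labeled edges produces a non-labeled edge joining $s(h)$ and $s(\iota(h))$, so deleting the edges of $L$ cannot disconnect a connected $\Gamma$.
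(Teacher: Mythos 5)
Your proposal is correct and follows essentially the same route as the paper: identify $B\cong eAe$ via Proposition \ref{prop:eAe}, transfer representation-finiteness from $A$ to $eAe$, and conclude with Theorem \ref{thm:BTA-RFS}(2). The only difference is that the paper simply cites Bongartz \cite{Bo} for the transfer step, whereas you prove it directly with the Schur-functor density argument (and add the harmless extra checks on $e\neq 0$ and connectedness).
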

	\begin{proof}
	By \cite{Bo}, if there exists an idempotent $e$ such that $eAe$ is of infinite representation type, then so is $A$.  Since $A$ is of representation type, by Proposition \ref{prop:eAe}, the Brauer graph algebra $B$ associated with $(\Gamma\backslash L,\m)$ is also of finite representation type. This is equal to say that the Brauer graph $(\Gamma\backslash L,\m)$ is a Brauer tree (see Theorem \ref{thm:BTA-RFS}).
	\end{proof}

	However, the converse of the above corollary does not hold. We provide the following example.

	\begin{example}\label{exa:non-rep-finite-FBTA}
		Let $\Gamma_L$ be an sf Brauer graph with multiplicity $1$ at each vertex which is represented as follows (with clockwise cyclic orientations at each vertex).
		
			\begin{center}

			\tikzset{every picture/.style={line width=0.75pt}} 
			
			\begin{tikzpicture}[x=0.75pt,y=0.75pt,yscale=-1,xscale=1]
					
					\draw  [fill={rgb, 255:red, 0; green, 0; blue, 0 }  ,fill opacity=1 ] (200,126) .. controls (200,123.24) and (202.24,121) .. (205,121) .. controls (207.76,121) and (210,123.24) .. (210,126) .. controls (210,128.76) and (207.76,131) .. (205,131) .. controls (202.24,131) and (200,128.76) .. (200,126) -- cycle ;
					\draw  [fill={rgb, 255:red, 0; green, 0; blue, 0 }  ,fill opacity=1 ] (320,126) .. controls (320,123.24) and (322.24,121) .. (325,121) .. controls (327.76,121) and (330,123.24) .. (330,126) .. controls (330,128.76) and (327.76,131) .. (325,131) .. controls (322.24,131) and (320,128.76) .. (320,126) -- cycle ;
					\draw  [fill={rgb, 255:red, 0; green, 0; blue, 0 }  ,fill opacity=1 ] (250,126) .. controls (250,123.24) and (252.24,121) .. (255,121) .. controls (257.76,121) and (260,123.24) .. (260,126) .. controls (260,128.76) and (257.76,131) .. (255,131) .. controls (252.24,131) and (250,128.76) .. (250,126) -- cycle ;
					\draw  [fill={rgb, 255:red, 0; green, 0; blue, 0 }  ,fill opacity=1 ] (80,126) .. controls (80,123.24) and (82.24,121) .. (85,121) .. controls (87.76,121) and (90,123.24) .. (90,126) .. controls (90,128.76) and (87.76,131) .. (85,131) .. controls (82.24,131) and (80,128.76) .. (80,126) -- cycle ;
					\draw [line width=1.5]    (85,126) -- (205,126) ;
					\draw [line width=1.5]    (130,206) -- (205,126) ;
					\draw  [fill={rgb, 255:red, 0; green, 0; blue, 0 },fill opacity=1]
(125,206) .. controls (125,203.24) and (127.24,201) .. (130,201)
.. controls (132.76,201) and (135,203.24) .. (135,206)
.. controls (135,208.76) and (132.76,211) .. (130,211)
.. controls (127.24,211) and (125,208.76) .. (125,206) -- cycle ;
					\draw [line width=1.5]    (255,126) -- (325,126) ;
					\draw [line width=1.5]  [dash pattern={on 5.63pt off 4.5pt}]  (205,126) .. controls (140,59.5) and (214,58.5) .. (325,126) ;
					\draw [line width=1.5]  [dash pattern={on 5.63pt off 4.5pt}]  (205,126) .. controls (291,61.5) and (393,60.5) .. (325,126) ;
					\draw [line width=1.5]    (205,126) .. controls (242,158.5) and (289,160.5) .. (325,126) ;
				
				\draw (140,105) node [anchor=north west][inner sep=0.75pt]   [align=left] {$\bar 2$};
				\draw (150,155) node [anchor=north west][inner sep=0.75pt]   [align=left] {$\bar 6$};
				\draw (233,62) node [anchor=north west][inner sep=0.75pt]   [align=left] {$\bar 4$};
				\draw (278,62) node [anchor=north west][inner sep=0.75pt]   [align=left] {$\bar 5$};
				\draw (263,132) node [anchor=north west][inner sep=0.75pt]   [align=left] {$\bar 1$};
				\draw (268,110) node [anchor=north west][inner sep=0.75pt]   [align=left] {$\bar 3$};

			\end{tikzpicture}
			
		\end{center}
		Then $(\Gamma\backslash L,\mathfrak m)$ is a Brauer tree. However, the corresponding sf Brauer graph algebra is $A=kQ_{\Gamma,L}/I_{\Gamma,L}$, where the quiver $Q_{\Gamma,L}$ is given by
\[
\begin{tikzcd}
6 \arrow[dd] & 1 \arrow[l] \arrow[rd] &              \\
             & 5 \arrow[u]            & 3 \arrow[ld] \\
2 \arrow[r]  & 4 \arrow[u]            &             
\end{tikzcd}
\]
Its universal cover contains a full subquiver of type $\widetilde E_6$. Hence, by \cite[Lemma 3.3]{Ga1981}, the algebra $A$ is of infinite representation type.
	\end{example}

	\subsection{Kauer moves}
	\
	\smallskip
	
	In this section, we want to show some derived equivalent examples of sf Brauer graph algebras. First of all, recall the two-term tilting complexes constructed by Okuyama \cite{Oku}. For a module $M$, denote by $P(M)$ the projective cover of $M$.
	
	\begin{theorem}\label{Okuyama}
		Let $A$ be a symmetric $k$-algebra. Let $I,I'$ be disjoint sets of simple $A$-modules such that $I\cup I'$ is a complete set of representatives of the isomorphism classes of simple $A$-modules.
		
		For any $S\in I$, let $T_S$ be the complex $Q\rightarrow P(S)$ with non-zero terms in degrees zero and one, and such that the nonzero differential is the right $\mathrm{add}(P(I'))$-approximation of $P(S)$ with $P(I')=\{P(S)\;|\;S\in I'\}$. For any $S'\in I'$ consider $P(S')$ as a complex concentrated in degree zero.
		
		Then the complex $T=(\bigoplus_{S\in I}T_S)\oplus(\bigoplus_{S'\in I'}P(S'))$ is a tilting complex for $A$.
	\end{theorem}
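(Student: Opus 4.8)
The plan is to verify directly that $T$ satisfies the two defining properties of a tilting complex in $K^b(\mathrm{proj}\,A)$ and then to invoke Rickard's derived Morita theory. Write $T=X\oplus Y$ with $X=\bigoplus_{S\in I}T_S$ and $Y=\bigoplus_{S'\in I'}P(S')$, and recall that each $T_S$ is a two-term complex with entries $Q_S$ and $P(S)$ in two consecutive degrees whose only nonzero differential $d_S\colon Q_S\to P(S)$ is a right $\mathrm{add}(P(I'))$-approximation; in particular $Q_S\in\mathrm{add}(P(I'))$, so $Y$ and all the $Q_S$ are direct sums of copies of the $P(S')$ with $S'\in I'$. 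The generation condition is the easy half: each indecomposable projective $P(S')$, $S'\in I'$, is a summand of $Y$ and hence lies in the thick subcategory $\mathrm{thick}(T)$ generated by $T$; and for $S\in I$ the complex $T_S$ sits in a single triangle together with $Q_S$ and $P(S)$ in $K^b(\mathrm{proj}\,A)$, so from $Q_S,T_S\in\mathrm{thick}(T)$ one obtains $P(S)\in\mathrm{thick}(T)$. Thus $\mathrm{thick}(T)$ contains every indecomposable projective and therefore equals $K^b(\mathrm{proj}\,A)$.

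Next I would establish the orthogonality condition $\mathrm{Hom}_{K^b(\mathrm{proj}\,A)}(T,T[n])=0$ for $n\neq 0$. Since every summand of $T$ is concentrated in two consecutive degrees, $T$ and $T[n]$ have disjoint cohomological support as soon as $|n|\geq 2$, so those $\mathrm{Hom}$-spaces vanish for free; and because $A$ is symmetric one has the self-duality $\mathrm{Hom}_{K^b(\mathrm{proj}\,A)}(U,V)\cong D\,\mathrm{Hom}_{K^b(\mathrm{proj}\,A)}(V,U)$ (with $D=\mathrm{Hom}_k(-,k)$), so it suffices to treat $n=1$. Decomposing $T=X\oplus Y$, the contributions $\mathrm{Hom}(X,Y[1])$ and $\mathrm{Hom}(Y,Y[1])$ vanish on support grounds. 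For $\mathrm{Hom}_{K^b}(T_S,T_{S'}[1])$ I would write a chain map out degree by degree: it reduces to a single morphism $Q_S\to P(S')$, and the null-homotopies among these already include every map of the form $h\mapsto d_{S'}\circ h$ with $h\in\mathrm{Hom}_A(Q_S,Q_{S'})$; since $Q_S\in\mathrm{add}(P(I'))$ and $d_{S'}$ is a right $\mathrm{add}(P(I'))$-approximation, post-composition with $d_{S'}$ maps $\mathrm{Hom}_A(Q_S,Q_{S'})$ onto $\mathrm{Hom}_A(Q_S,P(S'))$, so every chain map is null-homotopic. The same computation, now using $P(S')\in\mathrm{add}(P(I'))$ and the approximation property of $d_S$, gives $\mathrm{Hom}_{K^b}(P(S'),T_S[1])=0$. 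Hence $\mathrm{Hom}_{K^b}(T,T[1])=0$, completing the orthogonality check.

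Granting both conditions, $T$ is a tilting complex by definition, and $\mathrm{End}_{K^b(\mathrm{proj}\,A)}(T)$ is then derived equivalent to $A$ by Rickard's theorem. The one genuinely delicate point is the explicit homotopy bookkeeping in the computations of $\mathrm{Hom}_{K^b}(T_S,T_{S'}[1])$ and $\mathrm{Hom}_{K^b}(P(S'),T_S[1])$ — fixing the degree and sign conventions for $T_S$ and pinpointing exactly which composite the approximation property of $d_S$ and $d_{S'}$ annihilates — but this is a short calculation, and everything else (generation, the support reductions, the symmetric-algebra self-duality) is formal. Alternatively, one may simply cite the result, as it is the classical theorem of Okuyama (see also Rickard, and Linckelmann's exposition).
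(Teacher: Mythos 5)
Your verification is correct, but note that there is no in-paper proof to compare it with: the paper records Theorem \ref{Okuyama} as Okuyama's classical construction and supports it only by the citation to his unpublished preprint (its content is only \emph{used} later, in the proof of Theorem \ref{Kauer move}), so your closing remark that one may simply cite the result is exactly what the paper does. Your direct argument is sound and self-contained: generation follows from the triangle relating $Q_S$, $P(S)$ and $T_S$ together with $Q_S\in\mathrm{add}(P(I'))$; the vanishing of $\mathrm{Hom}_{K^b(\mathrm{proj}\,A)}(T,T[n])$ for $|n|\geq 2$ is a pure degree-support statement; the reduction of $n=-1$ to $n=1$ via $\mathrm{Hom}(U,V)\cong D\,\mathrm{Hom}(V,U)$ for symmetric $A$ is legitimate, and it is in fact doing real work, since the one piece that is not formal, namely $\mathrm{Hom}_{K^b}(T_S,P(S')[-1])$, is computed by maps $f\colon P(S)\to P(S')$ with $f\circ d_S=0$ and is not obviously zero by a bare homotopy count; and your $n=1$ computations use precisely the right-approximation property (every map from an object of $\mathrm{add}(P(I'))$ to $P(S)$ factors through $d_S$), applied with sources $Q_S$ and $P(S')$. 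Two cosmetic points: you use the symbol $S'$ both for elements of $I'$ and for the index of a second summand $T_{S'}$ of $X$, which should be disambiguated; and which of the cross terms $\mathrm{Hom}(X,Y[1])$ and $\mathrm{Hom}(Y,X[1])$ dies ``on support grounds'' versus via the approximation argument depends on whether $Q$ or $P(S)$ is placed in degree zero, so the degree convention should be fixed explicitly before the homotopy bookkeeping --- under the convention you implicitly adopt, your assignment of the four pieces is consistent and the proof goes through.
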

	
	The Okuyama tilting complexes for Brauer graph algebras are first constructed in \cite{Kauer}. Let $A$ be a Brauer graph algebra associated with a ribbon graph $\Gamma$. For $I=\{S_0\}$ where $S_0$ is a simple $A$-module, set $T=T_{S_0}\oplus(\bigoplus_{S\neq S_0}P(S))$ with $S$ simple. Then Kauer shows that the Brauer graph algebra $B=\mathrm{End}_{\mathscr{D}^b(A)}(T)$ has Brauer graph $\Gamma'=(\Gamma\backslash s)\cup s'$ where $s$ is the edge in $\Gamma$ corresponding to $S_0$ and where $s'$ is obtained by one of the local moves in Figure \ref{fig:Kauer-moves} on the corresponding ribbon graph $\Gamma$ (the orientations around all vertices are clockwise).
	
	\begin{figure}[ht]
		\centering
		\begin{center}       
			\begin{tikzpicture}[x=0.75pt,y=0.75pt,yscale=-1,xscale=1]
				
				\draw  [fill={rgb, 255:red, 0; green, 0; blue, 0 }  ,fill opacity=1 ] (50,55) .. controls (50,52.24) and (52.24,50) .. (55,50) .. controls (57.76,50) and (60,52.24) .. (60,55) .. controls (60,57.76) and (57.76,60) .. (55,60) .. controls (52.24,60) and (50,57.76) .. (50,55) -- cycle ;
				\draw  [fill={rgb, 255:red, 0; green, 0; blue, 0 }  ,fill opacity=1 ] (100,55) .. controls (100,52.24) and (102.24,50) .. (105,50) .. controls (107.76,50) and (110,52.24) .. (110,55) .. controls (110,57.76) and (107.76,60) .. (105,60) .. controls (102.24,60) and (100,57.76) .. (100,55) -- cycle ;
				\draw [line width=1.5]    (55,55) -- (75,90) -- (105,55) ;
				\draw  [fill={rgb, 255:red, 0; green, 0; blue, 0 }  ,fill opacity=1 ] (70,90) .. controls (70,87.24) and (72.24,85) .. (75,85) .. controls (77.76,85) and (80,87.24) .. (80,90) .. controls (80,92.76) and (77.76,95) .. (75,95) .. controls (72.24,95) and (70,92.76) .. (70,90) -- cycle ;
				\draw  [fill={rgb, 255:red, 0; green, 0; blue, 0 }  ,fill opacity=1 ] (150,55) .. controls (150,52.24) and (152.24,50) .. (155,50) .. controls (157.76,50) and (160,52.24) .. (160,55) .. controls (160,57.76) and (157.76,60) .. (155,60) .. controls (152.24,60) and (150,57.76) .. (150,55) -- cycle ;
				\draw  [fill={rgb, 255:red, 0; green, 0; blue, 0 }  ,fill opacity=1 ] (200,55) .. controls (200,52.24) and (202.24,50) .. (205,50) .. controls (207.76,50) and (210,52.24) .. (210,55) .. controls (210,57.76) and (207.76,60) .. (205,60) .. controls (202.24,60) and (200,57.76) .. (200,55) -- cycle ;
				\draw [line width=1.5]    (155,55) -- (175,90) -- (205,55) ;
				\draw  [fill={rgb, 255:red, 0; green, 0; blue, 0 }  ,fill opacity=1 ] (170,90) .. controls (170,87.24) and (172.24,85) .. (175,85) .. controls (177.76,85) and (180,87.24) .. (180,90) .. controls (180,92.76) and (177.76,95) .. (175,95) .. controls (172.24,95) and (170,92.76) .. (170,90) -- cycle ;
				\draw [line width=1.5]    (75,90) -- (175,90) ;
				\draw [line width=1.5]    (175,90) -- (175,155.5) ;
				\draw  [fill={rgb, 255:red, 0; green, 0; blue, 0 }  ,fill opacity=1 ] (300.25,190.08) .. controls (300.26,192.84) and (298.04,195.09) .. (295.28,195.11) .. controls (292.51,195.12) and (290.26,192.89) .. (290.25,190.13) .. controls (290.23,187.37) and (292.46,185.12) .. (295.22,185.11) .. controls (297.98,185.09) and (300.23,187.32) .. (300.25,190.08) -- cycle ;
				\draw  [fill={rgb, 255:red, 0; green, 0; blue, 0 }  ,fill opacity=1 ] (250.25,190.34) .. controls (250.26,193.1) and (248.04,195.35) .. (245.28,195.37) .. controls (242.51,195.38) and (240.26,193.16) .. (240.25,190.39) .. controls (240.24,187.63) and (242.46,185.38) .. (245.22,185.37) .. controls (247.98,185.35) and (250.24,187.58) .. (250.25,190.34) -- cycle ;
				\draw [line width=1.5]    (295.25,190.11) -- (275.07,155.21) -- (245.25,190.37) ;
				\draw  [fill={rgb, 255:red, 0; green, 0; blue, 0 }  ,fill opacity=1 ] (280.07,155.19) .. controls (280.08,157.95) and (277.85,160.2) .. (275.09,160.21) .. controls (272.33,160.23) and (270.08,158) .. (270.07,155.24) .. controls (270.05,152.48) and (272.28,150.23) .. (275.04,150.21) .. controls (277.8,150.2) and (280.05,152.42) .. (280.07,155.19) -- cycle ;
				\draw  [fill={rgb, 255:red, 0; green, 0; blue, 0 }  ,fill opacity=1 ] (200.25,190.6) .. controls (200.26,193.37) and (198.04,195.62) .. (195.28,195.63) .. controls (192.52,195.64) and (190.26,193.42) .. (190.25,190.66) .. controls (190.24,187.9) and (192.46,185.64) .. (195.22,185.63) .. controls (197.99,185.62) and (200.24,187.84) .. (200.25,190.6) -- cycle ;
				\draw  [fill={rgb, 255:red, 0; green, 0; blue, 0 }  ,fill opacity=1 ] (150.25,190.87) .. controls (150.27,193.63) and (148.04,195.88) .. (145.28,195.89) .. controls (142.52,195.91) and (140.27,193.68) .. (140.25,190.92) .. controls (140.24,188.16) and (142.46,185.91) .. (145.22,185.89) .. controls (147.99,185.88) and (150.24,188.1) .. (150.25,190.87) -- cycle ;
				\draw [line width=1.5]    (195.25,190.63) -- (175.07,155.74) -- (145.25,190.89) ;
				\draw  [fill={rgb, 255:red, 0; green, 0; blue, 0 }  ,fill opacity=1 ] (180.07,155.71) .. controls (180.08,158.47) and (177.85,160.72) .. (175.09,160.74) .. controls (172.33,160.75) and (170.08,158.52) .. (170.07,155.76) .. controls (170.05,153) and (172.28,150.75) .. (175.04,150.74) .. controls (177.8,150.72) and (180.05,152.95) .. (180.07,155.71) -- cycle ;
				\draw [line width=1.5]    (275.07,155.21) -- (175.07,155.74) ;
				\draw  [fill={rgb, 255:red, 0; green, 0; blue, 0 }  ,fill opacity=1 ] (418,55) .. controls (418,52.24) and (420.24,50) .. (423,50) .. controls (425.76,50) and (428,52.24) .. (428,55) .. controls (428,57.76) and (425.76,60) .. (423,60) .. controls (420.24,60) and (418,57.76) .. (418,55) -- cycle ;
				\draw  [fill={rgb, 255:red, 0; green, 0; blue, 0 }  ,fill opacity=1 ] (468,55) .. controls (468,52.24) and (470.24,50) .. (473,50) .. controls (475.76,50) and (478,52.24) .. (478,55) .. controls (478,57.76) and (475.76,60) .. (473,60) .. controls (470.24,60) and (468,57.76) .. (468,55) -- cycle ;
				\draw [line width=1.5]    (423,55) -- (443,90) -- (473,55) ;
				\draw  [fill={rgb, 255:red, 0; green, 0; blue, 0 }  ,fill opacity=1 ] (438,90) .. controls (438,87.24) and (440.24,85) .. (443,85) .. controls (445.76,85) and (448,87.24) .. (448,90) .. controls (448,92.76) and (445.76,95) .. (443,95) .. controls (440.24,95) and (438,92.76) .. (438,90) -- cycle ;
				\draw  [fill={rgb, 255:red, 0; green, 0; blue, 0 }  ,fill opacity=1 ] (518,55) .. controls (518,52.24) and (520.24,50) .. (523,50) .. controls (525.76,50) and (528,52.24) .. (528,55) .. controls (528,57.76) and (525.76,60) .. (523,60) .. controls (520.24,60) and (518,57.76) .. (518,55) -- cycle ;
				\draw  [fill={rgb, 255:red, 0; green, 0; blue, 0 }  ,fill opacity=1 ] (568,55) .. controls (568,52.24) and (570.24,50) .. (573,50) .. controls (575.76,50) and (578,52.24) .. (578,55) .. controls (578,57.76) and (575.76,60) .. (573,60) .. controls (570.24,60) and (568,57.76) .. (568,55) -- cycle ;
				\draw [line width=1.5]    (523,55) -- (543,90) -- (573,55) ;
				\draw  [fill={rgb, 255:red, 0; green, 0; blue, 0 }  ,fill opacity=1 ] (538,90) .. controls (538,87.24) and (540.24,85) .. (543,85) .. controls (545.76,85) and (548,87.24) .. (548,90) .. controls (548,92.76) and (545.76,95) .. (543,95) .. controls (540.24,95) and (538,92.76) .. (538,90) -- cycle ;
				\draw [line width=1.5]    (443,90) -- (543,90) ;
				\draw  [fill={rgb, 255:red, 0; green, 0; blue, 0 }  ,fill opacity=1 ] (668.25,190.08) .. controls (668.26,192.84) and (666.04,195.09) .. (663.28,195.11) .. controls (660.51,195.12) and (658.26,192.89) .. (658.25,190.13) .. controls (658.23,187.37) and (660.46,185.12) .. (663.22,185.11) .. controls (665.98,185.09) and (668.23,187.32) .. (668.25,190.08) -- cycle ;
				\draw  [fill={rgb, 255:red, 0; green, 0; blue, 0 }  ,fill opacity=1 ] (618.25,190.34) .. controls (618.26,193.1) and (616.04,195.35) .. (613.28,195.37) .. controls (610.51,195.38) and (608.26,193.16) .. (608.25,190.39) .. controls (608.24,187.63) and (610.46,185.38) .. (613.22,185.37) .. controls (615.98,185.35) and (618.24,187.58) .. (618.25,190.34) -- cycle ;
				\draw [line width=1.5]    (663.25,190.11) -- (643.07,155.21) -- (613.25,190.37) ;
				\draw  [fill={rgb, 255:red, 0; green, 0; blue, 0 }  ,fill opacity=1 ] (648.07,155.19) .. controls (648.08,157.95) and (645.85,160.2) .. (643.09,160.21) .. controls (640.33,160.23) and (638.08,158) .. (638.07,155.24) .. controls (638.05,152.48) and (640.28,150.23) .. (643.04,150.21) .. controls (645.8,150.2) and (648.05,152.42) .. (648.07,155.19) -- cycle ;
				\draw  [fill={rgb, 255:red, 0; green, 0; blue, 0 }  ,fill opacity=1 ] (568.25,190.6) .. controls (568.26,193.37) and (566.04,195.62) .. (563.28,195.63) .. controls (560.52,195.64) and (558.26,193.42) .. (558.25,190.66) .. controls (558.24,187.9) and (560.46,185.64) .. (563.22,185.63) .. controls (565.99,185.62) and (568.24,187.84) .. (568.25,190.6) -- cycle ;
				\draw  [fill={rgb, 255:red, 0; green, 0; blue, 0 }  ,fill opacity=1 ] (518.25,190.87) .. controls (518.27,193.63) and (516.04,195.88) .. (513.28,195.89) .. controls (510.52,195.91) and (508.27,193.68) .. (508.25,190.92) .. controls (508.24,188.16) and (510.46,185.91) .. (513.22,185.89) .. controls (515.99,185.88) and (518.24,188.1) .. (518.25,190.87) -- cycle ;
				\draw [line width=1.5]    (563.25,190.63) -- (543.07,155.74) -- (513.25,190.89) ;
				\draw  [fill={rgb, 255:red, 0; green, 0; blue, 0 }  ,fill opacity=1 ] (548.07,155.71) .. controls (548.08,158.47) and (545.85,160.72) .. (543.09,160.74) .. controls (540.33,160.75) and (538.08,158.52) .. (538.07,155.76) .. controls (538.05,153) and (540.28,150.75) .. (543.04,150.74) .. controls (545.8,150.72) and (548.05,152.95) .. (548.07,155.71) -- cycle ;
				\draw [line width=1.5]    (643.07,155.21) -- (543.07,155.74) ;
				\draw [line width=1.5]    (643.04,150.21) .. controls (626,112.5) and (474,130.5) .. (443,90) ;
				
				\draw (178,115) node [anchor=north west][inner sep=0.75pt]   [align=left] {$s$};
				\draw (335,110) node [anchor=north west][inner sep=0.75pt] [font=\Huge]  [align=left] {$\cong$};
				\draw (574,102) node [anchor=north west][inner sep=0.75pt]   [align=left] {\large$s'$};
				\draw (71,50) node [anchor=north west][inner sep=0.75pt]   [align=left] {$\cdots$};
				\draw (173,50) node [anchor=north west][inner sep=0.75pt]   [align=left] {$\cdots$};
				\draw (166,188) node [anchor=north west][inner sep=0.75pt]   [align=left] {$\cdots$};
				\draw (270,188) node [anchor=north west][inner sep=0.75pt]   [align=left] {$\cdots$};
				\draw (442,50) node [anchor=north west][inner sep=0.75pt]   [align=left] {$\cdots$};
				\draw (543,50) node [anchor=north west][inner sep=0.75pt]   [align=left] {$\cdots$};
				\draw (533,189) node [anchor=north west][inner sep=0.75pt]   [align=left] {$\cdots$};
				\draw (636,185) node [anchor=north west][inner sep=0.75pt]   [align=left] {$\cdots$};

			\end{tikzpicture}

			\smallskip

			\begin{tikzpicture}[x=0.75pt,y=0.75pt,yscale=-1,xscale=1]
				
				\draw  [fill={rgb, 255:red, 0; green, 0; blue, 0 }  ,fill opacity=1 ] (170,90) .. controls (170,87.24) and (172.24,85) .. (175,85) .. controls (177.76,85) and (180,87.24) .. (180,90) .. controls (180,92.76) and (177.76,95) .. (175,95) .. controls (172.24,95) and (170,92.76) .. (170,90) -- cycle ;
				\draw [line width=1.5]    (175,90) -- (175,155.5) ;
				\draw  [fill={rgb, 255:red, 0; green, 0; blue, 0 }  ,fill opacity=1 ] (300.25,190.08) .. controls (300.26,192.84) and (298.04,195.09) .. (295.28,195.11) .. controls (292.51,195.12) and (290.26,192.89) .. (290.25,190.13) .. controls (290.23,187.37) and (292.46,185.12) .. (295.22,185.11) .. controls (297.98,185.09) and (300.23,187.32) .. (300.25,190.08) -- cycle ;
				\draw  [fill={rgb, 255:red, 0; green, 0; blue, 0 }  ,fill opacity=1 ] (250.25,190.34) .. controls (250.26,193.1) and (248.04,195.35) .. (245.28,195.37) .. controls (242.51,195.38) and (240.26,193.16) .. (240.25,190.39) .. controls (240.24,187.63) and (242.46,185.38) .. (245.22,185.37) .. controls (247.98,185.35) and (250.24,187.58) .. (250.25,190.34) -- cycle ;
				\draw [line width=1.5]    (295.25,190.11) -- (275.07,155.21) -- (245.25,190.37) ;
				\draw  [fill={rgb, 255:red, 0; green, 0; blue, 0 }  ,fill opacity=1 ] (280.07,155.19) .. controls (280.08,157.95) and (277.85,160.2) .. (275.09,160.21) .. controls (272.33,160.23) and (270.08,158) .. (270.07,155.24) .. controls (270.05,152.48) and (272.28,150.23) .. (275.04,150.21) .. controls (277.8,150.2) and (280.05,152.42) .. (280.07,155.19) -- cycle ;
				\draw  [fill={rgb, 255:red, 0; green, 0; blue, 0 }  ,fill opacity=1 ] (200.25,190.6) .. controls (200.26,193.37) and (198.04,195.62) .. (195.28,195.63) .. controls (192.52,195.64) and (190.26,193.42) .. (190.25,190.66) .. controls (190.24,187.9) and (192.46,185.64) .. (195.22,185.63) .. controls (197.99,185.62) and (200.24,187.84) .. (200.25,190.6) -- cycle ;
				\draw  [fill={rgb, 255:red, 0; green, 0; blue, 0 }  ,fill opacity=1 ] (150.25,190.87) .. controls (150.27,193.63) and (148.04,195.88) .. (145.28,195.89) .. controls (142.52,195.91) and (140.27,193.68) .. (140.25,190.92) .. controls (140.24,188.16) and (142.46,185.91) .. (145.22,185.89) .. controls (147.99,185.88) and (150.24,188.1) .. (150.25,190.87) -- cycle ;
				\draw [line width=1.5]    (195.25,190.63) -- (175.07,155.74) -- (145.25,190.89) ;
				\draw  [fill={rgb, 255:red, 0; green, 0; blue, 0 }  ,fill opacity=1 ] (180.07,155.71) .. controls (180.08,158.47) and (177.85,160.72) .. (175.09,160.74) .. controls (172.33,160.75) and (170.08,158.52) .. (170.07,155.76) .. controls (170.05,153) and (172.28,150.75) .. (175.04,150.74) .. controls (177.8,150.72) and (180.05,152.95) .. (180.07,155.71) -- cycle ;
				\draw [line width=1.5]    (275.07,155.21) -- (175.07,155.74) ;
				\draw  [fill={rgb, 255:red, 0; green, 0; blue, 0 }  ,fill opacity=1 ] (594.25,194.08) .. controls (594.26,196.84) and (592.04,199.09) .. (589.28,199.11) .. controls (586.51,199.12) and (584.26,196.89) .. (584.25,194.13) .. controls (584.23,191.37) and (586.46,189.12) .. (589.22,189.11) .. controls (591.98,189.09) and (594.23,191.32) .. (594.25,194.08) -- cycle ;
				\draw  [fill={rgb, 255:red, 0; green, 0; blue, 0 }  ,fill opacity=1 ] (544.25,194.34) .. controls (544.26,197.1) and (542.04,199.35) .. (539.28,199.37) .. controls (536.51,199.38) and (534.26,197.16) .. (534.25,194.39) .. controls (534.24,191.63) and (536.46,189.38) .. (539.22,189.37) .. controls (541.98,189.35) and (544.24,191.58) .. (544.25,194.34) -- cycle ;
				\draw [line width=1.5]    (589.25,194.11) -- (569.07,159.21) -- (539.25,194.37) ;
				\draw  [fill={rgb, 255:red, 0; green, 0; blue, 0 }  ,fill opacity=1 ] (574.07,159.19) .. controls (574.08,161.95) and (571.85,164.2) .. (569.09,164.21) .. controls (566.33,164.23) and (564.08,162) .. (564.07,159.24) .. controls (564.05,156.48) and (566.28,154.23) .. (569.04,154.21) .. controls (571.8,154.2) and (574.05,156.42) .. (574.07,159.19) -- cycle ;
				\draw  [fill={rgb, 255:red, 0; green, 0; blue, 0 }  ,fill opacity=1 ] (494.25,194.6) .. controls (494.26,197.37) and (492.04,199.62) .. (489.28,199.63) .. controls (486.52,199.64) and (484.26,197.42) .. (484.25,194.66) .. controls (484.24,191.9) and (486.46,189.64) .. (489.22,189.63) .. controls (491.99,189.62) and (494.24,191.84) .. (494.25,194.6) -- cycle ;
				\draw  [fill={rgb, 255:red, 0; green, 0; blue, 0 }  ,fill opacity=1 ] (444.25,194.87) .. controls (444.27,197.63) and (442.04,199.88) .. (439.28,199.89) .. controls (436.52,199.91) and (434.27,197.68) .. (434.25,194.92) .. controls (434.24,192.16) and (436.46,189.91) .. (439.22,189.89) .. controls (441.99,189.88) and (444.24,192.1) .. (444.25,194.87) -- cycle ;
				\draw [line width=1.5]    (489.25,194.63) -- (469.07,159.74) -- (439.25,194.89) ;
				\draw  [fill={rgb, 255:red, 0; green, 0; blue, 0 }  ,fill opacity=1 ] (474.07,159.71) .. controls (474.08,162.47) and (471.85,164.72) .. (469.09,164.74) .. controls (466.33,164.75) and (464.08,162.52) .. (464.07,159.76) .. controls (464.05,157) and (466.28,154.75) .. (469.04,154.74) .. controls (471.8,154.72) and (474.05,156.95) .. (474.07,159.71) -- cycle ;
				\draw [line width=1.5]    (569.07,159.21) -- (469.07,159.74) ;
				\draw  [fill={rgb, 255:red, 0; green, 0; blue, 0 }  ,fill opacity=1 ] (564,90) .. controls (564,87.24) and (566.24,85) .. (569,85) .. controls (571.76,85) and (574,87.24) .. (574,90) .. controls (574,92.76) and (571.76,95) .. (569,95) .. controls (566.24,95) and (564,92.76) .. (564,90) -- cycle ;
				\draw [line width=1.5]    (569,90) -- (569,155.5) ;
				
				\draw (178,115) node [anchor=north west][inner sep=0.75pt]   [align=left] {$s$};
				\draw (342,130) node [anchor=north west][inner sep=0.75pt]  [font=\Huge] [align=left] {$\cong$};
				\draw (166,188) node [anchor=north west][inner sep=0.75pt]   [align=left] {$\cdots$};
				\draw (270,188) node [anchor=north west][inner sep=0.75pt]   [align=left] {$\cdots$};
				\draw (459,193) node [anchor=north west][inner sep=0.75pt]   [align=left] {$\cdots$};
				\draw (562,189) node [anchor=north west][inner sep=0.75pt]   [align=left] {$\cdots$};
				\draw (572,115) node [anchor=north west][inner sep=0.75pt]   [align=left] {$s'$};

			\end{tikzpicture}

			\tikzset{every picture/.style={line width=0.75pt}} 
			
			\begin{tikzpicture}[x=0.75pt,y=0.75pt,yscale=-1,xscale=1]
				
				\draw  [fill={rgb, 255:red, 0; green, 0; blue, 0 }  ,fill opacity=1 ] (300.25,190.08) .. controls (300.26,192.84) and (298.04,195.09) .. (295.28,195.11) .. controls (292.51,195.12) and (290.26,192.89) .. (290.25,190.13) .. controls (290.23,187.37) and (292.46,185.12) .. (295.22,185.11) .. controls (297.98,185.09) and (300.23,187.32) .. (300.25,190.08) -- cycle ;
				\draw  [fill={rgb, 255:red, 0; green, 0; blue, 0 }  ,fill opacity=1 ] (250.25,190.34) .. controls (250.26,193.1) and (248.04,195.35) .. (245.28,195.37) .. controls (242.51,195.38) and (240.26,193.16) .. (240.25,190.39) .. controls (240.24,187.63) and (242.46,185.38) .. (245.22,185.37) .. controls (247.98,185.35) and (250.24,187.58) .. (250.25,190.34) -- cycle ;
				\draw [line width=1.5]    (295.25,190.11) -- (275.07,155.21) -- (245.25,190.37) ;
				\draw  [fill={rgb, 255:red, 0; green, 0; blue, 0 }  ,fill opacity=1 ] (280.07,155.19) .. controls (280.08,157.95) and (277.85,160.2) .. (275.09,160.21) .. controls (272.33,160.23) and (270.08,158) .. (270.07,155.24) .. controls (270.05,152.48) and (272.28,150.23) .. (275.04,150.21) .. controls (277.8,150.2) and (280.05,152.42) .. (280.07,155.19) -- cycle ;
				\draw  [fill={rgb, 255:red, 0; green, 0; blue, 0 }  ,fill opacity=1 ] (200.25,190.6) .. controls (200.26,193.37) and (198.04,195.62) .. (195.28,195.63) .. controls (192.52,195.64) and (190.26,193.42) .. (190.25,190.66) .. controls (190.24,187.9) and (192.46,185.64) .. (195.22,185.63) .. controls (197.99,185.62) and (200.24,187.84) .. (200.25,190.6) -- cycle ;
				\draw  [fill={rgb, 255:red, 0; green, 0; blue, 0 }  ,fill opacity=1 ] (150.25,190.87) .. controls (150.27,193.63) and (148.04,195.88) .. (145.28,195.89) .. controls (142.52,195.91) and (140.27,193.68) .. (140.25,190.92) .. controls (140.24,188.16) and (142.46,185.91) .. (145.22,185.89) .. controls (147.99,185.88) and (150.24,188.1) .. (150.25,190.87) -- cycle ;
				\draw [line width=1.5]    (195.25,190.63) -- (175.07,155.74) -- (145.25,190.89) ;
				\draw  [fill={rgb, 255:red, 0; green, 0; blue, 0 }  ,fill opacity=1 ] (180.07,155.71) .. controls (180.08,158.47) and (177.85,160.72) .. (175.09,160.74) .. controls (172.33,160.75) and (170.08,158.52) .. (170.07,155.76) .. controls (170.05,153) and (172.28,150.75) .. (175.04,150.74) .. controls (177.8,150.72) and (180.05,152.95) .. (180.07,155.71) -- cycle ;
				\draw [line width=1.5]    (275.07,155.21) -- (175.07,155.74) ;
				\draw  [fill={rgb, 255:red, 0; green, 0; blue, 0 }  ,fill opacity=1 ] (594.25,194.08) .. controls (594.26,196.84) and (592.04,199.09) .. (589.28,199.11) .. controls (586.51,199.12) and (584.26,196.89) .. (584.25,194.13) .. controls (584.23,191.37) and (586.46,189.12) .. (589.22,189.11) .. controls (591.98,189.09) and (594.23,191.32) .. (594.25,194.08) -- cycle ;
				\draw  [fill={rgb, 255:red, 0; green, 0; blue, 0 }  ,fill opacity=1 ] (544.25,194.34) .. controls (544.26,197.1) and (542.04,199.35) .. (539.28,199.37) .. controls (536.51,199.38) and (534.26,197.16) .. (534.25,194.39) .. controls (534.24,191.63) and (536.46,189.38) .. (539.22,189.37) .. controls (541.98,189.35) and (544.24,191.58) .. (544.25,194.34) -- cycle ;
				\draw [line width=1.5]    (589.25,194.11) -- (569.07,159.21) -- (539.25,194.37) ;
				\draw  [fill={rgb, 255:red, 0; green, 0; blue, 0 }  ,fill opacity=1 ] (574.07,159.19) .. controls (574.08,161.95) and (571.85,164.2) .. (569.09,164.21) .. controls (566.33,164.23) and (564.08,162) .. (564.07,159.24) .. controls (564.05,156.48) and (566.28,154.23) .. (569.04,154.21) .. controls (571.8,154.2) and (574.05,156.42) .. (574.07,159.19) -- cycle ;
				\draw  [fill={rgb, 255:red, 0; green, 0; blue, 0 }  ,fill opacity=1 ] (494.25,194.6) .. controls (494.26,197.37) and (492.04,199.62) .. (489.28,199.63) .. controls (486.52,199.64) and (484.26,197.42) .. (484.25,194.66) .. controls (484.24,191.9) and (486.46,189.64) .. (489.22,189.63) .. controls (491.99,189.62) and (494.24,191.84) .. (494.25,194.6) -- cycle ;
				\draw  [fill={rgb, 255:red, 0; green, 0; blue, 0 }  ,fill opacity=1 ] (444.25,194.87) .. controls (444.27,197.63) and (442.04,199.88) .. (439.28,199.89) .. controls (436.52,199.91) and (434.27,197.68) .. (434.25,194.92) .. controls (434.24,192.16) and (436.46,189.91) .. (439.22,189.89) .. controls (441.99,189.88) and (444.24,192.1) .. (444.25,194.87) -- cycle ;
				\draw [line width=1.5]    (489.25,194.63) -- (469.07,159.74) -- (439.25,194.89) ;
				\draw  [fill={rgb, 255:red, 0; green, 0; blue, 0 }  ,fill opacity=1 ] (474.07,159.71) .. controls (474.08,162.47) and (471.85,164.72) .. (469.09,164.74) .. controls (466.33,164.75) and (464.08,162.52) .. (464.07,159.76) .. controls (464.05,157) and (466.28,154.75) .. (469.04,154.74) .. controls (471.8,154.72) and (474.05,156.95) .. (474.07,159.71) -- cycle ;
				\draw [line width=1.5]    (569.07,159.21) -- (469.07,159.74) ;
				\draw [line width=1.5]    (175.07,155.74) .. controls (271,78.5) and (72,78.5) .. (175.07,155.74) ;
				\draw [line width=1.5]    (569.07,159.21) .. controls (665,81.98) and (466,81.98) .. (569.07,159.21) ;
				
				\draw (342,130) node [anchor=north west][inner sep=0.75pt]  [font=\Huge] [align=left] {$\cong$};
				\draw (166,188) node [anchor=north west][inner sep=0.75pt]   [align=left] {$\cdots$};
				\draw (270,188) node [anchor=north west][inner sep=0.75pt]   [align=left] {$\cdots$};
				\draw (459,193) node [anchor=north west][inner sep=0.75pt]   [align=left] {$\cdots$};
				\draw (562,189) node [anchor=north west][inner sep=0.75pt]   [align=left] {$\cdots$};
				\draw (167,85) node [anchor=north west][inner sep=0.75pt]   [align=left] {$s$};
				\draw (562,85) node [anchor=north west][inner sep=0.75pt]   [align=left] {$s'$};

			\end{tikzpicture}    
			\smallskip	
		\end{center}
					\caption{Kauer moves on ribbon graphs.}
		\label{fig:Kauer-moves}	
	\end{figure}	
	We call such a local move in Figure \ref{fig:Kauer-moves} a {\it Kauer move} at $s$. Indeed, Kauer moves give a explicit description of the tilting mutations of Brauer graph algebras. Now we generalized these local moves into sf Brauer graph.
	\begin{definition}
		Let $s$ and $s'$ be an edge which is not labeled in two labeled ribbon graphs $\Gamma_L$ and $\Gamma_L'$, respectively. If the edges whom the moves have passed through in local moves in Figure \ref{fig:Kauer-moves} are not labeled, then these moves induced moves from $\Gamma_L$ to $\Gamma_L'$. We also call these moves by {\it Kauer moves} on the labeled ribbon graph $\Gamma_L$.
	\end{definition}
	
	We prove the following theorem. 
		
	\begin{theorem}\label{Kauer move}
		Let $A$ and $B$ be two sf Brauer graph algebras associated with sf Brauer graphs $(\Gamma^A_L,\m^A)$ and $(\Gamma^B_{L'},\m^B)$, respectively. Assume that the vertex sets of $\Gamma^A$ and $\Gamma^B$ coincide, and that $\m^A=\m^B$. If $\Gamma^A_L$ can be obtained from $\Gamma^B_{L'}$ via a finite sequence of Kauer moves, then $A$ and $B$ are derived equivalent.
	\end{theorem}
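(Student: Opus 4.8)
The plan is to reduce to the case of a single Kauer move and then invoke the Okuyama--Rickard construction (Theorem \ref{Okuyama}). Since derived equivalence is transitive, if $\Gamma^A_L$ is obtained from $\Gamma^B_{L'}$ by a finite sequence of Kauer moves it suffices to treat one move at a time; so I would assume $\Gamma^A_L$ arises from $\Gamma^B_{L'}$ by a single Kauer move at an (unlabeled) edge $s$ of $\Gamma^B$, with the convention --- built into the definition preceding the theorem --- that $s$, the new edge $s'$, and every edge through which the move passes are unlabeled. By Proposition \ref{sf-BGA is ssqb} both $A$ and $B$ are symmetric, so the hypotheses of Theorem \ref{Okuyama} are in force.

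Next I would take $S$ to be the simple $B$-module attached to the edge $s$, set $I=\{S\}$ and let $I'$ be the remaining isomorphism classes of simple $B$-modules, and form the two-term complex $T_S\colon Q\to P(S)$ whose nonzero differential is a right $\mathrm{add}(P(I'))$-approximation, as in Theorem \ref{Okuyama}. Then $T=T_S\oplus\bigl(\bigoplus_{S'\in I'}P(S')\bigr)$ is a tilting complex over $B$, so by the standard Morita theory for derived categories $B$ is derived equivalent to $C:=\mathrm{End}_{\mathscr{D}^b(B)}(T)$, and the whole problem is reduced to establishing the algebra isomorphism $C\cong A$.

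To identify $C$ I would compute the Hom-spaces and their composition among the indecomposable summands of $T$ inside $\mathscr{D}^b(B)$, using the explicit Loewy structure of the indecomposable projective $B$-modules read off from $(\Gamma^B_{L'},\m^B)$ exactly as in Example \ref{example sf-BGA}: each $P_{\bar h}$ is glued from the powers of the two special cycles ending at $\bar h$, the arrows coming from labeled edges contributing uniserial stretches. The decisive observation is that the computation is \emph{local}: $T_S$ differs from a stalk complex of projectives only at $s$ and at the edges adjacent to $s$, and near $s$ the sf Brauer graph looks like an ordinary Brauer graph, the labeled edges merely inserting uniserial segments that are carried along unchanged by the mutation. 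Concretely, I would first use the idempotent $e$ of Proposition \ref{prop:eAe} to strip away all vertices belonging to labeled edges, reducing the core of the calculation to Kauer's original computation \cite{Kauer} for the Brauer graph algebra $eBe$ associated with $(\Gamma^B\backslash L',\m^B)$; reinstating the labeled edges then changes only the lengths of the paths running ``through'' the identified arrows, and a direct check should show that the resulting quiver and relations are precisely $Q_{\Gamma^A,L}$ and $I_{\Gamma^A,L}$, where $\Gamma^A_L$ is the graph produced by the corresponding local move of Figure \ref{fig:Kauer-moves}.

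The hard part will be this last identification. One must verify, through a case analysis mirroring the three types of Kauer move in Figure \ref{fig:Kauer-moves} (and treating separately the degenerate configurations where $s$ is a loop or meets a truncated vertex), that every relation of $C$ is one of the relations of type (i), (ii), (iii) defining $I_{\Gamma^A,L}$ and conversely, and that the arrows of $C$ match the arrows of $Q_{\Gamma^A,L}$. The extra bookkeeping absent from the classical case --- tracking which arrows of $Q_{\Gamma^A,L}$ are single arrows coming from labeled edges, and how powers of special cycles behave under the projection $\pi\colon kQ_{\Gamma^A}\to kQ_{\Gamma^A,L}$ --- is exactly what makes the argument more delicate than Kauer's, and I expect it to be the main source of technical difficulty.
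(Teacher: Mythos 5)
Your overall skeleton coincides with the paper's: reduce by transitivity to a single Kauer move, apply Theorem \ref{Okuyama} with $I=\{S\}$ for the simple at the edge $s$ being moved, and identify the endomorphism algebra of the resulting two-term tilting complex with the sf Brauer graph algebra of the mutated graph. The problem is that this last identification --- which is the actual content of the theorem --- is only asserted, and the mechanism you propose for it does not work as stated. Passing to the idempotent subalgebra $eBe$ of Proposition \ref{prop:eAe} and quoting Kauer's computation \cite{Kauer} for the underlying Brauer graph algebra $(\Gamma\backslash L,\m)$ does not determine $\mathrm{End}_{\mathscr{D}^b(B)}(T)$: the tilting complex $T$ contains the projective summands attached to the labeled edges themselves, and the Hom-spaces into and out of these summands, as well as compositions factoring through them, are invisible in $eBe$. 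Moreover, ``reinstating the labeled edges only changes path lengths'' is false at the level of the dimension count: the paper's proof explicitly isolates a configuration with no analogue in the classical case, namely two edges $e_{i_1},e_{i_2}$ of the same special cycle joined only by arrows starting at labeled edges, where $\dim_k(e_{i_2}Be_{i_1})=\m(w_1)+\m(w_2)-1$ rather than the value one would read off from the truncated Brauer graph, and one must check that $\dim_k\ho_{\mathscr{K}^b(A)}(T_{i_2},T_{i_1})$ matches this.

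The paper closes this gap by a direct argument you would still need to supply: it matches the quiver of $\mathrm{End}_{\mathscr{K}^b(A)}(T)^{op}$ with that of the target algebra (describing explicitly which chain maps are null-homotopic and which give the new arrows around the moved vertices), checks that the defining relations of the target algebra are satisfied, thereby producing a surjection from the target algebra onto the endomorphism algebra, and then proves this surjection is an isomorphism by a case-by-case computation showing $\dim_k(e_iBe_j)=\dim_k\ho_{\mathscr{K}^b(A)}(T_i,T_j)$ for all pairs of summands, including the labeled-edge subtlety above. Without carrying out this comparison (or an equivalent one), your proposal establishes only that $B$ is derived equivalent to \emph{some} algebra $C$, not that $C$ is the sf Brauer graph algebra of the mutated labeled ribbon graph, so the proof is incomplete at precisely the step you flag as hard.
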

	
	\begin{proof}
		We just need to verify local moves in Figure \ref{fig:Kauer-moves} induces two derived equivalent algebras. By Theorem \ref{Okuyama}, we can construct the tilting complex from left to right with the partition of simple modules $I=\{S_0\}$ where $S_0$ is the simple module corresponding to $s$ in $\Gamma$. The verification is same in \cite[Lemma 3.5]{Kauer}. We give the explicit computation of the first type precisely.
		
		Denote the sf Brauer graph algebra which is induced by the left labeled ribbon graph (resp.\ the right one) in the first local move in Figure \ref{fig:Kauer-moves} by $A$ (resp.\ $B$). To streamline the notation, we set $n_i=\mathrm{val}(v_i)$. Give the following notations on the sf Brauer graph. To be more specific, $P_s,P_1,P_2,Q_2,\cdots, Q_{n_2}, R_2,\cdots,R_{n_1-1}$ are indecomposable projective $A$-module corresponding to the edges in the sf Brauer graph.
		
		\begin{figure}
			\centering
			
			\tikzset{every picture/.style={line width=0.75pt}} 
			
			\begin{tikzpicture}[x=0.75pt,y=0.75pt,yscale=-1,xscale=1]
				
				\draw  [fill={rgb, 255:red, 0; green, 0; blue, 0 }  ,fill opacity=1 ] (223,94.5) .. controls (223,91.74) and (225.24,89.5) .. (228,89.5) .. controls (230.76,89.5) and (233,91.74) .. (233,94.5) .. controls (233,97.26) and (230.76,99.5) .. (228,99.5) .. controls (225.24,99.5) and (223,97.26) .. (223,94.5) -- cycle ;
				\draw [line width=1.5]    (228,94.5) -- (396,96.5) ;
				\draw [line width=1.5]    (183,47.5) -- (228,94.5) ;
				\draw [line width=1.5]    (228,94.5) -- (261,45.5) ;
				\draw [line width=1.5]    (353,47.5) -- (396,96.5) ;
				\draw [line width=1.5]    (446,48.5) -- (396,96.5) ;
				\draw [line width=1.5]    (396,96.5) -- (395.34,239.49) ;
				\draw [line width=1.5]    (563.36,239.46) -- (395.34,239.49) ;
				\draw [line width=1.5]    (599.76,273.03) -- (563.36,239.46) ;
				\draw [line width=1.5]    (563.36,239.46) -- (531.75,272.85) ;
				\draw [line width=1.5]    (430.75,273.06) -- (395.34,239.49) ;
				\draw [line width=1.5]    (355.73,271.96) -- (395.34,239.49) ;
				\draw   (198.62,93.29) .. controls (198.46,75.89) and (212.59,61.66) .. (230.18,61.5) .. controls (247.77,61.34) and (262.17,75.32) .. (262.33,92.71) .. controls (262.49,110.11) and (248.37,124.34) .. (230.77,124.5) .. controls (213.18,124.66) and (198.79,110.68) .. (198.62,93.29) -- cycle ;
				\draw   (189,84.12) .. controls (196.11,81.82) and (201.24,78.61) .. (204.37,74.48) .. controls (203.23,79.52) and (204.09,85.46) .. (206.93,92.3) ;
				\draw   (265.43,74.52) .. controls (262.87,80.1) and (261.87,85.21) .. (262.43,89.87) .. controls (260.31,85.67) and (256.63,81.95) .. (251.39,78.68) ;
				\draw   (363.56,95.28) .. controls (363.39,77.34) and (378.35,62.67) .. (396.97,62.5) .. controls (415.59,62.34) and (430.83,76.74) .. (431,94.68) .. controls (431.17,112.62) and (416.21,127.3) .. (397.59,127.46) .. controls (378.97,127.63) and (363.73,113.22) .. (363.56,95.28) -- cycle ;
				\draw   (358,111.52) .. controls (361.31,106.06) and (362.93,100.92) .. (362.85,96.08) .. controls (364.62,100.61) and (368.08,104.82) .. (373.24,108.73) ;
				\draw   (412.59,133.5) .. controls (407.38,129.65) and (402.29,127.46) .. (397.3,126.93) .. controls (402.18,125.81) and (406.95,123.03) .. (411.62,118.59) ;
				\draw   (358.39,83.72) .. controls (364.18,80.75) and (368.37,77.19) .. (370.93,73.04) .. controls (369.97,77.78) and (370.61,83.12) .. (372.87,89.06) ;
				\draw  [fill={rgb, 255:red, 0; green, 0; blue, 0 }  ,fill opacity=1 ] (391,96.5) .. controls (391,93.74) and (393.24,91.5) .. (396,91.5) .. controls (398.76,91.5) and (401,93.74) .. (401,96.5) .. controls (401,99.26) and (398.76,101.5) .. (396,101.5) .. controls (393.24,101.5) and (391,99.26) .. (391,96.5) -- cycle ;
				\draw  [fill={rgb, 255:red, 0; green, 0; blue, 0 }  ,fill opacity=1 ] (425.75,273.06) .. controls (425.75,270.3) and (427.99,268.06) .. (430.75,268.06) .. controls (433.51,268.06) and (435.75,270.3) .. (435.75,273.06) .. controls (435.75,275.82) and (433.51,278.06) .. (430.75,278.06) .. controls (427.99,278.06) and (425.75,275.82) .. (425.75,273.06) -- cycle ;
				\draw  [fill={rgb, 255:red, 0; green, 0; blue, 0 }  ,fill opacity=1 ] (558.36,239.46) .. controls (558.36,236.7) and (560.59,234.46) .. (563.36,234.46) .. controls (566.12,234.46) and (568.36,236.7) .. (568.36,239.46) .. controls (568.36,242.22) and (566.12,244.46) .. (563.36,244.46) .. controls (560.59,244.46) and (558.36,242.22) .. (558.36,239.46) -- cycle ;
				\draw  [fill={rgb, 255:red, 0; green, 0; blue, 0 }  ,fill opacity=1 ] (390.34,239.49) .. controls (390.34,236.72) and (392.58,234.49) .. (395.34,234.49) .. controls (398.1,234.49) and (400.34,236.72) .. (400.34,239.49) .. controls (400.34,242.25) and (398.1,244.49) .. (395.34,244.49) .. controls (392.58,244.49) and (390.34,242.25) .. (390.34,239.49) -- cycle ;
				\draw  [fill={rgb, 255:red, 0; green, 0; blue, 0 }  ,fill opacity=1 ] (350.73,271.96) .. controls (350.73,269.2) and (352.97,266.96) .. (355.73,266.96) .. controls (358.49,266.96) and (360.73,269.2) .. (360.73,271.96) .. controls (360.73,274.73) and (358.49,276.96) .. (355.73,276.96) .. controls (352.97,276.96) and (350.73,274.73) .. (350.73,271.96) -- cycle ;
				\draw  [fill={rgb, 255:red, 0; green, 0; blue, 0 }  ,fill opacity=1 ] (178,47.5) .. controls (178,44.74) and (180.24,42.5) .. (183,42.5) .. controls (185.76,42.5) and (188,44.74) .. (188,47.5) .. controls (188,50.26) and (185.76,52.5) .. (183,52.5) .. controls (180.24,52.5) and (178,50.26) .. (178,47.5) -- cycle ;
				\draw  [fill={rgb, 255:red, 0; green, 0; blue, 0 }  ,fill opacity=1 ] (256,45.5) .. controls (256,42.74) and (258.24,40.5) .. (261,40.5) .. controls (263.76,40.5) and (266,42.74) .. (266,45.5) .. controls (266,48.26) and (263.76,50.5) .. (261,50.5) .. controls (258.24,50.5) and (256,48.26) .. (256,45.5) -- cycle ;
				\draw  [fill={rgb, 255:red, 0; green, 0; blue, 0 }  ,fill opacity=1 ] (348,47.5) .. controls (348,44.74) and (350.24,42.5) .. (353,42.5) .. controls (355.76,42.5) and (358,44.74) .. (358,47.5) .. controls (358,50.26) and (355.76,52.5) .. (353,52.5) .. controls (350.24,52.5) and (348,50.26) .. (348,47.5) -- cycle ;
				\draw  [fill={rgb, 255:red, 0; green, 0; blue, 0 }  ,fill opacity=1 ] (441,48.5) .. controls (441,45.74) and (443.24,43.5) .. (446,43.5) .. controls (448.76,43.5) and (451,45.74) .. (451,48.5) .. controls (451,51.26) and (448.76,53.5) .. (446,53.5) .. controls (443.24,53.5) and (441,51.26) .. (441,48.5) -- cycle ;
				\draw  [fill={rgb, 255:red, 0; green, 0; blue, 0 }  ,fill opacity=1 ] (526.75,272.85) .. controls (526.75,270.08) and (528.99,267.85) .. (531.75,267.85) .. controls (534.52,267.85) and (536.75,270.08) .. (536.75,272.85) .. controls (536.75,275.61) and (534.52,277.85) .. (531.75,277.85) .. controls (528.99,277.85) and (526.75,275.61) .. (526.75,272.85) -- cycle ;
				\draw  [fill={rgb, 255:red, 0; green, 0; blue, 0 }  ,fill opacity=1 ] (594.76,273.03) .. controls (594.76,270.26) and (597,268.03) .. (599.76,268.03) .. controls (602.52,268.03) and (604.76,270.26) .. (604.76,273.03) .. controls (604.76,275.79) and (602.52,278.03) .. (599.76,278.03) .. controls (597,278.03) and (594.76,275.79) .. (594.76,273.03) -- cycle ;
				\draw    (398,205.5) .. controls (419.45,207.45) and (425.69,215.1) .. (429.7,236.8) ;
				\draw [shift={(430,238.5)}, rotate = 260.13] [color={rgb, 255:red, 0; green, 0; blue, 0 }  ][line width=0.75]    (10.93,-3.29) .. controls (6.95,-1.4) and (3.31,-0.3) .. (0,0) .. controls (3.31,0.3) and (6.95,1.4) .. (10.93,3.29)   ;
				
				\draw (210,24) node [anchor=north west][inner sep=0.75pt]  [font=\large] [align=left] {$\cdots$};
				\draw (375,174) node [anchor=north west][inner sep=0.75pt]  [font=\large] [align=left] {$P_s$};
				\draw (166,21) node [anchor=north west][inner sep=0.75pt]  [font=\large] [align=left] {$Q_2$};
				\draw (259,21) node [anchor=north west][inner sep=0.75pt]  [font=\large] [align=left] {$Q_{n_2}$};
				\draw (300,99) node [anchor=north west][inner sep=0.75pt]  [font=\large] [align=left] {$P_1$};
				\draw (473,214) node [anchor=north west][inner sep=0.75pt]  [font=\large] [align=left] {$P_2$};
				\draw (220,70) node [anchor=north west][inner sep=0.75pt]  [font=\normalsize] [align=left] {$v_2$};
				\draw (390,70) node [anchor=north west][inner sep=0.75pt]  [font=\normalsize] [align=left] {$v_1$};
				\draw (387,254) node [anchor=north west][inner sep=0.75pt]  [font=\normalsize] [align=left] {$v_3$};
				\draw (558,253) node [anchor=north west][inner sep=0.75pt]  [font=\normalsize] [align=left] {$v_4$};
				\draw (390,24) node [anchor=north west][inner sep=0.75pt]  [font=\large] [align=left] {$\cdots$};
				\draw (343,21) node [anchor=north west][inner sep=0.75pt]  [font=\large] [align=left] {$R_2$};
				\draw (436,21) node [anchor=north west][inner sep=0.75pt]  [font=\large] [align=left] {$R_{n_1-1}$};
				\draw (385,270) node [anchor=north west][inner sep=0.75pt]  [font=\large] [align=left] {$\cdots$};
				\draw (557,270) node [anchor=north west][inner sep=0.75pt]  [font=\large] [align=left] {$\cdots$};
				\draw (221,126) node [anchor=north west][inner sep=0.75pt]  [font=\large] [align=left] {$\beta_1$};
				\draw (270,63) node [anchor=north west][inner sep=0.75pt]  [font=\large] [align=left] {$\beta_{n_2}$};
				\draw (343,67) node [anchor=north west][inner sep=0.75pt]  [font=\large] [align=left] {$\alpha_{2}$};
				\draw (360,123) node [anchor=north west][inner sep=0.75pt]  [font=\large] [align=left] {$\alpha_{1}$};
				\draw (438,93) node [anchor=north west][inner sep=0.75pt]  [font=\large] [align=left] {$\alpha_{n_1}$};
				\draw (421,192) node [anchor=north west][inner sep=0.75pt]  [font=\large] [align=left] {$\gamma_1$};

			\end{tikzpicture}
			\caption{The tilting complex for the first Kauer move.}
			\label{fig:first-tilting-complex}	
		\end{figure}

		Then consider the complex $T_s=(P_1\oplus P_2\rightarrow P_s)$ with nonzero terms in degrees zero and one, and the differential is given by $(\alpha_{1},\gamma_1)$ (see Figure \ref{fig:first-tilting-complex}). Denote by $T=T_s\oplus(\bigoplus_{i=1}^{n}P_i)$ where $\{P_1,\cdots, P_{n}, P_s\}$ is a complete set of representatives of the isomorphism classes of projective $A$-modules.  By Theorem \ref{Okuyama}, $T$ is a tilting complex for $A$. In fact, certain modifications occur in the quiver $Q'$ of $B':=(\mathrm{End}_{\mathscr{K}^b(A)}(T))^{op}$. As an illustration, we describe explicit changes to the arrows around the vertices $v_1$ and $v_2$ in the ribbon graph.
		\begin{itemize}
			\item There is no arrow from $R_{n_1-1}$ to $T_s$, since the chain map $((\alpha_{n_1},0),0):\; T_s\rightarrow R_{n_1-1}$ is null-homotopic in $\mathscr{K}^b(A)$. At the same time, the chain map $\alpha_{1}\alpha_{n_1}:\;P_1\rightarrow R_{n_1-1}$ gives an arrow from $R_{n_1-1}$ to $P_1$, since it cannot be factor through $\mathrm{add}(T/(P_1\oplus R_{n_1-1}))$.
			
			\item There is an arrow form $P_1$ to $T_s$ which is given by $((\id,0),0):\;T_s\rightarrow P_1$, and there is an arrow from $T_s$ to $Q_2$ which is given by $((\beta_1,0)^t,0):\;Q_2\rightarrow T_s$.
		\end{itemize}
		The same modifications also change the arrows around the vertices $v_3$ and $v_4$. Therefore, the quiver of $B$ and the quiver $Q'$ of $B'$ are the same one. Indeed, the special cycle (under cyclic permutation) of $v_2$ is given by the following maps.
\[\begin{tikzcd}
	{P_1} & {Q_{n_2}} & \cdots & {Q_2} & {P_1\oplus P_2} & {P_1} \\
	&&&& {P_s}
	\arrow["{\beta_{n_2}}", from=1-1, to=1-2]
	\arrow["{\beta_{n_2-1}}", from=1-2, to=1-3]
	\arrow["{\beta_2}", from=1-3, to=1-4]
	\arrow["{(\beta_1,0)^t}", from=1-4, to=1-5]
	\arrow["{(\id,0)}", from=1-5, to=1-6]
	\arrow["{(\alpha_1,\gamma_1)}", from=1-5, to=2-5]
\end{tikzcd}\]
		Note that $((\id,0)(\beta_1,0)^t\beta_2\cdots\beta_{n_2})^{m(v_2)}(\id,0)$ is null-homotopic in $\mathscr{K}^b(A)$. Moreover, we can verify the relations in $B$ are also zero in $B'$. Thus there is an epimorphism form $B$ to $B'$.
		
		We now show $\dim_k(B)=\dim_k(B')$. Denote by $e_0, e_1,\cdots, e_n$ the idempotent elements in $B$ corresponding to edges in sf Brauer graph with $e_0$ corresponding to $s'$. Moreover, the set of vertices in $Q'$ and $\{T_0, T_1,\cdots, T_{n}\}$ with $T_0=T_s$ and $T_i=P_i$, $i=1,\cdots n$ have an one-to-one correspondence. We just need to show $\dim_k(e_i B e_j)=\dim_k(\ho_{\mathscr{K}^b(A)}(T_i,T_j))$. We only give a verification with $e_i=e_0$, $T_i=T_0$. Other calculations are almost same as these cases.

		\textit{Case 1. } If $e_j=e_0$, $T_j=T_0$. Then $\dim_k(e_0Be_0)=2+(m(v_2)-1)+(m(v_4)-1)=m(v_2)+m(v_4)$. Moreover, we have 
					$$
			\begin{array}{*{3}{lll}}
				\dim_k(\ho_{\mathscr{K}^b(A)}(T_0,T_0)) & =&\dim_k(\mathrm{End}_A(P_1\oplus P_2))+\dim_k(\mathrm{End}_A(P_s))\\&&-\dim_k(\ho_A(P_1\oplus P_2,P_s))-\dim_k(\ho_A(P_s,P_1\oplus P_2))\\
				& =& (m(v_1)+m(v_2))+(m(v_3)+m(v_4))+(m(v_1)+m(v_3))-2m(v_1)-2m(v_3)\\
				&=&m(v_2)+m(v_4).
			\end{array}
			$$
		
		\textit{Case 2. } If $e_j$ (resp. $T_j$) is corresponding an edge which is not involved in a same special cycle with $e_0$ (resp. $T_0$), then we have  $\dim_k(e_0 B e_j)=\dim_k(\ho_{\mathscr{K}^b(A)}(T_0,T_j))=0$.
		
		\textit{Case 3. } If $e_j$ (resp. $T_j$) is corresponding an edge which is in a same special cycle with $e_0$ (resp. $T_0$), without loss of generality, we can assume this special cycle is the cycle around the vertex $v_2$. Then $\dim_k(e_0Be_j)=m(v_2)$. Moreover, we have 
		$$
		\begin{array}{*{3}{lll}}
			\dim_k(\ho_{\mathscr{K}^b(A)}(T_0,T_j)) & =&\dim_k(\ho_A(P_1\oplus P_2,P_j))-\dim_k(\ho_A(P_s,P_j))\\
			& =& m(v_2).
		\end{array}
		$$
		
		Therefore, $\dim_k(e_0 B e_j)=\dim_k(\ho_{\mathscr{K}^b(A)}(T_0,T_j))$. 
		
		We also need to point out that there is something different from the proof in Brauer graph case. For example, consider two edges $e_{i_1}$ and $e_{i_2}$ in a same special cycle around the vertex $w_1$ in the sf Brauer graph algebra. Moreover, we assume all arrows which compose a subpath of this cycle from $e_{i_1}$ to $e_{i_2}$ are starting from labeled edges. In this case, if $e_{i_1}$ connect with vertices $w_1$ and $w_2$ in the sf Brauer graph, so does $e_{i_2}$. Then $\dim_k(e_{i_2}Be_{i_1})=m(w_1)+m(w_2)-1$. Moreover, consider the composition series of $P_{i_1}$, we have	$$P_{i_1}=\begin{array}{*{3}{lll}}
			&i_1&\\&\vdots&\\&i_2&\\\vdots&&\vdots\\&j_0&\\&\vdots&\\&i_1&
		\end{array}$$ where $j_0$ is corresponding to the labeled edge $\bar h$ such that $\overline{\rho^{i_1-j_0}(h)}$ corresponds to the idempotent $e_{i_1}$. Therefore, 
		$\dim_k(\ho_{\mathscr{K}^b(A)}(T_{i_2},T_{i_1}))=\dim_k(\ho_A(P_{i_2},P_{i_1}))=m(w_1)+m(w_2)-1.$
		
		In conclusion, we have $\dim_k(B)=\dim_k(B')$. Since there is an epimorphism form $B$ to $B'$, we finally show that $B\cong B'=(\mathrm{End}_{\mathscr{K}^b(A)}(T))^{op}$
	\end{proof}
		
	Finally, we end this section with some examples. 
	
	\begin{example}
		Consider the following sf Brauer graphs with multiplicity $1$ and all orientations are clockwise.
		
		\begin{center}   
		
		\begin{tikzpicture}[x=0.75pt,y=0.75pt,yscale=-1,xscale=1]
			
			\draw  [fill={rgb, 255:red, 0; green, 0; blue, 0 }  ,fill opacity=1 ] (96,109) .. controls (96,106.24) and (98.24,104) .. (101,104) .. controls (103.76,104) and (106,106.24) .. (106,109) .. controls (106,111.76) and (103.76,114) .. (101,114) .. controls (98.24,114) and (96,111.76) .. (96,109) -- cycle ;
			\draw [line width=1.5]    (101,109) -- (155,108.45) -- (204,107.96) -- (254,67.5) -- (204,67.5) ;
			\draw [line width=1.5]    (204,107.96) -- (255,107.5) ;
			\draw  [fill={rgb, 255:red, 0; green, 0; blue, 0 }  ,fill opacity=1 ] (150,108.45) .. controls (150,105.69) and (152.24,103.45) .. (155,103.45) .. controls (157.76,103.45) and (160,105.69) .. (160,108.45) .. controls (160,111.21) and (157.76,113.45) .. (155,113.45) .. controls (152.24,113.45) and (150,111.21) .. (150,108.45) -- cycle ;
			\draw  [fill={rgb, 255:red, 0; green, 0; blue, 0 }  ,fill opacity=1 ] (199,107.96) .. controls (199,105.19) and (201.24,102.96) .. (204,102.96) .. controls (206.77,102.96) and (209,105.19) .. (209,107.96) .. controls (209,110.72) and (206.77,112.96) .. (204,112.96) .. controls (201.24,112.96) and (199,110.72) .. (199,107.96) -- cycle ;
			\draw  [fill={rgb, 255:red, 0; green, 0; blue, 0 }  ,fill opacity=1 ] (199,67.5) .. controls (199,64.74) and (201.24,62.5) .. (204,62.5) .. controls (206.76,62.5) and (209,64.74) .. (209,67.5) .. controls (209,70.26) and (206.76,72.5) .. (204,72.5) .. controls (201.24,72.5) and (199,70.26) .. (199,67.5) -- cycle ;
			\draw  [fill={rgb, 255:red, 0; green, 0; blue, 0 }  ,fill opacity=1 ] (250,107.5) .. controls (250,104.74) and (252.24,102.5) .. (255,102.5) .. controls (257.76,102.5) and (260,104.74) .. (260,107.5) .. controls (260,110.26) and (257.76,112.5) .. (255,112.5) .. controls (252.24,112.5) and (250,110.26) .. (250,107.5) -- cycle ;
			\draw  [fill={rgb, 255:red, 0; green, 0; blue, 0 }  ,fill opacity=1 ] (249,67.5) .. controls (249,64.74) and (251.24,62.5) .. (254,62.5) .. controls (256.76,62.5) and (259,64.74) .. (259,67.5) .. controls (259,70.26) and (256.76,72.5) .. (254,72.5) .. controls (251.24,72.5) and (249,70.26) .. (249,67.5) -- cycle ;
			\draw  [fill={rgb, 255:red, 0; green, 0; blue, 0 }  ,fill opacity=1 ] (423,105) .. controls (423,102.24) and (425.24,100) .. (428,100) .. controls (430.76,100) and (433,102.24) .. (433,105) .. controls (433,107.76) and (430.76,110) .. (428,110) .. controls (425.24,110) and (423,107.76) .. (423,105) -- cycle ;
			\draw [line width=1.5]    (428,105) -- (482,104.45) -- (531,103.96) -- (586,103.5) -- (534,63.5) -- (587,62.5) ;
			\draw  [fill={rgb, 255:red, 0; green, 0; blue, 0 }  ,fill opacity=1 ] (477,104.45) .. controls (477,101.69) and (479.24,99.45) .. (482,99.45) .. controls (484.76,99.45) and (487,101.69) .. (487,104.45) .. controls (487,107.21) and (484.76,109.45) .. (482,109.45) .. controls (479.24,109.45) and (477,107.21) .. (477,104.45) -- cycle ;
			\draw  [fill={rgb, 255:red, 0; green, 0; blue, 0 }  ,fill opacity=1 ] (526,103.96) .. controls (526,101.19) and (528.24,98.96) .. (531,98.96) .. controls (533.77,98.96) and (536,101.19) .. (536,103.96) .. controls (536,106.72) and (533.77,108.96) .. (531,108.96) .. controls (528.24,108.96) and (526,106.72) .. (526,103.96) -- cycle ;
			\draw  [fill={rgb, 255:red, 0; green, 0; blue, 0 }  ,fill opacity=1 ] (529,63.5) .. controls (529,60.74) and (531.24,58.5) .. (534,58.5) .. controls (536.76,58.5) and (539,60.74) .. (539,63.5) .. controls (539,66.26) and (536.76,68.5) .. (534,68.5) .. controls (531.24,68.5) and (529,66.26) .. (529,63.5) -- cycle ;
			\draw  [fill={rgb, 255:red, 0; green, 0; blue, 0 }  ,fill opacity=1 ] (582,62.5) .. controls (582,59.74) and (584.24,57.5) .. (587,57.5) .. controls (589.76,57.5) and (592,59.74) .. (592,62.5) .. controls (592,65.26) and (589.76,67.5) .. (587,67.5) .. controls (584.24,67.5) and (582,65.26) .. (582,62.5) -- cycle ;
			\draw  [fill={rgb, 255:red, 0; green, 0; blue, 0 }  ,fill opacity=1 ] (581,103.5) .. controls (581,100.74) and (583.24,98.5) .. (586,98.5) .. controls (588.76,98.5) and (591,100.74) .. (591,103.5) .. controls (591,106.26) and (588.76,108.5) .. (586,108.5) .. controls (583.24,108.5) and (581,106.26) .. (581,103.5) -- cycle ;
			\draw [line width=1.5]  [dash pattern={on 5.63pt off 4.5pt}]  (155,108.45) .. controls (195,78.45) and (164,137.96) .. (204,107.96) ;
			\draw [line width=1.5]  [dash pattern={on 4.5pt off 4.5pt}]  (482,104.45) .. controls (522,74.45) and (491,133.96) .. (531,103.96) ;
			
			\draw (44,85) node [anchor=north west][inner sep=0.75pt]  [font=\large] [align=left] {$\Gamma^1_L:$};
			\draw (360,85) node [anchor=north west][inner sep=0.75pt]  [font=\large] [align=left] {$\Gamma^2_L:$};

		\end{tikzpicture}

\tikzset{every picture/.style={line width=0.75pt}} 

\begin{tikzpicture}[x=0.75pt,y=0.75pt,yscale=-1,xscale=1]
	
	\draw  [fill={rgb, 255:red, 0; green, 0; blue, 0 }  ,fill opacity=1 ] (127,121) .. controls (127,118.24) and (129.24,116) .. (132,116) .. controls (134.76,116) and (137,118.24) .. (137,121) .. controls (137,123.76) and (134.76,126) .. (132,126) .. controls (129.24,126) and (127,123.76) .. (127,121) -- cycle ;
	\draw  [fill={rgb, 255:red, 0; green, 0; blue, 0 }  ,fill opacity=1 ] (277,121) .. controls (277,118.24) and (279.24,116) .. (282,116) .. controls (284.76,116) and (287,118.24) .. (287,121) .. controls (287,123.76) and (284.76,126) .. (282,126) .. controls (279.24,126) and (277,123.76) .. (277,121) -- cycle ;
	\draw [line width=1.5]  [dash pattern={on 5.63pt off 4.5pt}]  (132,121) .. controls (94,68.5) and (156,65.5) .. (282,121) ;
	\draw [line width=1.5]    (132,121) .. controls (250,67.5) and (357,69.5) .. (282,121) ;
	\draw [line width=1.5]    (132,121) -- (212,121) ;
	\draw  [fill={rgb, 255:red, 0; green, 0; blue, 0 }  ,fill opacity=1 ] (207,121) .. controls (207,118.24) and (209.24,116) .. (212,116) .. controls (214.76,116) and (217,118.24) .. (217,121) .. controls (217,123.76) and (214.76,126) .. (212,126) .. controls (209.24,126) and (207,123.76) .. (207,121) -- cycle ;
	\draw [line width=1.5]    (132,121) .. controls (172,153.5) and (243,154.5) .. (282,121) ;
	\draw  [fill={rgb, 255:red, 0; green, 0; blue, 0 }  ,fill opacity=1 ] (458,121.01) .. controls (458,118.25) and (460.24,116.01) .. (463,116.01) .. controls (465.76,116.01) and (468,118.25) .. (468,121.01) .. controls (468,123.77) and (465.76,126.01) .. (463,126.01) .. controls (460.24,126.01) and (458,123.77) .. (458,121.01) -- cycle ;
	\draw  [fill={rgb, 255:red, 0; green, 0; blue, 0 }  ,fill opacity=1 ] (608,121.01) .. controls (608,118.25) and (610.24,116.01) .. (613,116.01) .. controls (615.76,116.01) and (618,118.25) .. (618,121.01) .. controls (618,123.77) and (615.76,126.01) .. (613,126.01) .. controls (610.24,126.01) and (608,123.77) .. (608,121.01) -- cycle ;
	\draw [line width=1.5]  [dash pattern={on 5.63pt off 4.5pt}]  (463,121.01) .. controls (425,68.51) and (487,65.51) .. (613,121.01) ;
	\draw [line width=1.5]    (463,121.01) .. controls (581,67.51) and (688,69.51) .. (613,121.01) ;
	\draw [line width=1.5]    (533,121.01) -- (613,121.01) ;
	\draw  [fill={rgb, 255:red, 0; green, 0; blue, 0 }  ,fill opacity=1 ] (528,121.01) .. controls (528,118.25) and (530.24,116.01) .. (533,116.01) .. controls (535.76,116.01) and (538,118.25) .. (538,121.01) .. controls (538,123.77) and (535.76,126.01) .. (533,126.01) .. controls (530.24,126.01) and (528,123.77) .. (528,121.01) -- cycle ;
	\draw [line width=1.5]    (463,121.01) .. controls (503,153.51) and (574,154.51) .. (613,121.01) ;
	
	\draw (56,102) node [anchor=north west][inner sep=0.75pt]  [font=\large] [align=left] {$\Gamma^3_L:$};
	\draw (399,102) node [anchor=north west][inner sep=0.75pt]  [font=\large] [align=left] {$\Gamma^4_L:$};

\end{tikzpicture}

\tikzset{every picture/.style={line width=0.75pt}} 

\begin{tikzpicture}[x=0.75pt,y=0.75pt,yscale=-1,xscale=1]
	
	\draw  [fill={rgb, 255:red, 0; green, 0; blue, 0 }  ,fill opacity=1 ] (127,121) .. controls (127,118.24) and (129.24,116) .. (132,116) .. controls (134.76,116) and (137,118.24) .. (137,121) .. controls (137,123.76) and (134.76,126) .. (132,126) .. controls (129.24,126) and (127,123.76) .. (127,121) -- cycle ;
	\draw  [fill={rgb, 255:red, 0; green, 0; blue, 0 }  ,fill opacity=1 ] (277,121) .. controls (277,118.24) and (279.24,116) .. (282,116) .. controls (284.76,116) and (287,118.24) .. (287,121) .. controls (287,123.76) and (284.76,126) .. (282,126) .. controls (279.24,126) and (277,123.76) .. (277,121) -- cycle ;
	\draw [line width=1.5]  [dash pattern={on 5.63pt off 4.5pt}]  (132,121) .. controls (94,68.5) and (156,65.5) .. (282,121) ;
	\draw [line width=1.5]  [dash pattern={on 5.63pt off 4.5pt}]  (132,121) .. controls (250,67.5) and (357,69.5) .. (282,121) ;
	\draw [line width=1.5]    (82,121) -- (132,121) ;
	\draw  [fill={rgb, 255:red, 0; green, 0; blue, 0 }  ,fill opacity=1 ] (77,121) .. controls (77,118.24) and (79.24,116) .. (82,116) .. controls (84.76,116) and (87,118.24) .. (87,121) .. controls (87,123.76) and (84.76,126) .. (82,126) .. controls (79.24,126) and (77,123.76) .. (77,121) -- cycle ;
	\draw [line width=1.5]    (132,121) .. controls (172,153.5) and (243,154.5) .. (282,121) ;
	\draw  [fill={rgb, 255:red, 0; green, 0; blue, 0 }  ,fill opacity=1 ] (437,121.01) .. controls (437,118.25) and (439.24,116.01) .. (442,116.01) .. controls (444.76,116.01) and (447,118.25) .. (447,121.01) .. controls (447,123.77) and (444.76,126.01) .. (442,126.01) .. controls (439.24,126.01) and (437,123.77) .. (437,121.01) -- cycle ;
	\draw  [fill={rgb, 255:red, 0; green, 0; blue, 0 }  ,fill opacity=1 ] (587,121.01) .. controls (587,118.25) and (589.24,116.01) .. (592,116.01) .. controls (594.76,116.01) and (597,118.25) .. (597,121.01) .. controls (597,123.77) and (594.76,126.01) .. (592,126.01) .. controls (589.24,126.01) and (587,123.77) .. (587,121.01) -- cycle ;
	\draw [line width=1.5]  [dash pattern={on 5.63pt off 4.5pt}]  (442,121.01) .. controls (404,68.51) and (466,65.51) .. (592,121.01) ;
	\draw [line width=1.5]    (442,121.01) .. controls (560,67.51) and (667,69.51) .. (592,121.01) ;
	\draw  [fill={rgb, 255:red, 0; green, 0; blue, 0 }  ,fill opacity=1 ] (537,121.01) .. controls (537,118.25) and (539.24,116.01) .. (542,116.01) .. controls (544.76,116.01) and (547,118.25) .. (547,121.01) .. controls (547,123.77) and (544.76,126.01) .. (542,126.01) .. controls (539.24,126.01) and (537,123.77) .. (537,121.01) -- cycle ;
	\draw  [fill={rgb, 255:red, 0; green, 0; blue, 0 }  ,fill opacity=1 ] (227,121) .. controls (227,118.24) and (229.24,116) .. (232,116) .. controls (234.76,116) and (237,118.24) .. (237,121) .. controls (237,123.76) and (234.76,126) .. (232,126) .. controls (229.24,126) and (227,123.76) .. (227,121) -- cycle ;
	\draw  [fill={rgb, 255:red, 0; green, 0; blue, 0 }  ,fill opacity=1 ] (387,121.01) .. controls (387,118.25) and (389.24,116.01) .. (392,116.01) .. controls (394.76,116.01) and (397,118.25) .. (397,121.01) .. controls (397,123.77) and (394.76,126.01) .. (392,126.01) .. controls (389.24,126.01) and (387,123.77) .. (387,121.01) -- cycle ;
	\draw  [fill={rgb, 255:red, 0; green, 0; blue, 0 }  ,fill opacity=1 ] (637,121.01) .. controls (637,118.25) and (639.24,116.01) .. (642,116.01) .. controls (644.76,116.01) and (647,118.25) .. (647,121.01) .. controls (647,123.77) and (644.76,126.01) .. (642,126.01) .. controls (639.24,126.01) and (637,123.77) .. (637,121.01) -- cycle ;
	\draw [line width=1.5]    (232,121) -- (282,121) ;
	\draw [line width=1.5]    (392,121.01) -- (442,121.01) ;
	\draw [line width=1.5]    (542,121.01) -- (592,121.01) ;
	\draw [line width=1.5]    (592,121.01) -- (642,121.01) ;
	
	\draw (41,92) node [anchor=north west][inner sep=0.75pt]  [font=\large] [align=left] {$\Gamma^5_L:$};
	\draw (350,91) node [anchor=north west][inner sep=0.75pt]  [font=\large] [align=left] {$\Gamma^6_L:$};
	\draw (98,108) node [anchor=north west][inner sep=0.75pt]   [align=left] {$s$};
\end{tikzpicture}	
		\end{center}
		Denote  the sf Brauer graph algebra associated with $\Gamma^i_L$ by $A_i$, $i=\{1,\cdots, 6\}$.
		By the Kauer move of the first type, we have the sf Brauer graph algebras $A_1$ and $A_2$ are derived equivalent. By the Kauer move of the second type, we have the sf Brauer graph algebras $A_3$ and $A_4$ are derived equivalent. 
		
		Moreover, not all derived equivalences between sf Brauer graph algebras can be obtained from Kauer moves. For example,  the sf Brauer graph algebras $A_5$ and $A_6$ are derived equivalent by choosing the Okuyama tilting complex with $I=\{S\}$, the simple module corresponding to the edge $s$ in $\Gamma^5_L$.
	\end{example}

	\section{Symmetric special quasi-biserial algebras}\label{sec:sym-sqb-4}
	
	We now begin to prove that all symmetric special quasi-biserial algebras are sf Brauer graph algebras. In this section, we assume that the algebras discussed are basic and indecomposable. Without loss of generality, we assume each sf Brauer graph algebra $A$ discussed in this section is indecomposable and contains a non-single arrow in its quiver, otherwise, $A$ is a Nakayama algebra which is contained in Brauer tree algebras.
	
	Firstly, we recall the concept of an algebra with arrow-free socle. Let $A=\qa$ be an indecomposable finite-dimensional algebra with $I$ an admissible ideal in the path algebra $kQ$.
	
	\begin{definition}\textnormal{(\cite[Definition 3.1]{GS1})}
		We say that the socle of $A$ is {\it arrow-free} if, for each $\alpha\in Q_1$, we have $\alpha\notin\soc(_AA)$ and $\alpha\notin\soc(A_A)$.
	\end{definition}
	
	\begin{proposition}\textnormal{(\cite[Proposition 3.2]{GS1})}\label{prop:self-socfree}
		Let $A$ be self-injective and $\rad^2(A)\neq 0$. Then the socle of $A$ is arrow-free.
	\end{proposition}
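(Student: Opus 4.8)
The plan is to argue by contradiction and to show that the presence of even a single arrow in the socle already forces $\rad^2(A)=0$. Suppose the socle of $A$ is not arrow-free; passing to $A^{\mathrm{op}}$ if necessary — which is again indecomposable and self-injective, admissibly presented, with $\rad^2\neq 0$ — I may assume there is an arrow $\alpha=e_j\alpha e_i\colon i\to j$ lying in $\soc({}_AA)$. Throughout I would use three elementary facts: (a) self-injectivity makes every indecomposable projective $Ae_v$ (and $e_vA$) injective, hence with simple socle and equal to the injective envelope of that socle, so every nonzero submodule of $Ae_v$ contains $\soc(Ae_v)$; (b) admissibility gives $I\subseteq\langle Q_1\rangle^2$, so an arrow never lies in $\rad^2(A)$, and the arrows out of a vertex $v$ form a basis of $\rad(Ae_v)/\rad^2(Ae_v)$; (c) $\soc({}_AA)$ is a two-sided ideal of $A$, being a left ideal as a submodule of ${}_AA$ and a right ideal because $\rad(A)(xa)=(\rad(A)x)a$.

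First I would localize at $i$. Since $\alpha$ has positive length and $\rad(A)\alpha=0$, we have $\alpha\in\rad(Ae_i)\cap\soc(Ae_i)$, so by (a) $\soc(Ae_i)=k\alpha\cong S_j$; and if $\rad^2(Ae_i)\neq 0$ it would contain this socle, placing the arrow $\alpha$ in $\rad^2(A)$ against (b). Hence $Ae_i$ is uniserial of length $2$, $\alpha$ is the unique arrow leaving $i$, and $Ae_i$ is the injective envelope of $S_j$.

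The central step is to transfer this rigidity to the neighbours of $i$. For any arrow $\gamma\colon\ell\to i$ one shows $\alpha\gamma=0$: if $\ell=i$ then $\alpha\gamma=\alpha^2\in\rad^2(Ae_i)=0$; if $\ell\neq i$ and $\alpha\gamma\neq 0$, then $\alpha\gamma\in\soc({}_AA)\cap Ae_\ell=\soc(Ae_\ell)$ spans that (simple) socle, forcing $Ae_\ell\cong I(S_j)\cong Ae_i$, which is impossible since $A$ is basic. As $\alpha$ is the only arrow out of $i$, $\alpha\gamma=0$ gives $\rad(A)\gamma=0$, so $\gamma\in\soc({}_AA)$; applying the previous step to $\gamma$, $Ae_\ell$ is uniserial of length $2$ and is the injective envelope of $S_i$. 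Since the $Ae_v$ are pairwise non-isomorphic, at most one arrow can enter $i$; and $i$ cannot be a source, for then $e_iA$ would be simple while — by self-injectivity — being isomorphic to the injective envelope of its socle, namely the $k$-dual of $Ae_i$, of dimension $2$. Thus $i$ has exactly one incoming arrow $\gamma\colon\ell\to i$, again in $\soc({}_AA)$.

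Finally I would propagate. Iterating the central step (with $\gamma$ in the role of $\alpha$, then with the unique predecessor of $\ell$, and so on) produces a backward chain of arrows, each the unique arrow out of its source and the unique arrow into its target, all of whose projectives are uniserial of length $2$. Since ``passing to the unique predecessor'' is an injective self-map of the finite set of vertices reached, this chain closes up into a cycle $v_0\to v_1\to\cdots\to v_r=v_0$ through $i$ (and hence through $j$), with each $Ae_{v_s}$ uniserial of length $2$ and — by the uniqueness of the arrows at each $v_s$ — with no arrow of $Q$ incident to this cycle other than its own. Connectedness of $Q$ then forces $Q$ to be exactly this cycle, so every path of length $2$ vanishes, i.e.\ $\rad^2(A)=0$, contradicting the hypothesis; an arrow in $\soc(A_A)$ is excluded by applying the same argument to $A^{\mathrm{op}}$. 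I expect the real work to lie in the central step: it is the one place where self-injectivity must be used twice — once, via injective envelopes and basicness, to force $\alpha\gamma=0$, and once, via a dimension count, to forbid $i$ from being a source — and it is where the ``unique incoming / unique outgoing arrow'' data that the final cycle argument consumes is produced.
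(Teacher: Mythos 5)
Your proof is correct. Note that the paper does not prove this statement at all: it is imported verbatim from \cite[Proposition 3.2]{GS1}, so there is no internal argument to compare with, and your write-up functions as a complete, self-contained proof in the paper's setting ($A=\qa$ indecomposable with $I$ admissible, hence basic with connected quiver). Your strategy is the expected shape of the argument: an arrow $\alpha\colon i\to j$ in $\soc({}_AA)$ forces $\soc(Ae_i)=k\alpha$, $\rad^2(Ae_i)=0$ and a unique arrow out of $i$; injective envelopes plus basicness then kill $\alpha\gamma$ for every arrow $\gamma$ ending at $i$, which puts $\gamma$ itself in the socle and pushes the same rigidity one vertex back; the unique-predecessor map on the finite vertex set closes the chain into an oriented cycle, which by connectedness is all of $Q$, so all indecomposable projectives are uniserial of length two and $\rad^2(A)=0$, contradicting the hypothesis (in effect you show $A$ would be a self-injective Nakayama algebra with radical square zero). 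Two points you state rather than argue, but which do hold and are worth a line each: the uniqueness of the arrow leaving $i$ follows because $\rad^2(Ae_i)=0$ makes $\rad(Ae_i)$ semisimple, hence contained in the simple socle $k\alpha$, so it is one-dimensional and spanned by $\alpha$; and in the case $\ell=i$ of your central step one should first observe that $\gamma$ must coincide with the loop $\alpha$ by that uniqueness before writing $\alpha\gamma=\alpha^2$. Your exclusion of a simple $e_iA$ is also phrased correctly: a simple injective right module would be the injective envelope of the right simple $S_i$, i.e. the $k$-dual of $Ae_i$, which has dimension two, so no appeal to the Nakayama permutation is needed.
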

	
	\begin{proposition}\textnormal{(\cite[Lemma 3.3]{GS1})}\label{soc-free}
		If the socle of $A$ is arrow-free then for each arrow $\alpha$ in $Q$, there are arrows $\beta$ and $\gamma$ such that $\alpha\beta\notin I$ and $\gamma\alpha\notin I$.
	\end{proposition}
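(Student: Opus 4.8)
The plan is to translate the hypothesis that the socle of $A$ is arrow-free into two non-vanishing statements about products in $A$, and then read off the required arrows $\beta,\gamma$ from positive-length paths that witness these non-vanishings. Concretely, I will use that $\soc({}_AA)=\{x\in A\mid \rad(A)\,x=0\}$ and $\soc(A_A)=\{x\in A\mid x\,\rad(A)=0\}$ (each being the largest semisimple submodule of the regular module on the respective side), so that for an arrow $\alpha$ the conditions $\alpha\notin\soc({}_AA)$ and $\alpha\notin\soc(A_A)$ read $\rad(A)\,\alpha\neq 0$ and $\alpha\,\rad(A)\neq 0$ in $A$.

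First I would record the standard fact that, since $I$ is admissible, $\rad(A)=\langle Q_1\rangle/I$ is spanned by the images of paths of length at least $1$ (because $\langle Q_1\rangle/I$ is nilpotent and $A/(\langle Q_1\rangle/I)\cong k^{|Q_0|}$ is semisimple). Now fix $\alpha\in Q_1$. From arrow-freeness, $\alpha\notin\soc(A_A)$, i.e.\ $\alpha\,\rad(A)\neq 0$, so there is a path $q=\epsilon_s\cdots\epsilon_1$ with $\ell(q)\geq 1$ and $\alpha q\notin I$. Writing $\alpha q=(\alpha\epsilon_s)(\epsilon_{s-1}\cdots\epsilon_1)$ and using that $I$ is a (two-sided) ideal, we get $\alpha\epsilon_s\notin I$; thus $\beta:=\epsilon_s$ does the job. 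Symmetrically, $\alpha\notin\soc({}_AA)$ gives $\rad(A)\,\alpha\neq 0$, hence a path $p=\delta_r\cdots\delta_1$ with $\ell(p)\geq 1$ and $p\alpha\notin I$; from $p\alpha=(\delta_r\cdots\delta_2)(\delta_1\alpha)$ and the ideal property, $\delta_1\alpha\notin I$, so $\gamma:=\delta_1$ works. This yields the two arrows demanded by the statement.

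There is no serious obstacle here: the argument is essentially bookkeeping. The only points that need care are (i) correctly matching the left socle condition with the relation $\gamma\alpha\notin I$ and the right socle condition with $\alpha\beta\notin I$ under the paper's right-to-left composition convention, and (ii) invoking admissibility of $I$ so that $\rad(A)$ really is spanned by positive-length paths (otherwise one could not extract an honest arrow from the witnessing product). Both are routine, so I expect the proof to be short.
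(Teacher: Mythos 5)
Your proof is correct. The paper does not reprove this statement (it is quoted from \cite[Lemma 3.3]{GS1}), and your argument --- translating arrow-freeness into $\rad(A)\alpha\neq 0$ and $\alpha\,\rad(A)\neq 0$, using admissibility of $I$ so that $\rad(A)$ is spanned by positive-length paths, and then extracting the arrow of a witnessing path adjacent to $\alpha$ via the two-sided ideal property of $I$ --- is essentially the standard argument given in that reference, with the composition conventions matched correctly.
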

	
	Now let $A=\qa$ be a symmetric special quasi-biserial algebra. Since $A$ is symmetric, $A$ is self-injective. if $\rad^2(A)=0$, then $A$ is simply a symmetric Nakayama algebra, which is just a Brauer tree algebra. Therefore, in following discussion, we assume $\rad^2(A)\neq 0$. Then by Proposition \ref{prop:self-socfree}, the socle of $A$ is arrow-free. 
	
	We call  a cycle $C=\alpha_n\cdots\alpha_1$ in $Q$ is {\it basic} if 
	
	\begin{itemize}
		\item $\alpha_{n-1}\cdots\alpha_{1},\;\alpha_1\alpha_n\cdots\alpha_{2}\notin I$;
		\item $C$ cannot be written as the form $C=C'^n$ with $C'$ another cycle in $Q$ and $n> 1$.
	\end{itemize}
	 We call a set $\{C_1,\cdots,C_r\}$ of basic cycles is {\it special} if
	
	\begin{itemize}
		\item for each arrow $\alpha$ in $Q$, $\alpha$ must occur in some $C_i$. Moreover, if $\alpha$ is a non-single arrow, then $\alpha$ occurs in exactly one $C_i$.
	\end{itemize}
	Now we can define the basic cycles and a special set of basic cycles in $A$.
	
	By Proposition \ref{soc-free} and Definition \ref{sqb}, for each non-single arrow $\alpha$, there are exactly one arrow $\beta$ and exactly one arrow $\gamma$, such that $\alpha\beta\notin I$ and $\gamma\alpha\notin I$. Since $A$ is finite dimensional, there exists a maximal path $C_\alpha'=c_1c_2$ contains $\alpha$  which is nonzero in $\soc(A)$. 
	If $C_\alpha'$ is not a cycle, then $c_2c_1=0$ contradict with the symmetry of $A$. Let $C_\alpha$ be the basic cycle of $C_\alpha'$.
	Since $A=kQ/I$ is indecomposable and $Q$ contains at least one non-single arrow, there is no basic cycles consisting of all single arrows in $A$. 
	
	Consider the set of all cycles $\mathcal{S}$ in $Q$. Define the cyclic permutation on $\mathcal{S}$ which is given by $\tau:\mathcal{S}\rightarrow\mathcal{S}$, $\alpha_{n}\cdots\alpha_{1}\mapsto \alpha_{1}\alpha_{n}\cdots\alpha_{2}$. Therefore, under cyclic permutation, we find a unique special set of basic cycles $\{C_1,\cdots,C_n\}$ in $A$.
	
	\begin{lemma}\label{cycle}
		Let $A=\qa$ be a symmetric special quasi-biserial algebra and $C$ be a basic cycle of $A$ defined above. $\psi$ is the symmetric $k$-linear form of $A$. The following properties hold.
		\begin{enumerate}
			\item There is an integer $\m(C)>0$ such that $C^{\m(C)}$ is a nonzero element in $\soc(A)$.
			
			\item We have $\m(C)=\m(\tau^i(C))$ and $(\tau^i(C))^{\m(\tau^i(C))}$ is a nonzero element in $\soc(A)$, for $0\leq i\leq \ell(C)-1$.
			
			\item We have $\psi(C^{\m(C)}) =\psi(\tau^i(C)^{\m(\tau^i(C))})$, for $0\leq i\leq \ell(C)-1$.
		\end{enumerate}
	\end{lemma}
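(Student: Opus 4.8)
The plan is to push all three statements through the symmetric form $\psi$, using only that $\psi(ab)=\psi(ba)$ and that $\ker\psi$ contains no nonzero one‑sided ideal, so that $\psi$ is nonzero on each of the one‑dimensional spaces $\soc(Ae_v)$. Since $C$ contains at least one non‑single arrow (recall that no basic cycle consists entirely of single arrows, $A$ being indecomposable with a non‑single arrow), I fix such an arrow $\alpha$ in $C$; because all three assertions are stable under replacing $C$ by a cyclic rotation of itself, I may normalize so that $C=C_\alpha$, in which case $C^{\m(C)}$ is the maximal nonzero path $C_\alpha'$ through $\alpha$ that was produced before the lemma and shown to lie in $\soc(A)$. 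For (1) I then set $\m(C):=\max\{m\ge 1:C^m\ne 0\}$, which is finite since $\rad(A)$ is nilpotent and positive since $C$ is a nonzero path; $C^{\m(C)}=C_\alpha'$ is the required nonzero socle element, and this exponent is unique, for if $C^{m'}\in\soc(A)$ with $m'<\m(C)$ then $C^{\m(C)}=C^{\m(C)-m'}C^{m'}\in\rad(A)\cdot\soc({}_AA)=0$, a contradiction.

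The real work is (2). Writing $C=\alpha_{\ell}\cdots\alpha_1$ with $\ell=\ell(C)$ and, for $1\le i\le\ell-1$ (the case $i=0$ being trivial), setting $u=\alpha_i\cdots\alpha_1$ and $w=\alpha_{\ell}\cdots\alpha_{i+1}$, one has $C=wu$, $\tau^i(C)=uw$ and $\tau^i(C)^m=(uw)^m=u(wu)^{m-1}w=u\,C^{m-1}\,w$ for every $m\ge1$, whence by symmetry of $\psi$
\[ \psi(\tau^i(C)^m)=\psi\bigl(u\cdot(C^{m-1}w)\bigr)=\psi\bigl((C^{m-1}w)\cdot u\bigr)=\psi(C^{m-1}wu)=\psi(C^m). \]
Taking $m=\m(C)$, the left side is nonzero because $0\ne C^{\m(C)}\in\soc(Ae_v)$ forces $\psi(C^{\m(C)})\ne0$, so $\tau^i(C)^{\m(C)}\ne0$. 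Next I would show $\tau^i(C)^{\m(C)}\in\soc(A)$: it is a nonzero path containing $\alpha$ of length $\m(C)\ell(C)=\ell(C_\alpha')$, and this is the largest possible length of a nonzero path through $\alpha$ (any such path is a window of the forced bi‑directional extension of $\alpha$, whose maximal nonzero realizations are the cyclic paths $C_\alpha^{\m(C_\alpha)}$, all of that length). Hence prepending or appending any arrow to $\tau^i(C)^{\m(C)}$ yields a path through $\alpha$ of length $\ell(C_\alpha')+1$, which must lie in $I$; that is, $\tau^i(C)^{\m(C)}\in\soc({}_AA)\cap\soc(A_A)=\soc(A)$. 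Finally $\tau^i(C)$ is again a basic cycle through the non‑single arrow $\alpha$ (a cyclic rotation of a basic cycle is basic), so the uniqueness proved in (1), applied to $\tau^i(C)$, gives $\m(\tau^i(C))=\m(C)$.

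Statement (3) is then immediate: by (2) we have $\m(\tau^i(C))=\m(C)$, and the displayed identity at $m=\m(C)$ gives $\psi(\tau^i(C)^{\m(\tau^i(C))})=\psi(\tau^i(C)^{\m(C)})=\psi(C^{\m(C)})$. The step I expect to be the genuine obstacle is the socle membership inside (2): the identity above together with non‑degeneracy of $\psi$ hand over both the non‑vanishing of $\tau^i(C)^{\m(C)}$ and the correct value of $\psi$ on it essentially for free, but showing that this element admits no arrow extension on either side — equivalently, that no nonzero path through a non‑single arrow can be longer than $\ell(C_\alpha')$ — is exactly the point where the special quasi‑biserial hypothesis (the at‑most‑one‑continuation property of paths containing a non‑single arrow) must be used, in the form established in the discussion preceding the lemma.
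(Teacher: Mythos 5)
Parts (1) and (3), and the non-vanishing half of (2), are fine and coincide with the paper's own mechanism: the rotation trick $\psi(\tau^i(C)^m)=\psi(C^m)$ together with non-degeneracy of $\psi$ on the socle. The problem is exactly where you predicted it, and your proposal does not actually close it. To get $\tau^i(C)^{\m(C)}\in\soc(A)$ you invoke the claim that $\ell(C_\alpha')$ is the largest possible length of a nonzero path through $\alpha$, justified by the parenthetical that the maximal nonzero realizations of the forced extension of $\alpha$ ``are the cyclic paths $C_\alpha^{\m(C_\alpha)}$, all of that length''. That parenthetical is circular: the assertion that every maximal nonzero path through $\alpha$ is a rotation of $C_\alpha^{\m(C_\alpha)}$, hence of the same length, is precisely (a consequence of) statement (2). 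Moreover, the discussion preceding the lemma does not establish the length bound you defer to: it only produces \emph{one} non-extendable nonzero path $C_\alpha'$ through $\alpha$ and shows it is a cycle lying in $\soc(A)$. ``Maximal'' there means non-extendable, not of maximum length. Condition (2) of Definition \ref{sqb} forces the left and right continuations of $\alpha$ arrow by arrow, so every nonzero path through $\alpha$ is indeed a window of one forced bi-directional sequence; but whether a given window is nonzero is not a condition on each side separately, so a priori two non-extendable nonzero paths through $\alpha$ can have different lengths. Hence from ``$\tau^i(C)^{\m(C)}$ is nonzero of length $\ell(C_\alpha')$'' you cannot conclude, as written, that no arrow extension of it survives.

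The paper closes this step by a different argument, which is the missing ingredient: supposing $\tau(C)^{\m(C)}\notin\soc(A)$, an arrow extension $\tau(C)^{\m(C)}\beta\neq 0$ forces (by the at-most-one-continuation property applied to a subpath of $\tau(C)^{\m(C)}$ containing a non-single arrow, whose admissible continuation is already occupied by the rotated-out arrow $\alpha_1$, since $\alpha_t\cdots\alpha_2\alpha_1$ is a nonzero subpath of $C^{\m(C)}$) that $\beta=\alpha_1$; but then $\tau(C)^{\m(C)}\alpha_1=\alpha_1C^{\m(C)}=0$ because $C^{\m(C)}$ already lies in the socle, a contradiction, and left extensions are handled symmetrically (then one iterates over single rotations). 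Alternatively, your length argument can be repaired at the source: choose $C_\alpha'$ from the start to be a nonzero path through $\alpha$ of \emph{maximal length} (it exists by finite-dimensionality and still enjoys all the pre-lemma properties: non-extendable, hence in $\soc(A)$, hence a cycle with basic root $C_\alpha$, and then $C_\alpha'=C_\alpha^{\m(C_\alpha)}$); with that choice the bound ``no nonzero path through $\alpha$ is longer than $\ell(C_\alpha')$'' holds by construction and your conclusion follows. As the proposal stands, neither of these is carried out, so the socle-membership step of (2) is a genuine gap; the remaining steps (uniqueness of the exponent, $\m(\tau^i(C))=\m(C)$, and (3)) are correct and essentially as in the paper.
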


	\begin{proof}
		Part (1) is directly follows from the definition of the basic cycles in $A$.
		
		Suppose that $C^{\m(C)}$ is a nonzero element in $\soc(A)$. It is suffices to show $\tau(C)^{\m(C)}\in\soc(A)$. First note that, using $\psi(xy)=\psi(yx)$, for any $x,y\in A$, we see that  $\psi(C^{\m(C)}) =\psi(\tau(C)^{\m(C)})$. Hence $\tau(C)^{\m(C)}$ is not zero in $A$. Suppose for contradiction that $\tau(C)^{\m(C)}\notin\soc(A)$, then there exist an arrow $\alpha$ such that $\tau(C)^{\m(C)}\alpha\neq 0$. However, $\tau(C)^{\m(C)}\alpha=\alpha C^{\m(C)}$ and hence $\tau(C)^{\m(C)}\alpha=0$ in $A$ since $C^{\m(C)}\in\soc(A)$, a contradiction.
		
		Part (3) follows since $\psi(xy)=\psi(yx)$ and there exist paths $p_1,p_2$ in $Q$ such that $p_1p_2=C^{\m(C)}$ and $p_2p_1=\tau^i(C)^{\m(\tau^i(C))}$, for $0\leq i\leq \ell(C)-1$.
	\end{proof}
	
	We give a generalized version of \cite[Lemma 4.10]{GS1} since the proof of this lemma did not use the condition that $A$ is special multiserial.
	
	\begin{lemma}\label{p=p'}
		Let $A=\qa$ be an indecomposable symmetric $k$-algebra and let $\psi$ be the non-degenerate symmetric $k$-linear form of $A$. Let $e$ be a primitive idempotent in $A$ and let $p$ and $p'$ be nonzero elements in $e\soc(A)e$ such that $\psi(p)=\psi(p')$. Then $p=p'$.
	\end{lemma}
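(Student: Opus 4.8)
The plan is to show that the restriction of $\psi$ to $e\soc(A)e$ is injective; the lemma then follows immediately by applying this to $q=p-p'\in e\soc(A)e$, since $\psi(p-p')=\psi(p)-\psi(p')=0$. So it suffices to prove: if $q\in e\soc(A)e$ and $\psi(q)=0$, then $q=0$. The structural inputs I will need are that $\psi$ is a symmetric non-degenerate $k$-linear form on $A$; that $\soc(A)=\soc(A_A)$ because $A$ is symmetric, so in particular $q\,\rad(A)=0$ for $q\in\soc(A)$; and that $eAe$ is split local, i.e.\ $eAe=ke\oplus\rad(eAe)$ with $\rad(eAe)=e\,\rad(A)\,e$ --- this holds because $A=\qa$ is basic, so the primitive idempotent $e$ is conjugate to a vertex idempotent $e_i$ and $e_iAe_i/\rad(e_iAe_i)\cong k$.

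The key point is a short computation showing that, for a fixed $q\in e\soc(A)e$ and an arbitrary $a\in A$, the scalar $\psi(qa)$ is a multiple of $\psi(q)$. Using $q=eqe$ together with the cyclicity identity $\psi(xy)=\psi(yx)$ and associativity, one rewrites
$$\psi(qa)=\psi((eqe)a)=\psi(a(eqe))=\psi((ae)(qe))=\psi((qe)(ae))=\psi(q(eae)).$$
Now write $eae=\lambda e+j$ with $\lambda\in k$ and $j\in\rad(eAe)=e\,\rad(A)\,e\subseteq\rad(A)$, using that $eAe$ is split local. Then $q(eae)=\lambda(qe)+qj=\lambda q+qj$, and since $q\in\soc(A_A)$ we have $q\,\rad(A)=0$, hence $qj=0$. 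Therefore $\psi(qa)=\lambda\,\psi(q)$.

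Putting this together, if $q\in e\soc(A)e$ satisfies $\psi(q)=0$, then $\psi(qa)=0$ for every $a\in A$, and the non-degeneracy of $\psi$ on $A$ forces $q=0$. Applying this to $q=p-p'$ yields $p=p'$, which is the assertion of the lemma. (As a byproduct one obtains $\dim_k e\soc(A)e\le 1$, but this is not needed here.)

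The argument is elementary, and the only point requiring some care is the idempotent bookkeeping: verifying $\rad(eAe)=e\,\rad(A)\,e$, that this sits inside $\rad(A)$, and that $eAe=ke\oplus\rad(eAe)$ for a primitive idempotent of a bound quiver algebra, together with the identity $\soc(A)=\soc(A_A)$ for symmetric algebras. All of these are standard, so I do not anticipate a genuine obstacle; the substance of the lemma is simply that on a symmetric algebra the form $\psi$ separates the (at most one-dimensional) space $e\soc(A)e$.
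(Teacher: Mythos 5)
Your proof is correct. Note that the paper does not actually write out a proof of this lemma: it invokes \cite[Lemma 4.10]{GS1}, observing that the argument there never uses the special multiserial hypothesis. Your argument is essentially that standard one, just packaged slightly differently: instead of first noting that $e\soc(A)e$ is one-dimensional (socle of an indecomposable projective over a self-injective split basic algebra is simple) and then checking $\psi(p)\neq 0$, you prove directly that $\psi$ is injective on $e\soc(A)e$ via the computation $\psi(qa)=\psi(q(eae))=\lambda\psi(q)$, using $eAe=ke\oplus e\rad(A)e$ and $q\,\rad(A)=0$, so that $qA\subseteq\ker\psi$ would be a nonzero right ideal if $q\neq 0$, contradicting the defining property of the symmetrizing form. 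All the ingredients you rely on (split locality of $eAe$ for a bound quiver algebra, $\rad(eAe)=e\rad(A)e$, equality of left and right socles for symmetric algebras, and that $\ker\psi$ contains no nonzero one-sided ideals) are legitimate here since $A=\qa$, so there is no gap; your remark that one-dimensionality of $e\soc(A)e$ drops out as a byproduct is exactly the content of the cited lemma's usual proof.
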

	
	For the next result we need to assume that the field $k$  is algebraically closed. Keeping the notation above, we have the following.
	
	\begin{proposition}\label{prop:quiver-relations}
		Let $k$ be an algebraically closed field, let $A$ be a symmetric special quasi-biserial $k$-algebra, and let $Q$ be the quiver of $A$. Then there exist a surjection $\pi:kQ\rightarrow A$ such that
		\begin{enumerate}
			\item $\ker\pi$ is admissible, and
			
			\item if $C_1$ and $C_2$ are basic cycle starting at a same vertex $v$ in $Q$, then $\pi(C_1^{\m(C_1)})=\pi(C_2^{\m(C_2)})$.
		\end{enumerate}
	\end{proposition}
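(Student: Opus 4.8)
The plan is to start from an arbitrary admissible presentation of $A$ and then rescale the arrows of $Q$ so as to force the socle identities in~(2).

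First I would fix a non-degenerate symmetric linear form $\psi$ on $A$ and, since $A$ is basic and $k$ is algebraically closed, a surjection $\pi_0\colon kQ\to A$ with admissible kernel; identify $A=kQ/\ker\pi_0$. Because $A$ is symmetric, the socle of the indecomposable projective $Ae_v$ is isomorphic to $S_v$ for each $v\in Q_0$, so $e_v\,\soc(A)\,e_v\cong e_vS_v$ is one-dimensional, and non-degeneracy of $\psi$ together with $\soc(A)\cdot\rad(A)=0$ shows that $\psi$ restricts to a non-zero form on it. By Lemma~\ref{cycle}(1), for each basic cycle $C$ at a vertex $v$ the element $C^{\m(C)}$ is a non-zero element of $e_v\,\soc(A)\,e_v$, so $d_C:=\psi\!\big(\pi_0(C^{\m(C)})\big)\in k^{\times}$; by Lemma~\ref{cycle}(2),(3) this scalar does not depend on the chosen cyclic rotation of $C$.

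Next I would renormalise. By the ``special set'' property of $\{C_1,\dots,C_n\}$ recalled before the statement, every non-single arrow of $Q$ lies in exactly one basic cycle, and no basic cycle consists only of single arrows; hence for each basic cycle $C$ I may choose a non-single arrow $\alpha_C$ occurring in $C$, and distinct cycles receive distinct such arrows, each occurring in no other basic cycle. Let $t_C\ge 1$ be the number of occurrences of $\alpha_C$ in one traversal of $C$, set $\mu_\alpha=1$ for $\alpha\neq\alpha_C$, and let $\mu_{\alpha_C}\in k^{\times}$ be a $t_C\m(C)$-th root of $d_C^{-1}$, which exists as $k$ is algebraically closed. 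Let $\phi$ be the automorphism of $kQ$ fixing the vertices with $\phi(\alpha)=\mu_\alpha\alpha$, and put $\pi:=\pi_0\circ\phi$. Then $\ker\pi=\phi^{-1}(\ker\pi_0)$ is admissible and $\pi$ is surjective, giving~(1); and since among the arrows of $C$ only $\alpha_C$ is rescaled, $\pi(C^{\m(C)})=\mu_{\alpha_C}^{\,t_C\m(C)}\,\pi_0(C^{\m(C)})=d_C^{-1}\,\pi_0(C^{\m(C)})$, so $\psi\!\big(\pi(C^{\m(C)})\big)=1$ for every basic cycle $C$ and every rotation of it.

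Finally, if $C_1,C_2$ are basic cycles through a common vertex $v$, rotate them to start at $v$; then $\pi(C_1^{\m(C_1)})$ and $\pi(C_2^{\m(C_2)})$ are non-zero elements of the one-dimensional space $e_v\,\soc(A)\,e_v$ sharing the $\psi$-value $1$, so $\pi(C_1^{\m(C_1)})=\pi(C_2^{\m(C_2)})$ by Lemma~\ref{p=p'}, which is~(2). The main obstacle I expect is the combinatorial bookkeeping behind the renormalisation — verifying that the arrows $\alpha_C$ are genuinely independent, so that rescaling $\alpha_C$ alters only $d_C$ — together with the (standard) self-injective-algebra facts that $e_v\,\soc(A)\,e_v$ is one-dimensional and that $\psi$ is non-zero on it; granting these, the rest of the argument is formal.
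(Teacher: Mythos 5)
Your proof is correct and takes essentially the same route as the paper: starting from an arbitrary admissible presentation, rescale one designated non-single arrow in each basic cycle of the special set so that $\psi\big(\pi(C^{\m(C)})\big)=1$, then conclude with Lemma \ref{p=p'}, using Lemma \ref{cycle} for independence under cyclic rotation. If anything, you are a bit more careful than the paper, which writes the exponent as $\m(C)$ rather than your $t_C\m(C)$, i.e.\ tacitly assumes the chosen arrow occurs exactly once in a traversal of its cycle.
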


	\begin{proof}
		Since $A$ is assumed to be finite-dimensional and basic, there exist a surjection $\pi':kQ\rightarrow A$ such that $\ker(\pi')$ is admissible. Let $\psi$ be the non-degenerate symmetric $k$-linear form of $A$. We now construct a surjection $\pi:kQ\rightarrow A$ by defining, for each arrow $\alpha$ in $Q$, a nonzero constant $\lambda_\alpha\in k$ such that by setting $\pi(\alpha)=\lambda_\alpha(\pi'(\alpha))$ the desired properties hold. Since $\ker(\pi')$ is admissible, clearly $\ker(\pi)$ is admissible.
		
		Without loss of generality, we assume each basic cycle in special set of $A$ contains a non-single arrow $\alpha$ and denote this basic cycle by $C_\alpha$ and $\m(C_\alpha)$ by $\m_\alpha$. By Lemma \ref{cycle}, $C_\alpha^{\m_\alpha}\in e_v\soc(A)e_v$ where $v=s(C_\alpha)$ and $e_v$ is the associated primitive idempotent in $A$. We know that $\psi(C_\alpha^{\m_\alpha})\neq 0$ in $A$. Let $\lambda_\alpha=(\psi(C_\alpha^{\m_\alpha}))^{-1/\m_\alpha}$, and for arrow $\beta\mid C_\alpha$ with $\beta\neq\alpha$, set $\lambda_\beta=1$.
		Then we have that for each basic cycle $C$ in $A$, $\psi(C^{\m(C)})=1$. Applying Lemma \ref{p=p'}, we have $\pi(C_1^{\m(C_1)})=\pi(C_2^{\m(C_2)})$ for each basic cycle $C_1$ and $C_2$ in the assumption.
	\end{proof}
	
	Therefore, let $A=\qa$ be a symmetric special quasi-biserial algebra with $k$ is algebraically closed. By Proposition \ref{prop:quiver-relations}, we can regard every path $p$ in $Q$ as an element in $A$ under the given surjection $\pi:kQ\rightarrow A$. Without loss of generality, we can assume that a minimal set of relations $\rho$ generating $I$ contains only zero relations and commutativity relations of the form $p-q$ for $p,q$ paths in $Q$ such that $p,q\notin\rho$. Otherwise, there exists a relation of the form $p-k_0q$ with $p\neq q$ in $\rho$, then we can extend $p$ and $q$ to some cycles $C_1^{\m(C_1)}=pp'$ and $C_2^{\m(C_2)}=qq'$, respectively. However, since $C_1^{\m(C_1)}=C_2^{\m(C_2)}$ in $\soc(A)$ and $A$ is special quasi-biserial, we have $p'=q'$ and $k_0=1$.
	
	Let $i$ be a vertex in $Q$. If the  indecomposable projective $A$-module $P_i$ at $i$ is uniserial, then there exists a unique non-trivial maximal cycle $p$ in $(Q,I)$ with $s(p)=t(p)=i$. Then we write $P_i=P_i(p,e_i)=P_i(p)$ or $P(p)$ for short. If $P_i$ is quasi-biserial, then there exist two distinct non-trivial maximal cycle $p,q$ in $Q$ with $s(p)=s(q)=t(p)=t(q)=i$ such that $p-q\in I$. Denote the maximal paths by $r_1,r_2$ such that $p-q=r_1(p_1-q_1)r_2$. Then $p_1-p_2\in I$. Otherwise, the ideal $I'=\langle p_1-p_2\rangle$ is a nonzero ideal such that for the symmetric bilinear form $\psi$ of $A$, $\psi(I)=0$, contradict with $A$ symmetric. At that time, we write $P_i=P_i(p,q)$ or $P(p,q)$ for short. Since $A$ is symmetric, the indecomposable projective module at $i$ is also the indecomposable injective module at $i$.
	
	\begin{lemma}\label{k-basis of proj}
		Let $A=\qa_A$ and $B=\qa_B$ be symmetric special quasi-biserial algebras. Suppose that for every vertex in $Q$, the projective indecomposable modules of $A$ and $B$ have $k$-basis given by the same paths in $Q$. Then the algebras $A$ and $B$ are isomorphic.
	\end{lemma}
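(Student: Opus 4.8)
The plan is to show that, once the normalized presentations of $A$ and $B$ from the discussion above are fixed, the two defining ideals coincide: $I_A=I_B$, whence $A=kQ/I_A=kQ/I_B=B$ and in particular $A\cong B$. To this end, for each vertex $i$ fix a set $\mathcal{B}_i$ of paths in $Q$ that is simultaneously a $k$-basis of $Ae_i$ and of $Be_i$ (this exists by hypothesis, and it must contain the trivial path $e_i$, so $\dim_kA=\sum_i|\mathcal{B}_i|=\dim_kB$). Because each of $I_A$ and $I_B$ is generated by zero relations and commutativity relations $p-q$ with unit coefficient, every path $w$ in $Q$ is congruent modulo $I_A$ either to $0$ or to a single path --- one may only delete $w$ when a monomial generator is a subpath of it, or substitute one branch of a commutativity relation for the other, and neither operation produces a genuine $k$-linear combination. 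Hence $\pi_A(w)$ equals $0$ or $\pi_A(b)$ for a unique $b\in\mathcal{B}_{s(w)}$, which I write $\mathrm{nf}_A(w)$, and similarly $\mathrm{nf}_B(w)$. Letting $\phi\colon A\to B$ be the $k$-linear isomorphism with $\phi(\pi_A(b))=\pi_B(b)$ for all $b\in\bigcup_i\mathcal{B}_i$, the map $\phi$ is multiplicative --- hence an algebra isomorphism, and $I_A=I_B$ --- exactly when $\mathrm{nf}_A(w)=\mathrm{nf}_B(w)$ for all paths $w$ (apply this to $w=b'b$ with $b,b'\in\mathcal{B}$).

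The remaining task is therefore to prove $\mathrm{nf}_A=\mathrm{nf}_B$, and I would do this by reconstructing, vertex by vertex and from the sets $\mathcal{B}_i$ alone, the data that determine $I_A$ and $I_B$. Using the structural analysis of the indecomposable projectives recorded in the discussion preceding this lemma, one reads off from $\mathcal{B}_i$ whether $P_i$ is uniserial of the form $P(p)$ or of the form $P(p,q)$, and recovers the maximal cycle(s) at $i$ together with --- in the second case --- the middle parts $p_1,q_1$ of the associated commutativity relation: the branch-and-merge pattern of $\mathcal{B}_i$ displays the common initial segment $r_2$, the two branches, and the common final segment $r_1$. Since every nonzero path starting at $i$ is an initial subpath of some maximal cycle at $i$ (again by the description of $P_i$ and the symmetry of the algebra), the maximal cycles determine precisely which paths vanish, and $p_1,q_1$ determine the commutativity relations; carrying out the same reading for $B$ gives that the normalized minimal generating sets of $I_A$ and of $I_B$ coincide.

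The hardest part will be this reconstruction step, and its difficulty comes from the fact that a commutativity relation $p_1=q_1$ need not preserve path length --- a branch of length $\ell(p_1)$ can be identified with one of length $\ell(q_1)$ with $\ell(p_1)\ne\ell(q_1)$, as already happens for Brauer graph algebras. Consequently the prefix-poset of $\mathcal{B}_i$ does not by itself encode the $A$-module structure of $Ae_i$, so one cannot simply induct on path length. My proposal is to run the argument from the socle outward: the socle representative of $Ae_i$ inside $\mathcal{B}_i$ is singled out by the non-degenerate symmetric form via Lemma~\ref{p=p'} and the normalization of $\psi$ in Proposition~\ref{prop:quiver-relations} (which forces $\psi(\pi_A(b))\ne0$ precisely for that $b$), and one then propagates inward, using at each step that $A$ and $B$ are symmetric and special quasi-biserial to verify that prepending a single arrow to a basis path behaves identically in the two algebras. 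Showing that these propagations for $A$ and $B$ never diverge is where the bulk of the work lies.
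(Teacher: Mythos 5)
Your reduction of the lemma to the statement $\mathrm{nf}_A(w)=\mathrm{nf}_B(w)$ for all paths $w$ is sound but essentially circular: since both ideals are (after the normalization preceding the lemma) generated by zero relations and by differences $p-q$ of paths, agreement of the normal forms on all paths is just a reformulation of $I_A=I_B$, which is the content of the lemma. The part of your argument that would actually establish this --- reconstructing the maximal cycles and the commutativity data from the sets $\mathcal{B}_i$ alone, and then the socle-outward propagation showing that the two algebras ``never diverge'' --- is only described as a program, and you yourself flag it as where the bulk of the work lies. As written, nothing rules out that two different admissible ideals of the prescribed shape could produce the same path bases for all projectives; excluding this is precisely the lemma, so the proposal has a genuine gap at its central step.

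The paper closes this gap with a short direct comparison of generating relations, which bypasses the reconstruction problem (and the length-non-preservation issue you worry about) entirely, because the hypothesis that the bases consist of \emph{the same paths} can be used head-on. If a path $w=\alpha_n\cdots\alpha_1$ lies in $I_A$ but not in $I_B$, then in $B$ it extends to a maximal path $p=p_1w$ with $P^B_{s(w)}\cong P(p,q)$ (with $q$ possibly trivial), so $p$ contributes to the path basis of $P^B_{s(w)}$; but $p\in I_A$, so $P^A_{s(w)}$ cannot have that basis, contradicting the hypothesis, and by symmetry the monomial relations of $I_A$ and $I_B$ coincide. Likewise, if $p,q\notin I_A$ and $0\neq p-q\in I_A$ but $p-q\notin I_B$, then $p$ and $q$ occur as independent basis paths of $P^B_i$, while the basis of $P^A_i$ contains only one of them since $p=q$ in $A$ --- again contradicting the hypothesis. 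If you want to salvage your outline, replace the socle-outward propagation by this direct relation-by-relation comparison; the inward induction you sketch is not needed and would require substantially more care than the two-line contradiction above.
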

	
	\begin{proof}
		We prove this lemma by showing that $I_A=I_B$. Denote the indecomposable projective $A$-module (resp. $B$-module) at vertex $i$ by $P_i^A$ (resp. $P_i^B$). To complete the proof, it suffices to demonstrate that every generating relation of $I_A$ must also be a generating relation of $I_B$.
		
		Suppose $\alpha_{n}\cdots\alpha_{1}$ is a path in $Q$ with $\alpha_{n}\cdots\alpha_{1}\in I_A$. Now assume that $\alpha_{n}\cdots\alpha_{1}\notin I_B$. Then there is a path $p_1$ in $Q$ such that $p=p_1\alpha_{n}\cdots\alpha_{1}$ and $P_i^B=P(p,q)$ for some possibly trivial path $q$ with $i=s(\alpha_{1})$. But $p\in I_A$ since $\alpha_{n}\cdots\alpha_{1}\in I_A$. Therefore, $P_i^A\not\cong P(p,q)$, a contradiction and thus $\alpha_{n}\cdots\alpha_{1}\in I_B$. By symmetry of the argument this implies that $\alpha_{n}\cdots\alpha_{1}\in I_A$ if and only if $\alpha_{n}\cdots\alpha_{1}\in I_B$. Now suppose that $p,q$ are distinct parallel paths in $Q$ with $p,q\notin I_A$, $s(p)=s(q)=i$ and $0\neq p-q\in I_A$. Then there exists a path $p_0$ in $Q$ with $t(p_0)=i$ such that $p_0p=p_0q$ are cycles in $Q$. Moreover, $P_i^A\cong P(p_0p,p_0q)$. If $p-q\notin I_B$, since $p,q\notin I_B$, the $k$-basis of $P_i^B$ contains $p$ and $q$ independently. At the same time, the $k$-basis of $P_i^A$ only contains $p$ since $p=q$ in $A$, a contradiction.	
	\end{proof}
	
	Finally, we prove the main theorem in this section.
	
	\begin{theorem}\label{ssqb is sf-BGA}
		Let $A=\qa$ be a finite-dimensional $k$-algebra with $k$ algebraically closed. Then $A$ is a symmetric special quasi-biserial algebra if and only if it is an sf Brauer graph algebra.
	\end{theorem}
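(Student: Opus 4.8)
The ``if'' direction is precisely Proposition~\ref{sf-BGA is ssqb}, so the substance of the theorem lies in the converse, which is what the machinery built above is designed for. The plan is as follows. Let $A=\qa$ be a symmetric special quasi-biserial algebra; we may assume it is basic and indecomposable. If $\rad^2(A)=0$ then $A$ is a symmetric Nakayama algebra, hence a Brauer tree algebra and in particular an sf Brauer graph algebra, so from now on $\rad^2(A)\neq 0$. Then by Proposition~\ref{prop:self-socfree} the socle of $A$ is arrow-free, so that Proposition~\ref{soc-free} applies; together with the discussion preceding Lemma~\ref{cycle} this produces the unique special set of basic cycles $\{C_1,\dots,C_n\}$ with multiplicities $\m(C_j)$ as in Lemma~\ref{cycle}, and Proposition~\ref{prop:quiver-relations} gives a surjection $\pi\colon kQ\to A$ with admissible kernel such that $\pi(C_1^{\m(C_1)})=\pi(C_2^{\m(C_2)})$ whenever $C_1,C_2$ are basic cycles with a common source; moreover, as explained after that proposition, a minimal generating set of $\ker\pi$ may be taken to consist of monomial and commutativity relations. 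The strategy is then to build an sf Brauer graph $(\Gamma_L,\m)$ for which $kQ_{\Gamma,L}/I_{\Gamma,L}$ and $A$ have, at every vertex of $Q$, projective indecomposable modules with the same $k$-basis of paths in $Q$, and to conclude with Lemma~\ref{k-basis of proj}.

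To construct $(\Gamma_L,\m)$: take one edge $\bar i$ of $\Gamma$ for each $i\in Q_0$, and one non-truncated vertex $w_j$ for each basic cycle $C_j$, with $\m(w_j):=\m(C_j)$; whenever $P_i$ is uniserial, attach $\bar i$ to a newly created truncated vertex. Each $i\in Q_0$ carries at most two arrows out of it, lying on the basic cycles through $i$, and I declare $\bar i$ incident to the corresponding vertices $w_j$ (or to a truncated vertex in the uniserial case); this fixes the incidence of $\Gamma$, hence $s$ and $\iota$. The permutation $\rho$ is obtained at each $w_j$ by reading the cyclic order in which arrows occur along $C_j$, so that $\rho$ is inverse to the permutation $\sigma\colon\alpha_h\mapsto\alpha_{h^-}$ of the Brauer-graph construction. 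Finally I put $\bar i\in L$ exactly when $i$ has a single arrow out of it but $P_i$ is not uniserial --- equivalently, when that single arrow continues, under $\pi$, into two distinct basic cycles. One must then verify that each such $\bar i$ is labelable, that at every vertex of valency $\ge 2$ at least two incident edges are unlabeled (this follows from the restriction in Definition~\ref{sqb}(2) that a path through a non-single arrow has at most one non-vanishing extension on each side), and that $\Gamma$ is connected (from indecomposability of $A$); thus $(\Gamma_L,\m)$ is a genuine sf Brauer graph.

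With $(\Gamma_L,\m)$ at hand, the quotient $kQ_\Gamma/I_L$ that defines $Q_{\Gamma,L}$ identifies precisely the pairs of arrows of $Q_\Gamma$ that were split copies of one single arrow of $Q$ crossing a labeled edge, so $Q_{\Gamma,L}\cong Q$ canonically. Under this identification, $\pi(C_j^{\m(C_j)})$ is a cyclic rotation of the maximal socle path around $w_j$; hence the relations of type (i) in $I_{\Gamma,L}$ are the commutativity relations $p-q$ at the quasi-biserial projectives $P_i=P(p,q)$, which hold in $A$ by Proposition~\ref{prop:quiver-relations}(2) and the decomposition $p-q=r_1(p_1-q_1)r_2$ used above; the relations of type (ii) are the monomial relations $\pi(C^{\m(C)}\alpha)=0$ forced by $C^{\m(C)}\in\soc(A)$ in Lemma~\ref{cycle}(1); and the relations of type (iii) are exactly the monomial zero relations of $A$, because by Definition~\ref{sqb}(2) a path of $Q$ is nonzero in $A$ if and only if, after deleting a head and a tail, it is a subpath of some rotation of some $\pi(C_j^{\m(C_j)})$, which is what (iii) encodes. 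Therefore $A$ and $kQ_{\Gamma,L}/I_{\Gamma,L}$ are symmetric special quasi-biserial algebras (the second by Proposition~\ref{sf-BGA is ssqb}) whose projective indecomposables have the same $k$-bases, and Lemma~\ref{k-basis of proj} gives $A\cong kQ_{\Gamma,L}/I_{\Gamma,L}$.

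The main obstacle I anticipate is the construction of $\Gamma_L$ itself: deciding correctly, for each $i\in Q_0$, whether $\bar i$ is ordinary, labeled, or truncated-adjacent, and checking that the resulting $\iota$-orbits and $\rho$-cycles really do glue into a ribbon graph in which every selected edge is labelable and the valency condition on $L$ is met. This is genuinely delicate when a basic cycle revisits a vertex (so $\Gamma$ acquires a loop) or when several single arrows interact, and it is exactly here that the full force of Definition~\ref{sqb}(2) is used. Once the combinatorial model is certified to be a labeled ribbon graph, the comparison of the three families of relations and the application of Lemma~\ref{k-basis of proj} are routine.
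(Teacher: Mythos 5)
Your proposal follows essentially the same route as the paper: reduce to the case $\rad^2(A)\neq 0$ and a basic indecomposable algebra, use the arrow-free socle to extract the special set of basic cycles together with Lemma \ref{cycle} and Proposition \ref{prop:quiver-relations}, build the labeled ribbon graph with one edge per vertex of $Q$, one non-truncated vertex per basic cycle (multiplicity $\m(C_j)$), truncated vertices for the unbranched ends, and labels exactly at those edges whose unique outgoing arrow lies on two distinct basic cycles, then conclude with Lemma \ref{k-basis of proj}. The verifications you flag as delicate (labelability, the valency condition on $L$, loops from cycles revisiting a vertex) are likewise left implicit in the paper's own proof, so your attempt matches it both in strategy and in level of detail.
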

	
	\begin{proof}
		In \cite[Section 2.3.1]{Sch}, the author have constructed a ribbon graph of a symmetric special biserial algebra. We generalized this construction to the case of symmetric special quasi-biserial cases.
		
		Without loss of generality, assume $\rad^2(A)\neq 0$. Then by Proposition \ref{soc-free}, the socle of $A$ is arrow-free, and thus there is a unique set $S=\{C_1,\cdots,C_r\}$ of basic cycles in $A$ under cyclic permutation. Now we construct a ribbon graph $\Gamma_A=(V_A,H_A,s,\iota,\rho)$ associated with $A$, a degree function $d$ and a quantized function $q$ of $A$.
		
		Let $H_A=\{h_1, h_{-1}\;|\; h\in Q_0\}$. Then for each $h_i\in H_A$ ($i=\pm 1$), define $\iota(h_i)=h_{-i}$. Denote by $V_A'$ the vertices in $\Gamma_A$ which are not truncated and let $V_A'$ coincide with the set $S$. Define $\m(v)=\m(C_i)$ for each $v\in V_A'$ corresponding to $C_i$ in $S$ ($\m$ is defined in Lemma \ref{cycle}).
		
		For each $h\in Q_0$, 
		\begin{itemize}
			\item if there is exactly one basic cycle $C_v$ (which is corresponding to $v\in V_A'$) in $S$ passing through $h$, then let $s(h_1)=v$; 
			\item if there are two basic cycles $C_v,C_w$ (which are corresponding to $v,w\in V_A'$) in $S$ passing through $h$, then let $s(h_1)=v$ and $s(h_{-1})=w$, respectively.
		\end{itemize} 
		Then denote by $H_A':=s^{-1}(V_A')$, which is a subset of $H_A$. For each $h_i,f_j\in H_A'$ with $s(h_i)=s(f_j)=v$ in $V_A'$, define $\rho(h_i)=f_j$ if there is an arrow $\alpha$ from $h$ to $f$ in $Q$ and $\alpha\;|\;C_v$, where $C_v$ is the special cycle corresponding to $v$. 
		Let $H:=H_A\backslash H_A'$. Define $V_A=V_A'\cup H$ and for each $h_{-1}\in H$, define $s(h_{-1})=h_{-1}$ in $V_A$. Moreover, let $\m(h_{-1})=1$ for each $h_{-1}\in V_A$.
		By Definition \ref{def:ribbon-graph}, $\Gamma_A=(V_A,H_A,s,\iota,\rho)$ is a ribbon graph.
		
		Now reconsider the special set $S$ of basic cycles. For each arrow $\alpha\in Q_1$, if there exist two distinct basic cycles $C_i, C_j\in S$ such that $\alpha\mid C_i$ and $\alpha\mid C_j$, then we label the edge in $\Gamma_A$ corresponding to the vertex $s(\alpha)$. The set $L$ of labeled edges in $\Gamma_A$ consists of all such edges defined in this way.
		Denote by $B$ the sf Brauer graph algebra associated with $((\Gamma_A)_L,\m)$. Since the subpaths of basic cycles in $A$ naturally form a $k$-basis of $A$, by Lemma \ref{k-basis of proj}, $A\cong B$ as $k$-algebras.
	\end{proof}
	
	Therefore, combining Theorem \ref{ssqba quotient} and Theorem \ref{ssqb is sf-BGA}, we get the following corollary.
	
	\begin{corollary}
		Let $A=\qa$ be a finite-dimensional $k$-algebra with $k$ algebraically closed. If $A$ is special quasi-biserial, then $A$ is a quotient of an sf Brauer graph algebra.
	\end{corollary}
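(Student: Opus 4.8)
The plan is to chain together the two main structural results already established, namely Theorem~\ref{ssqba quotient} and Theorem~\ref{ssqb is sf-BGA}. First I would dispose of a reduction: any $A=\qa$ with $I$ admissible is automatically basic, and we may assume $A$ is indecomposable. Indeed, decomposing $Q$ into its connected components $Q_i$ and setting $I_i=I\cap kQ_i$ writes $A\cong\prod_i A_i$ with each $A_i=kQ_i/I_i$ again special quasi-biserial; a finite product of sf Brauer graph algebras is again an sf Brauer graph algebra once one allows the underlying ribbon graph to be the disjoint union of its connected components (or one simply argues component-wise), so it suffices to treat each $A_i$. Thus from now on $A$ is basic and indecomposable.

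Next I would invoke Theorem~\ref{ssqba quotient}: there is a symmetric special quasi-biserial algebra $A_s$ together with a surjective $k$-algebra homomorphism $A_s\twoheadrightarrow A$. Here I would check that $A_s$ satisfies the standing hypotheses of Section~\ref{sec:sym-sqb-4}, i.e.\ that it is basic and indecomposable: the construction in the proof of Theorem~\ref{ssqba quotient} produces $A_s=\qa_s$ with $Q_s$ obtained from $Q$ only by adjoining arrows joining vertices of $Q$, so $A_s$ is basic and its quiver stays connected, hence $A_s$ is indecomposable. (If $Q$ has only single arrows, $A$ is already a Brauer tree algebra, hence an sf Brauer graph algebra with empty set of labeled edges, and there is nothing to prove.)

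Finally I would apply Theorem~\ref{ssqb is sf-BGA} to $A_s$: since $k$ is algebraically closed and $A_s$ is a basic, indecomposable symmetric special quasi-biserial $k$-algebra, $A_s\cong kQ_{\Gamma,L}/I_{\Gamma,L}$ for a suitable sf Brauer graph $((\Gamma)_L,\m)$. Composing with $A_s\twoheadrightarrow A$ then exhibits $A$ as a quotient of the sf Brauer graph algebra $A_s$, which is exactly the assertion.

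I do not expect any genuine obstacle: the corollary is essentially a formal concatenation. The only points needing care are the two bookkeeping items flagged above — verifying that the $A_s$ output by Theorem~\ref{ssqba quotient} lies in the class to which Theorem~\ref{ssqb is sf-BGA} applies (basic and indecomposable, which is visible from the explicit construction), and handling the passage from a general $\qa$ to its indecomposable summands, where one either tolerates disconnected ribbon graphs in the definition of an sf Brauer graph or argues block by block. Neither requires new ideas.
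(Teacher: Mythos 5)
Your proposal is correct and follows exactly the paper's route: the corollary is obtained by combining Theorem~\ref{ssqba quotient} (every indecomposable special quasi-biserial algebra is a quotient of a symmetric special quasi-biserial algebra) with Theorem~\ref{ssqb is sf-BGA} (symmetric special quasi-biserial algebras are precisely the sf Brauer graph algebras). The extra bookkeeping you supply on reducing to the basic, indecomposable case and checking that $A_s$ lies in the scope of Theorem~\ref{ssqb is sf-BGA} is sound and simply fills in details the paper leaves implicit.
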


\end{document}